\documentclass[11pt]{amsart}
\usepackage{graphicx, amssymb, amsmath, amsthm}
\usepackage{epsfig}
\usepackage{hyperref}
\usepackage{mathrsfs}

\usepackage{tikz}
\usepackage{here}

\usepackage{pifont}

\usepackage{listings}
\numberwithin{equation}{section}
\setcounter{tocdepth}{4}
\usepackage{xcolor}

\newtheorem{theorem}{Theorem}[section]
\newtheorem{prop}[theorem]{Proposition}
\newtheorem{lemma}[theorem]{Lemma}

\newtheorem{corollary}[theorem]{Corollary}

\newtheorem{proposition}[theorem]{Proposition}
\newtheorem{example}[theorem]{Example}

\theoremstyle{definition}
\newtheorem{definition}[theorem]{Definition}
\newtheorem{remark}[theorem]{Remark}

\makeatletter
\newcommand{\Extend}[5]{\ext@arrow0099{\arrowfill@#1#2#3}{#4}{#5}}
\makeatother

\newcommand{\R}{\mathbb{R}}  
\newcommand{\Z}{\mathbb{Z}}  
\newcommand{\N}{\mathbb{N}}  
\newcommand{\CC}{\mathbb{C}}  
\newcommand{\II}{\mathrm{i}}
\newcommand{\p}{\partial}  
\newcommand{\pb}{\bar{\partial}}
\newcommand{\ud}{\mathrm{d}}

\newcommand{\Tr}{\mathrm{Tr}}

\newcommand{\bs}[1]{\boldsymbol{#1}}
\newcommand{\depp}[1]{\mathrm{depth}(#1)}

\begin{document}
\title[A Gluing Theorem For Collapsing WQAC-CY Manifolds]{A Gluing Theorem For Collapsing Warped-QAC Calabi-Yau Manifolds}

\author[Dashen Yan]{Dashen Yan}
\address{Department of Mathematics, Stony Brook University, Stony Brook, NY 11794-3651 USA. E-mail: dashen.yan@math.stonybrook.edu}

\maketitle
\begin{abstract}
	We carry out a gluing construction for collapsing warped-QAC (quasi-asymptotically-conical) Calabi-Yau manifolds in $\CC^{n+2}, n\geq 2$. This gluing theorem verifies a conjecture  by Yang Li in \cite{li2019gluing} on the behavior of the warped QAC Calabi-Yau metrics on affine quadrics when two singular fibers of a holomorphic fibration go apart. We will also discuss a bubble tree structure for those collapsing warped-QAC Calabi-Yau manifolds.
\end{abstract}

\section{Introduction and background}\label{S_1}
	The goal of this paper is to understand the structure of blow-up limits of collapse Calabi-Yau manifolds along holomorphic fibrations when several singular fibers collide.\\
	
	A holomorphic fibration from a compact Calabi-Yau manifold to some base space will generally contains singular fibers. In the region away from the singular fibers, the geometry of a compact Calabi-Yau manifold collapsing along holomorphic fibration has been previously studied elsewhere. For example, if the volume of the smooth fibers is normalized to a fixed constant, there is a semi-Ricci-flat description of the Calabi-Yau metric (e.g. \cite{tosatti2010adiabatic}). Indeed, in a small tubular neighborhood of $F$, this metric looks like a product of the Calabi-Yau metric on $F$ and a flat metric on the base space. By contrast, in the case of a collapsing Calabi-Yau 3-fold admitting a Lefschetz $K3$ fibration, a refined description was obtained by Yang Li in \cite{li2019new} via a gluing construction. Near the nodal singularities of the singular fibers, the metric is approximated by a scaled Calabi-Yau metric $(\CC^{3},\omega_{\CC^{3}})$ constructed in \cite{li2019new,szekelyhidi2019degenerations,conlon2017new}. The holomorphic fibration structure on $\CC^{3}$ relating to this collapsing problem is Lefschetz fibration
		\begin{equation*}
			\pi:\CC^{3}\to \CC,(z_{1},z_{2},z_{3})\mapsto z_{1}^{2}+z_{2}^{2}+z_{3}^{2}.
		\end{equation*}
	The generic fibers of $\pi$ are smoothings of $\CC^{2}/\Z_{2}$ and are mutually biholomorphic. Outside a large compact set, the Calabi-Yau metric $g_{\omega_{\CC^{3}}}$ is locally modeled on the product of a flat metric on $\CC$ and $|\pi(\vec{z})|^{1/2}g_{S}$, where $g_{S}$ is the Stenzel metric on the smooth fiber $z_{1}^{2}+z_{2}^{2}+z_{3}^{2}+1=0$.\\
	
 	Examples of complete Calabi-Yau manifolds similar to this example $(\CC^{3},\omega_{\CC^{3}})$ have been constructed in \cite{szekelyhidi2019degenerations,conlon2017new,chiu2022nonuniqueness,firester2024complete,conlon2023warped}. Those manifolds have maximal volume growth and a tangent cone at infinity with a singular cross section. Moreover, the underlying manifolds admit a holomorphic fibration to $\CC$ with generic fibers that are a smoothing of Calabi-Yau cones. 

 	 Outside a large compact set, the metrics are modeled on a warped product of the flat metric on the base space and an asymptotically conical Calabi-Yau metric on the fibers, which we use \textbf{warped QAC} (warped quasi-asymptotically conical) to denote this properties \footnote{The precise definition for warped QAC metric is given in Definition 2.9 in \cite{conlon2023warped}. We borrow the terminology in this paper.}.\\
 	
 	Such warped QAC Calabi-Yau metrics are candidates for blow-up limits of the collapsing Calabi-Yau manifold along holomorphic fibration near the critical points of the fibrations. Roughly speaking, one can think of a warped QAC metrics interpolating the geometry between the critical points, where the semi-Ricci-flat description breaks down, and the semi-Ricci-flat geometry away from the singular fibers, by resolving a conical singularity fiberwise.\\
 	
 	The starting point of this paper is the situation where, as the metric collapses, the singular fibers are allowed to collide. Depending on relative rate of colliding and collapsing, the blow-up limits fall into three categories: colliding is faster than, comparable to, or slower than collapsing. Some examples for the first two cases are considered by Conlon and Rochon in \cite{conlon2023warped}. Their approach requires the Laplacian to be an isomorphism between appropriate weighted H\"older spaces. In the first case, which generalizes the work of Hein and Sun on Calabi-Yau manifolds with isolated conical singularities \cite{hein2017calabi} to a non-compact setting, this condition of the Laplacian is needed to solve a singular Monge-Amp\`ere equation.\\
 
 	In present paper, we carry out a gluing construction for the blow-up limit in the case when colliding is slower than collapsing. At the outset, we provide a construction of warped QAC Calabi-Yau metrics, that extends a construction of Sz\'ekelyhidi's in \cite{szekelyhidi2019degenerations}. Let $X$ be a smooth affine hypersurface in $\CC^{n+2}$ defined by the following equation
 		\begin{equation}\label{S_1_Defn_X}
 			P(\tilde{z})+Q(z_{n+2})=0, (\tilde{z},z_{n+2})\in \CC^{n+2}, n\geq 2.
 		\end{equation}
 	Let $V_{0}\subset \CC^{n+1}$ defined by $P(\tilde{z})=0$, such that $V_{0}$ admits a Calabi-Yau cone metric and denote its K\"ahler form as $\omega_{0}=\II \p \pb r^{2}$. We assume that there is a weighted action on $\CC^{n+2}$ that preserves $V_{0}$
 		\begin{equation*}
 			a\cdot \tilde{z}=(a^{w_{1}}z_{1}, \cdots, a^{w_{n+1}}z_{n+1}).
 		\end{equation*}
 	The weight $w_{i}$ are normalized so that the action is a rescaling on $V_{0}$, in the sense that $r^{2}(a\cdot \tilde{z})=|a|^{2}r^{2}(\tilde{z})$. We also assume that the polynomial $P$ is homogeneous under the weighted action $P(a \cdot \tilde{z})=a^{p}P(\tilde{z}), a\in \CC^{*}$. We will call $p$ the degree of $P$, and also we will let $q$ to denote the degree of the polynomial $Q(z_{n+2})$. Now define the holomorphic fibration $\pi$ to be the projection to the $z_{n+2}$ factor:
		\begin{equation*}
			\pi: X\to \CC.
		\end{equation*}
	Our first main result is:
		\begin{theorem}\label{Thm_1}
			If the degree $p$ of $P$ is larger than the degree $q$ of $Q$, there exists a warped QAC Calabi-Yau K\"ahler form $\II \p \pb \phi$ on $X$ with tangent cone at infinity , which can be approximated by
				\begin{equation*}
					\pi^{*}\omega_{\CC}+\omega_{SRF},
				\end{equation*}
			outside a large compact set. Here $\omega_{SRF}$ denotes for a certain real $(1,1)$-form, which is Calabi-Yau when restricted on each fiber. 
		\end{theorem}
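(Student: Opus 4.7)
The plan is to follow a Tian--Yau--Sz\'ekelyhidi style construction: build an explicit approximate Calabi--Yau form using a fiberwise AC Calabi--Yau metric warped by the base coordinate, and then perturb it to an exact solution via weighted Monge--Amp\`ere analysis.

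First I would produce the approximate solution. The key observation is that since $p>q$, a smooth fiber $X_c = \{P(\tilde z)=-c\}$ of $\pi$ is, via the weighted $\CC^{*}$-action, a rescaling of the fixed smoothing $X_1 = \{P(\tilde z)=-1\}$ by factor $|c|^{1/p}$, which on the base grows like $|z_{n+2}|^{q/p}$, strictly slower than $|z_{n+2}|$. The Calabi--Yau cone structure on $V_0$ together with the Conlon--Hein / Tian--Yau theory (applicable because $n \ge 2$) gives an AC Calabi--Yau metric on $X_1$ with tangent cone $V_0$. Rescaling this fiber metric by $|Q(z_{n+2})|^{1/p}$ and varying smoothly in $z_{n+2}$ defines a semi--Ricci--flat $(1,1)$-form $\omega_{SRF}$ on the region where $Q(z_{n+2})\ne 0$ and $|\tilde z|$ is large; gluing to an interior model via a cutoff gives an approximate Kähler form $\omega_{app} := \pi^{*}\omega_{\CC} + \omega_{SRF}$ on all of $X$.

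Next I would set up the linear analysis. The geometry at infinity is warped QAC with two characteristic directions: the base is flat $\CC$ with Euclidean growth, while the fiber is an AC Calabi--Yau of rescaled size $|Q(z_{n+2})|^{1/p}$. Accordingly one needs weighted H\"older spaces $C^{k,\alpha}_{\delta,\tau}$ with two weights---one in the fiber direction, one in the base direction---adapted to the warping. The task is to show $\Delta_{\omega_{app}}$ is an isomorphism between appropriate such spaces. On each fiber this is the AC Laplacian with its standard indicial analysis on $V_0$; on the base it is essentially the Laplacian on $\CC$. The global Fredholm theory is obtained by a scaling/gluing argument: localize on annuli $|z_{n+2}|\sim R$, rescale by the fiber size $R^{q/p}$, reduce to the model Laplacians, and patch with a partition of unity.

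With the Laplacian inverted, solving $(\omega_{app}+\II\p\pb u)^{n+1} = \Omega_X\wedge\bar\Omega_X$ reduces to estimating the error $\omega_{app}^{n+1} - \Omega_X\wedge\bar\Omega_X$ in the weighted norm; the condition $p>q$ is precisely what makes this error decay at a polynomial rate strong enough to run either the inverse function theorem or a continuity method in the weighted Banach setting, producing a smooth Calabi--Yau potential $\phi$ with the claimed asymptotics. The main obstacle I expect is the analytic setup: choosing the two weights to simultaneously avoid the indicial roots of the fiberwise AC Laplacian and the base Laplacian, and obtaining uniform estimates across the patching scale $|Q(z_{n+2})|^{1/p}$ where the fiber geometry transitions from a smoothing to its conical tangent. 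A secondary point is to verify that the fiberwise AC Calabi--Yau metric can be chosen to depend smoothly and equivariantly on the base parameter $z_{n+2}$, so that $\omega_{SRF}$ is globally defined rather than only fiber-by-fiber.
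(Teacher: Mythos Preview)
Your outline captures the right ingredients—the fiberwise AC Calabi--Yau potential pulled back via the weighted action, the double-weighted H\"older spaces, and the role of $p>q$ in making the Ricci potential decay—but your route from approximate to exact solution differs from the paper's in a way worth noting. You propose showing $\Delta_{\omega_{app}}$ is a global \emph{isomorphism} between weighted spaces and then running an inverse function theorem or continuity method. The paper deliberately avoids this: it only constructs a bounded \emph{right inverse} for the Laplacian on the exterior region $\{\rho>A\}$ (by patching the inverses on the models $\CC\times V_0$ and $\CC\times V_1$), uses a fixed-point argument there to produce a potential $\phi''$ which is exactly Calabi--Yau outside a compact set, and then handles the remaining compactly supported Monge--Amp\`ere problem by Hein's method---taking limits of solutions $u_\epsilon$ to the $\epsilon$-regularized equations, with uniform bounds coming from weighted Moser iteration on an $SOB(2n+2)$ space. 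Global surjectivity of the Laplacian is established only \emph{after} the Calabi--Yau metric exists. Your approach is essentially the Conlon--Rochon strategy; the paper's two-step route sidesteps the global Fredholm analysis and, as Remark~\ref{S_1_Remark_Example_1} points out, this is exactly what lets it reach examples (e.g.\ $X_{k,l}$ with $l$ close to $2k$) where the isomorphism condition is not known.

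On your secondary point: no family version of Conlon--Hein is needed. One fixes a single AC potential $\varphi$ on $V_1=\{P=-1\}$ and defines the semi-Ricci-flat potential globally as $|P(\tilde z)|^{2/p}\varphi(P(\tilde z)^{-1/p}\cdot\tilde z)$ via the weighted $\CC^*$-action, so smooth dependence on the base is automatic.
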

		\begin{remark}\label{S_1_Remark_1}
			This Calabi-Yau K\"ahler form is unique in the sense that, if two K\"ahler potentials are in a certain asymptotic class, and solve the same equation, then they at most differ by a pluriharmonic function, with slower-than-quadratic growth.
		\end{remark}
		\begin{remark}\label{S_1_Remark_Example_1}
			In this construction, the Laplacian admitting a bounded right inverse outside a large compact set is suffice for our purpose. As a consequence, we are able to produce warped QAC Calabi-Yau manifolds which are not covered in \cite{conlon2023warped}. For example
				\begin{equation*}
					X_{k,l}:z_{1}^{2}+z_{2}^{2}+z_{3}^{k}+z_{4}^{l}=1,2k>l,
				\end{equation*}
			where the normalized weighted action is
				\begin{equation*}
					a\cdot (z_{1},z_{2},z_{3})=(a^{k}z_{1},a^{k} z_{2},a^{2}z_{3}).
				\end{equation*}
			In this case $p=2k>l=q$, and therefore our theorem applies. On the other hand, this example corresponds to case $N=1$, $\beta=\max\{6,2k\}$, $m_{1}=3$ and $v=\frac{l}{2k}$ in \cite{conlon2023warped}. When $l$ is sufficiently closed to $2k$, for example $k=2024, l=4047$, $X_{k,l}$ violates the numerical constraint in Theorem 1.4 therein.
		\end{remark}
		\begin{remark}\label{S_1_Remark_Example_2}
			In a slightly different vein, let $z_{3}$, instead of $z_{4}$, be the holomorphic fibration to $\CC$ and the Calabi-Yau cone $V_{0}'$ be given by $z_{1}^{2}+z_{2}^{2}+z_{4}^{l}=0$. When $k\neq l$, if we require further that $2l>k$, then there are a different kind of warped QAC Calabi-Yau metrics on $X_{k,l}$. On the other hand, Theorem 8.1 in \cite{collins2019sasaki} shows that
				\begin{equation*}
					z_{1}^{2}+z_{2}^{2}+z_{3}^{k}+z_{4}^{l}=0, 2l>k, 2k>l
				\end{equation*}
			admits a Calabi-Yau cone structure. Therefore there is an AC (asymptotically conical) Calabi-Yau metric on its smoothing $X_{k,l}$. To conclude, there are three different kinds of Calabi-Yau metrics on $X_{k,l}$:
				\begin{enumerate}
					\item Warped QAC, with $\CC\times \CC^{2}/\Z_{k}$ as tangent cone at infinity;
					\item Warped QAC, with $\CC\times \CC^{2}/\Z_{l}$ as tangent cone at infinity;
					\item AC, with $\text{Cone}\big(\sharp^{\gcd(k,l)-1}S^{2}\times S^{3}\big)$ as tangent cone at infinity.
				\end{enumerate}
		\end{remark}
	
	A key point to our construction in Theorem \ref{Thm_1} is that we optimize the algorithm in \cite{szekelyhidi2019degenerations} to give an optimal control of the Ricci potential for the approximate metric $\pi^{*}\omega_{\CC}+\omega_{SRF}$, which is crucial in the gluing theorem.\\
		\begin{example}\label{S_1_Example_1}
			To see the comparison of collapsing and colliding explicitly, consider the affine quadrics in $\CC^{n+2}$. Given positive real numbers $t,x$, define $P,Q$ by
				\begin{equation*}
					\begin{split}
						P(\tilde{z})&=z_{1}^{2}+\cdots+ z_{n+1}^{2},\\
						Q(z_{n+2})&=z_{n+2}^{2}-t^{2x}, x,t\in \R^{+}.
					\end{split}
				\end{equation*}
			Thus $P+Q=0$ is an affine quadric in $\CC^{n+2}$. The relevant weighted action of $\CC^{*}$ on $\CC^{n+2}$ is taken to be
				\begin{equation*}
					a\cdot (\tilde{z},z_{n+2})=(a^{\frac{n}{n-1}}z_{1},\cdots ,a^{\frac{n}{n-1}}z_{n+1},az_{n+2}),
				\end{equation*}
			and the degree $p$ is $\frac{2n}{n-1}$. Let $\varphi$ be a K\"ahler potential for the Stenzel metric on the standard affine quadric (Page 21, Example 3, \cite{conlon2013asymptotically}). Then there is a unique Calabi-Yau K\"ahler form $\II \p \pb\phi_{x,y}$ approximated by $\pi^{*}\omega_{\CC}+\omega_{SRF}$
				\begin{equation*}
					\II \p \pb \big(|z_{n+2}|^{2}+t^{-2y}|P(\tilde{z})|^{2/p}\varphi(P(\tilde{z})^{-1/p}\cdot \tilde{z})\big), y\in \R^{+}.
				\end{equation*}
			Here, $x$ measures the rate of colliding and $y$ measures the rate of collapsing. After the coordinate change
				\begin{equation*}
					\tilde{v}=t^{-2x/p}\cdot \tilde{z}, v_{n+2}=t^{-x}z_{n+2},
				\end{equation*}
			the defining equation becomes
				\begin{equation*}
					v_{1}^{2}+\cdots+v_{n+1}^{2}+v_{n+2}^{2}=1,
				\end{equation*}
			and the approximate K\"ahler form is
				\begin{equation*}
					t^{-2x}\II \p \pb \big(|v_{n+2}|^{2}+t^{2(\frac{p+1}{p}x-y)}|P(\tilde{v})|^{2/p}\varphi(P(\tilde{v})^{-1/p}\cdot \tilde{v})\big).
				\end{equation*}
			The sign of $\frac{p+1}{p}x-y$ measures the ratio of the rates of collapsing and colliding. If it is greater than, equal to, or smaller than $0$, it corresponds to the situation where collapsing is slower than, comparable to, or faster than colliding, respectively.
		\end{example}
		\begin{remark}\label{S_1_Remark_NonCollapse}
			Throughout this paper, the term "collapsing" refers to scaling down the metric in the fiber direction. In contrast, the volume of unit balls always have a positive lower bound. In fact, this one-parameter family of Calabi-Yau manifolds is non-collapsing in the usual sense.
		\end{remark}
	
	We can put the above discussion into a more general setting. Let $t\to \infty$ be the collapsing parameter, and suppose the singular locus of the holomorphic fibration converges to $d$ distinct points $\{s_{1},\cdots ,s_{d}\}$ on the base space. Consider a family of hypersurfaces
		\begin{equation}\label{S_1_Eqn_Collapsing_1}
			X_{t}': P(\tilde{z})+\prod_{i=1}^{d}\prod_{j=1}^{q_{i}}(z_{n+2}-s_{i}-a_{t}^{-\theta_{i}}s_{i,j})=0.
		\end{equation}
	Here $\sum q_{i}=q$ and $a_{t}\in \CC, |a_{t}|=t$. The factors $\theta_{i}=\frac{p-q}{p-q_{i}}>0$ are chosen so that in small neighborhood of critical points $S_{i}=(0,s_{i})$, $X_{t}'$ is biholomorphic to:
		\begin{equation*}
			X_{i}: P(\tilde{z})+Q_{i}(z_{n+2})=0,
		\end{equation*}	
	where $Q_{i}=\prod_{l\neq i}(s_{i}-s_{l})^{q_{l}}\prod_{j=1}^{q_{i}}(z_{n+2}-s_{i,j})$. Assume for simplicity that:
		\begin{equation*}
			q_{1}\leq \cdots \leq q_{d}.
		\end{equation*}
	Our second main result is:
		\begin{theorem}\label{Thm_2}
			Suppose $p>2q_{i}$ and if $q_{1}<\frac{1}{4}p<q_{d}$ suppose further that $\frac{p+4q_{1}}{5p-4q_{1}}>\frac{2q_{d}}{3p-2q_{d}}$. Then, when the collapsing parameter $t$ is sufficiently large, the unique Calabi-Yau K\"ahler form $\II \p \pb \phi_{t}$ on $X_{t}'$, which is approximated by
				\begin{equation*}
					\pi^{*}\omega_{\CC}+t^{2(q-p)/p}\omega_{SRF},
				\end{equation*}
			in the sense of Theorem \ref{Thm_1}, can be obtained by a gluing construction. In particular, there is a family of small neighborhoods $U_{i,t}$ around $S_{i}\in \CC^{n+2}$ together with a family of biholomorphism 
				\begin{equation*}
					F_{i,t}: U_{i,t}\cap X_{t}' \to X_{i},
				\end{equation*}
			whose image exhausts $X_{i}$ and such that scaled Calabi-Yau K\"ahler forms $(F_{i,t}^{-1})^{*}(t^{2\theta_{i}}\II \p \pb \phi_{t})$ converge to the warped QAC Calabi-Yau K\"ahler forms on $X_{i}$ constructed in Theorem \ref{Thm_1}.
		\end{theorem}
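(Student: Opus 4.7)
The plan is a cut-and-paste perturbation argument. I would assemble an approximate K\"ahler potential $\phi_{t}^{app}$ on $X_{t}'$ that equals the pullback under $F_{i,t}$ of $t^{-2\theta_{i}}\phi_{X_{i}}$ near each cluster point $S_{i}$, with $\phi_{X_{i}}$ the warped QAC K\"ahler potential on $X_{i}$ supplied by Theorem \ref{Thm_1}, and equals $|z_{n+2}|^{2}+t^{2(q-p)/p}\varphi_{SRF}$ elsewhere, interpolated by cutoffs in overlap annuli. A small correction $u$ is then produced by solving a Monge-Amp\`ere equation via an implicit function theorem on weighted H\"older spaces; the uniqueness statement in Remark \ref{S_1_Remark_1} identifies $\phi_{t}:=\phi_{t}^{app}+u$ with the Calabi-Yau potential of Theorem \ref{Thm_1} on $X_{t}'$, and the desired rescaled convergence $(F_{i,t}^{-1})^{*}(t^{2\theta_{i}}\II\p\pb\phi_{t})\to \II\p\pb\phi_{X_{i}}$ is equivalent to smallness of $u$ in a suitable weighted norm.

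The maps $F_{i,t}$ come from the weighted rescaling $w_{n+2}=a_{t}^{\theta_{i}}(z_{n+2}-s_{i})$ and $\tilde{z}=a_{t}^{-q_{i}\theta_{i}/p}\cdot\tilde{w}$ (the second via the weighted $\CC^{*}$-action on $\tilde{z}$). Using the identity $p-q=(p-q_{i})\theta_{i}$, a direct computation shows that this transforms the equation of $X_{t}'$ into that of $X_{i}$ up to an error of order $t^{-\min_{l}\theta_{l}}$ coming from the factors $(z_{n+2}-s_{l}-a_{t}^{-\theta_{l}}s_{l,j'})^{q_{l}}$ with $l\neq i$ evaluated near $s_{i}$, and at the same time rescales $\pi^{*}\omega_{\CC}+t^{2(q-p)/p}\omega_{SRF}$ into $t^{-2\theta_{i}}$ times the warped QAC asymptotic model on $X_{i}$. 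Since by Theorem \ref{Thm_1} the Calabi-Yau form $\II\p\pb\phi_{X_{i}}$ has this same asymptote, the cutoff interpolation produces a $\phi_{t}^{app}$ whose leading terms match. The next step is to estimate the Ricci potential $f_{t}=\log\frac{(\II\p\pb\phi_{t}^{app})^{n+1}}{\Omega_{X_{t}'}\wedge\overline{\Omega}_{X_{t}'}}$ in $t$-weighted H\"older norms adapted to the bubble-tree geometry: inside each $U_{i,t}$ the estimate reduces via $F_{i,t}$ to the Ricci potential of $\phi_{X_{i}}$ at the gluing radius, controlled by Theorem \ref{Thm_1}; in the semi-Ricci-flat region it is controlled by Tosatti-type expansions \cite{tosatti2010adiabatic}; in the transition annulus the cutoff error is dominated by the common asymptote. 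The sharp decay that Theorem \ref{Thm_1} provides for the Ricci potential of the warped QAC approximation (emphasized by the author as the improvement over \cite{szekelyhidi2019degenerations}) is exactly what makes each of these errors smaller than the operator norm of the Laplacian inverse in the next step.

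The main obstacle is building a right inverse $G_{t}$ for $\Delta_{\omega_{t}^{app}}$ between weighted H\"older spaces whose operator norm is bounded uniformly in $t$. On each bubble $X_{i}$, Theorem \ref{Thm_1} supplies such an inverse outside a large compact set; in the semi-Ricci-flat region a bounded inverse is obtained by combining the fiberwise Laplacian of $\omega_{SRF}$ with the base Laplacian on $\CC$, following \cite{conlon2023warped}. A partition of unity at the overlap scale patches these local inverses into a parametrix and a Neumann series corrects the remainder. Convergence of the series requires the decay weight at infinity in the slowest-decaying bubble (indexed by $q_{1}$) to dominate the rate at which the error from the fastest-decaying bubble (indexed by $q_{d}$) is generated; tracking the exponents $\theta_{i}=(p-q)/(p-q_{i})$ together with the asymptotic orders from Theorem \ref{Thm_1} reduces this compatibility to the inequality $\frac{p+4q_{1}}{5p-4q_{1}}>\frac{2q_{d}}{3p-2q_{d}}$, precisely in the intermediate regime $q_{1}<\frac{1}{4}p<q_{d}$ in which it is not automatic (and which is why the hypothesis is stated conditionally). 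With $G_{t}$ in hand, the Monge-Amp\`ere equation $(\omega_{t}^{app}+\II\p\pb u)^{n+1}=e^{-f_{t}}\Omega_{X_{t}'}\wedge\overline{\Omega}_{X_{t}'}$ is solved by a Banach fixed point on $u\mapsto G_{t}(-f_{t}+\mathcal{N}(\II\p\pb u))$ with $\mathcal{N}$ the quadratic remainder; smallness of $f_{t}$ yields smallness of $u$, completing both the gluing and the claimed rescaled convergence.
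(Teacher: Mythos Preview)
Your overall architecture matches the paper's: define the approximate potential \eqref{S_3_Eqn_ApproxSol} by cutting off between $F_{i,t}^{*}(t^{-2\theta_i}\phi_i)$ and the semi-Ricci-flat model, estimate the Ricci potential in $t$-dependent weighted H\"older spaces, patch model Green's operators into a parametrix, and close with a Banach fixed point.

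Two points deserve correction. First, your account of where the numerical constraint $\frac{p+4q_{1}}{5p-4q_{1}}>\frac{2q_{d}}{3p-2q_{d}}$ comes from is off. It does \emph{not} arise from convergence of the parametrix Neumann series; that part works without any extra hypothesis once the model comparisons (Lemmas \ref{S_3_Lemma_Met_1}--\ref{S_3_Lemma_RicciMet_4}) are in place. The constraint comes entirely from Proposition \ref{S_3_Prop_Ricci}: one must choose gluing radii $\sigma_i\in(\tfrac{q-p}{p},0)$ and an exponent $a$ so that the Ricci potential is $\lesssim t^{-(2-\tau)(p-q)/p-a}$ in $C^{0,\alpha}_{\bs{\delta-2},\tau-2,t}$ \emph{and} so that this weighted smallness implies smallness in the scale-invariant norm $C^{0,\alpha}_{\bs{0},0,t}$. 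These two requirements pull in opposite directions (the biholomorphism error $h_{i,2}$ grows like $\rho_i'$, while the model error $h_{i,1}$ decays), and the resulting system of inequalities \eqref{S_3_Ineqn_Parameter_3} forces $a$ to lie in an intersection $\bigcap_i I_i$ of intervals indexed by the bubbles. Nonemptiness of that intersection is exactly the stated inequality when $q_1<\tfrac{p}{4}<q_d$.

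Second, the paper flags a technical obstruction you pass over: near the bubbles the comparison $t^{2\theta_i}g_{\phi_t'}-g_{\phi_i}$ is only controlled in $C^{0,\alpha}_{0,0}$ (Lemma \ref{S_3_Lemma_Met_2}), not $C^{2,\alpha}$, so the two $C^{2,\alpha}$ weighted spaces are not uniformly equivalent and one cannot literally build a right inverse $\Delta_{g_{\phi_t'}}^{-1}:C^{0,\alpha}\to C^{2,\alpha}$. The workaround is to let the parametrix $P$ land in a direct sum $C^{2,\alpha}_{\text{model},\bs{\delta},\tau,t}$ of model H\"older spaces, observe that $\II\p\pb P$ and $\Delta_{g_{\phi_t'}}P$ are nonetheless bounded back into $C^{0,\alpha}_{\bs{\delta-2},\tau-2,t}$ (because the K\"ahler Laplacians differ only at zeroth order), and run the fixed point on $\mathscr{R}f=\II\p\pb P(\Delta P)^{-1}f$ as in \eqref{S_4_Defn_R}. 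Your proposal implicitly assumes a direct $C^{2,\alpha}$ inverse, which would need justification.
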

		\begin{remark}\label{S_1_Remark}
			When there is only one singular fiber, collapsing typically amounts to rescaling. To see this, let $X_{t}'\simeq \CC^{n+1}$ be given by
				\begin{equation*}
					P(\tilde{z})+z_{n+2}=0,
				\end{equation*}
			and consider two coordinates related by the scaling
				\begin{equation*}
					(\tilde{z},z_{n+2})=(t^{1/p}\cdot \tilde{v},tv_{n+2}).
				\end{equation*}
				The approximate solution $\pi^{*}\omega_{\CC}+t^{2(1-p)/p}\omega_{SRF}$ in the coordinate $(\tilde{z},z_{n+2})$, can be represented as $t^{2}(\pi^{*}\omega_{\CC}+\omega_{SRF})$ in the coordinate $(\tilde{v},v_{n+2})$.
		\end{remark}
		\begin{example}\label{S_1_Example_2}
			This is a continuation for the Example \ref{S_1_Example_1}. Let $x=0, y=\frac{1}{n}$, and in this case, collapsing is faster than colliding. Then when $t$ is sufficiently large, in small neighborhoods of $S_{1}=(0,1), S_{2}=(0,-1)$, the metrics are modeled on $(\CC^{n+1},t^{-\frac{2}{n+1}}\omega_{\CC^{n+1}})$, where $\omega_{\CC^{n+1}}$ is the unique warped QAC Calabi-Yau K\"ahler form having tangent cone $\CC\times A_{1}$ at infinity.
		\end{example}
		\begin{remark}\label{S_1_Remark_pGH}
			Using this gluing construction, we are able to produce a sequence of non-collapse (in the usual sense) warped QAC Calabi-Yau manifolds $(M_{i},\omega_{i})$. With an appropriate choice of marked points $p_{i}$ and rescaling factors $\lambda_{i}$, the pointed sequence
				\begin{equation*}
					(M_{i},p_{i},\lambda_{i} \omega_{i}),
				\end{equation*}
				converges in pointed-Gromov-Hausdorff sense to a smooth pointed warped QAC Calabi-Yau manifold
					\begin{equation*}
						(M_{\infty}, p_{\infty},\omega_{\infty}),
					\end{equation*}
				whose tangent cone at infinity is $\CC\times V_{0}$. The limit $(M_{\infty},p_{\infty},\omega_{\infty})$ is referred to as a minimal bubble in \cite{sun2023bubbling}, and its geometry depends sensitively on the choice of $p_{i},\lambda_{i}$ (See Proposition \ref{S_5_Prop_Bubble_Convergence} for more detailed discussion on bubble tree convergence).
				\end{remark}
		
	Our ansatz for the collapsing metrics in many respect, is simpler than the one described by Yang Li in \cite{li2019gluing}, since, outside a compact set, the metric on each fiber is already Calabi-Yau, and in a certain fixed compact region, the behavior of the metric is very similar. In this compact region we have a semi-Ricci-flat description away from the singular fiber. On the other hand near the critical points, we will need to use a rather coarse function spaces to handle the error introduced by the biholomorphisms.\\
	
	However, the difference in the case considered here with the case in a compact manifold is that we will need to understand the geometry of the collapsing at infinity, which will not happen in the compact case. As a toy model, consider a sufficiently large sphere in $(\CC\times V_{1}, g_{\CC}\oplus t^{-2}g_{V_{1}})$, where $(V_{1},g_{V_{1}})$ is asymptotic to a cone $(V_{0},g_{V_{0}})$. As $t$ goes to infinity, the sphere will develop a circle family of singularities modeled on $V_{0}$. The key feature of our construction is that the weighted H\"older spaces are chosen to provide good control of Green's operator of various model geometries.
	
	\subsection*{Guide to the proof}
		\begin{enumerate}
			\item In Section \ref{S_2} we modify Sz\'ekelyhidi's approach in \cite{szekelyhidi2019degenerations} and give a simple proof of the existence of the Calabi-Yau metrics on the affine hypersurface $P(\tilde{z})+Q(z_{n+2})=0$. In Proposition \ref{S_2_Prop_Ricci_1}, we use a different approach to compute the Ricci potential of the approximate solution, obtaining an optimal estimate for its decay rate. Proposition \ref{S_2_Prop_Map_1}, \ref{S_2_Prop_Map_2}, \ref{S_2_Prop_Map_3} and \ref{S_2_Prop_Map_4} establish the elliptic theory for the Laplacian for each model geometry.
			\item Section \ref{S_3} is devoted to analyzing the geometry of our approximate model for the collapsing problem. We define the $t$-dependent weighted H\"older spaces $C^{k,\alpha}_{\bs{\delta},\tau,t}$, in terms of the weighted functions adapted to the collapsing problem. Section \ref{S_3_2} estimates the Ricci potential in terms of weighted H\"older norm, and compare the approximate metrics with every relevant model spaces at different scale (see Lemma \ref{S_3_Lemma_Met_1}, \ref{S_3_Lemma_Met_2}, \ref{S_3_Lemma_Met_3} and \ref{S_3_Lemma_RicciMet_4}). The technical part of this section is the proof of Proposition \ref{S_3_Prop_Ricci}, which compares $C^{0,\alpha}_{\bs{\delta-2},\tau-2,t}$ with $C^{0,\alpha}_{\bs{0},0,t}$. Assuming $p,q_{i}$ are constraint as in Theorem \ref{Thm_2}, we are able to show that the error between the approximate solution and the Calabi-Yau metric obtained by gluing construction is small in a usual sense.
			\item In Section \ref{S_4}, we construct a "paramatrix" $P$ with domain $C^{0,\alpha}_{\bs{\delta-2},\tau-2,t}$ to a direct sum of H\"older spaces $C^{2,\alpha}_{\text{model, }\bs{\delta},\tau,t}$, by gluing the Green's operators of each scaled model spaces. The difference between $\Delta P$ and $Id$ is small as shown in Proposition \ref{S_4_Prop_RI}. In the end, we prove the gluing theorem in \ref{S_4_Thm_Gluing} using Banach fixed point theorem.
			\item In Section \ref{S_5} we discuss a bubble tree structure from this gluing construction.
		\end{enumerate}
	\subsection*{Notation}
		Throughout this paper the notation $a\lesssim b$ is used to indicate $a<C b$ for some uniform constant $C$.

\section{Construction and properties of Warped QAC-CY metric}\label{S_2}
	
	Recall the hypersurface $X$ is given by
		\begin{equation*}
			P(\tilde{z})+Q(z_{n+2})=0.
		\end{equation*}
	The key to our construction is that, every smooth fiber of the holomorphic fibration is biholomorphism to $V_{1}\subset \CC^{n+1}$, where
		\begin{equation*}
			V_{1}: P(\tilde{z})+1=0.
		\end{equation*}
	The $(1,1)$-form $\omega_{SRF}$ is obtained by pulling back a certain Calabi-Yau K\"ahler form on $V_{1}$. In \cite{conlon2013asymptotically} Conlon and Hein proved the existence of Calabi-Yau K\"ahler $\II \p \pb \varphi$, such that $\varphi$ is asymptotic to $r^{2}$ on $V_{0}$ at large distance. There are two ways to justify this comparison. The first one is to extend $r^{2}$ to a neighborhood of $V_{0}$ by the homogeneity of $r^{2}$ under the weighted action; the second one is to construct a map from $V_{1}$ to $V_{0}$: Let $G_{0}$ be the geodesy projection from $\CC^{n+1}$ to $V_{0}$ under a singular metric $\II \p \pb R^{2}$. Here $R$ equals to $1$ on the unit sphere in $\CC^{n+1}$, and is homogenous under weighted action. Outside a large compact set in $V_{1}$, $G_{0}$ is a diffeomorphism into an open set in $V_{0}$. There is a positive constant $c$ such that
		\begin{equation*}
			|\nabla^{k}(\varphi-G_{0}^{*}(r^{2}))|_{g_{\varphi}}<C_{k}r^{-c-k}.
		\end{equation*}
	When $P(\tilde{z})\neq 0$, choose a branch of $P(\tilde{z})^{-1/p}$ and define
		\begin{equation*}
			\omega_{SRF}=\II \p \pb |P(\tilde{z})|^{2/p}\varphi(P(\tilde{z})^{-1/p}\cdot \tilde{z})
		\end{equation*}
	This definition is independent on the choice of the branch. Although 
		\begin{equation*}
			|P(\tilde{z})|^{2/p}\varphi(P(\tilde{z})^{-1/p}\cdot \tilde{z}),
		\end{equation*}
	is not defined when $P(\tilde{z})=0$, we can extend it smoothly to $r^{2}$ outside a large compact set. We will use $|P(\tilde{z})|^{\frac{2}{p}}\varphi(P(\tilde{z})^{\frac{-1}{p}}\cdot \tilde{z})$ to denote this extension unambiguously.\\

	Let $\phi'$ be the K\"ahler potential defined as follow
		\begin{equation}\label{S_2_Defn_AppSol}
			\phi'=\widetilde{max}(C(1+|z|^{2})^{a},|z_{n+2}|^{2}+|P(\tilde{z})|^{2/p}\varphi(P(\tilde{z})^{-1/p}\cdot \tilde{z})).
		\end{equation}
	Here, $\widetilde{\max}$ is the Demailly regularized maximum in \cite{demailly1997complex}. We choose $C$ large enough and $a<w_{\min}$. One can check that $\phi'$ equals to $C(1+|z|^{2})^{a}$ in a compact set containing the critical points and to $|z_{n+2}|^{2}+|P(\tilde{z})|^{2/p}\varphi(P(\tilde{z})^{-1/p}\cdot \tilde{z}))$ outside a large compact set.\\
	
	 To state the result of the existence and the asymptotic behavior of the warped QAC Calabi-Yau metric in Theorem \ref{Thm_1}, we will need to define the weighted H\"older space as in \cite{szekelyhidi2019degenerations}. We first define the weight functions, which depend only on the distance functions in base space and fiber direction. We consider $R$ as the fiberwise distance, since $R$ and $r$ are uniformly equivalent. Let $\rho^{2}=|z_{n+2}|^{2}+R^{2}$ be an approximation for the distance function. Define $w$ to be a smooth function satisfying the following properties
			\begin{equation}\label{S_2_Defn_W}
				w=
					\begin{cases}
						1, & R>2\kappa\rho\\
						\frac{R}{\kappa\rho}, & 2\kappa^{-1}\rho^{q/p}<R<\kappa \rho\\
						\kappa^{-2}\rho^{q/p-1}, & R<\kappa^{-1}\rho^{q/p}
					\end{cases}.
			\end{equation}
		Let $A'$ be a sufficiently large constant, and define
			\begin{equation}\label{S_2_Defn_Holder_1}
				\|T\|_{C^{k,\alpha}_{\delta,\tau}(X)}=\|T\|_{C^{k,\alpha}(\{\rho<2A'\})}+\|T\|_{C^{k,\alpha}_{\delta,\tau}(\{\rho>A'\})},
			\end{equation}
		where
			\begin{equation}\label{S_2_Defn_Holder_2}
				\begin{split}
					&\|T\|_{C^{k,\alpha}_{\delta,\tau}}=\sum_{j=1}^{k}\sup_{\rho>A'}\rho^{-\delta+k}w^{-\tau+k}|\nabla^{k}T|+[\rho^{-\delta+k}w^{-\tau+k}\nabla^{k}T]_{0,\alpha},\\
					&[T]_{0,\alpha}=\sup_{\rho(z)>A'}\rho(z)^{\alpha}w(z)^{\alpha}\sup_{z'\in B_{c}(z), z'\neq z}\frac{|T(z')-T(z)|}{d(z',z)^{\alpha}}.
				\end{split}
			\end{equation}
			\begin{remark}
				These weight functions are defined based on the geometry of the space. In the region $R>\Lambda^{3/4}\rho^{q/p}$ the metric is approximated by $\CC\times V_{0}$, and is approximated by $|z_{0}^{i}|^{q/p}g_{\CC\times V_{1}}$ in the region $\{|z_{n+2}-z_{0}^{i}|<3B|z_{0}^{i}|^{q/p}, R<2\Lambda^{2}\rho^{q/p},\rho>A\}$, (See Proposition \ref{S_2_Prop_Met_1}). $\CC\times V_{0}$ is a conical space admitting a singular link $L$. Roughly speaking we can use polar coordinate to write down the Laplacian
				\begin{equation*}
					\Delta_{\CC\times V_{0}}=\p_{\rho}^{2}+\frac{2n+1}{\rho}\p_{\rho}+\frac{1}{\rho^{2}}\Delta_{L}.
				\end{equation*}
			Moreover $L$ has a circle $S^{1}$ of singularities modeled on cone $V_{0}$. The weight function $w$ is equivalent to the distance function to the $S^{1}$, and we can apply \cite{rafe1991elliptic} to obtain a weighted Fredholm theory for the Laplacian operator $\Delta_{L}$. Moreover we can define the double weighted H\"older space $C^{k,\alpha}_{\delta,\tau}(\CC\times V_{0})$ similar to \ref{S_2_Defn_Holder_3}. The only difference is that we now replace $w$ by $w'$
				\begin{equation*}
					w'=
						\begin{cases}
							1, & R>2\kappa\rho\\
							\frac{R}{\kappa\rho}, & R<\kappa\rho
						\end{cases}.
				\end{equation*}
			\end{remark}

		Meanwhile the product $\rho w$ is a scaled distance function on the fibers, and it fits into the elliptic theory for $\CC\times V_{1}$. To be exact, $\rho w=\kappa^{-1}\xi$, where $\xi$ is a smoothing of $\max(R_{\tilde{z}},\kappa^{-1})$ on $V_{1}\subset \CC^{n+1}$. Define the weighted H\"older space $C^{k,\alpha}_{\tau}$ on $\CC\times V_{1}$ accordingly by
				\begin{equation}\label{S_2_Defn_Holder_3}
					\|T\|_{C^{k,\alpha}_{\tau}(\CC\times V_{1})}=\|\xi^{-\tau}T\|_{C^{k,\alpha}(\xi^{-2}g_{\CC\times V_{1}})}.
				\end{equation}  
		In the region $\{|z_{n+2}-z_{0}^{i}|<3B|z_{0}^{i}|^{q/p}, R<2\Lambda^{2}\rho^{q/p},\rho>A\}$ the weighted H\"older space coincides with the H\"older space $C^{k,\alpha}_{\delta,\tau}(X)$ up to a scaling (See Proposition \ref{S_2_Prop_Map_2}). Moreover we can define $C^{k,\alpha}_{\tau}(\CC\times V_{0})$ similarly by replacing $\xi$ by $R_{\tilde{z}}$.\\
			
		The precise statement for Theorem \ref{Thm_1} is:
			\begin{theorem}\label{Thm_1_Detail}
				Suppose $\delta\in (\frac{2(2q-p)}{p}, \frac{2q}{p})$ and $\tau$ is sufficiently closed to $0$, then $X$ admits a Calabi-Yau K\"ahler form $\II \p \pb \phi$, such that the difference between the corresponding K\"ahler potential $\phi$ and the approximate solution $\phi'$ in \ref{S_2_Defn_AppSol} satisfies
					\begin{equation}\label{S_2_Property_Diff}
						\varphi-\varphi' \in C^{2,\alpha}_{\delta,\tau}(X).
					\end{equation}
				Moreover $\phi$ is unique in the sense that if
					\begin{equation*}
						(\II \p \pb\phi)^{n+1}=(\II\p \pb (\phi+u))^{n+1},
					\end{equation*} 
				and $u\in C^{2,\alpha}_{\delta',\tau}(X)$ for some $\delta'<\frac{2q}{p}$ , then $u$ must be pluriharmonic.
			\end{theorem}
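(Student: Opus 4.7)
The proof is a weighted-Hölder fixed-point construction of the Calabi-Yau potential, enabled by two inputs that Section \ref{S_2} has already assembled: the optimal decay of the Ricci potential of the approximate Kähler form (Proposition \ref{S_2_Prop_Ricci_1}) and the Fredholm theory of the Laplacian on each of the four model geometries (Propositions \ref{S_2_Prop_Map_1}--\ref{S_2_Prop_Map_4}). Let $\Omega$ denote the trivializing holomorphic volume form on $X$ coming from the Poincaré residue of $\frac{dz_{1}\wedge\cdots\wedge dz_{n+2}}{d(P+Q)}$, and let $f$ be the Ricci potential of $\omega'=\II\p\pb\phi'$, defined by $(\omega')^{n+1}=e^{f}\,\Omega\wedge\bar\Omega$. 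The plan is to produce $u\in C^{2,\alpha}_{\delta,\tau}(X)$ solving
\begin{equation*}
    (\omega'+\II\p\pb u)^{n+1}=e^{-f}(\omega')^{n+1},\qquad \omega'+\II\p\pb u>0,
\end{equation*}
so that $\phi:=\phi'+u$ realises the Kähler potential required in \ref{S_2_Property_Diff}.

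First I would patch the right inverses supplied by Propositions \ref{S_2_Prop_Map_1}--\ref{S_2_Prop_Map_4} using a partition of unity adapted to the weight function $w$ and the fibration distance $\rho$, producing a parametrix
\begin{equation*}
    G:C^{0,\alpha}_{\delta-2,\tau-2}(X)\longrightarrow C^{2,\alpha}_{\delta,\tau}(X),\qquad \Delta_{\omega'}\circ G=I+K,
\end{equation*}
with $K$ compact and of small operator norm once the threshold $A'$ in \ref{S_2_Defn_Holder_1} is taken large; a Neumann series then upgrades $G$ to a genuine bounded right inverse. Rewriting the Monge-Ampère equation as $u=G\bigl(-f+\mathcal{N}(u)\bigr)$ with
\begin{equation*}
    \mathcal{N}(u):=\Delta_{\omega'}u-\log\frac{(\omega'+\II\p\pb u)^{n+1}}{(\omega')^{n+1}}
\end{equation*}
collecting the quadratic-and-higher remainder in $\II\p\pb u$, Banach's fixed point theorem produces $u$ on a small ball of $C^{2,\alpha}_{\delta,\tau}(X)$ as soon as $\mathcal{N}$ is a contraction there.

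The main obstacle is exactly this contraction step, and it is here that the interval $\delta\in(\tfrac{2(2q-p)}{p},\tfrac{2q}{p})$ becomes forced. The weight pair $(\delta,\tau)$ must simultaneously (i) avoid the discrete sets of indicial roots of $\Delta_{\omega'}$ on every model end, so that $G$ is bounded; (ii) place $f$ in $C^{0,\alpha}_{\delta-2,\tau-2}(X)$, which is the content of the sharpened Proposition \ref{S_2_Prop_Ricci_1}; and (iii) make the bilinear structure of $\mathcal{N}$ send the $\varepsilon$-ball of $C^{2,\alpha}_{\delta,\tau}$ into something strictly stronger than $\varepsilon\cdot C^{0,\alpha}_{\delta-2,\tau-2}$. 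The upper endpoint $\tfrac{2q}{p}$ is the growth rate of $r^{2}$ on the Calabi-Yau cone, above which a pluriharmonic element would sit in the cokernel of $\Delta_{\omega'}$; the lower endpoint $\tfrac{2(2q-p)}{p}$ is exactly where the two-fold product of $\II\p\pb u$ ceases to improve the decay of $u$ in the conical model. Once these three requirements are reconciled, standard elliptic bootstrapping from $C^{2,\alpha}_{\delta,\tau}$ to smoothness concludes existence.

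For the uniqueness clause, suppose $\phi$ and $\phi+u$ both solve the Calabi-Yau equation with $u\in C^{2,\alpha}_{\delta',\tau}(X)$, $\delta'<\tfrac{2q}{p}$. Subtracting the two Monge-Ampère equations and factoring gives
\begin{equation*}
    \II\p\pb u\wedge T=0,\qquad T=\sum_{k=0}^{n}(\II\p\pb\phi)^{k}\wedge(\II\p\pb(\phi+u))^{n-k},
\end{equation*}
a linear uniformly elliptic equation $Lu=0$ whose principal part is comparable to the Kähler Laplacian of $\omega=\II\p\pb\phi$. The indicial roots of $L$ on each model end agree to leading order with those of $\Delta_{\omega'}$ listed in Propositions \ref{S_2_Prop_Map_2}--\ref{S_2_Prop_Map_3}, and in the weight window $\delta'<\tfrac{2q}{p}$, $\tau$ near $0$, the only admissible asymptotic homogeneities are those carried by pluriharmonic functions on the model cones $\CC\times V_{0}$ and $\CC\times V_{1}$. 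An asymptotic-expansion/unique continuation argument, completed by the local $\p\pb$-lemma near the critical set of $\pi$, then identifies $u$ globally with a pluriharmonic function, proving the stated uniqueness.
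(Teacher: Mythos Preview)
Your global Banach fixed-point scheme does not close up as written. The approximate potential $\phi'$ in \eqref{S_2_Defn_AppSol} equals $C(1+|z|^{2})^{a}$ on a fixed compact set containing the critical points, and there its Ricci potential has no smallness whatsoever; the weighted norm $\|f\|_{C^{0,\alpha}_{\delta-2,\tau-2}}$ therefore carries an $O(1)$ contribution from $\{\rho<2A'\}$ that no choice of $A'$ can suppress, so $\|Gf\|$ is not small and the contraction mapping never starts. The paper's remedy is essential and two-staged: first run the fixed point only on $\{\rho>A\}$ (where Proposition \ref{S_2_Prop_Ricci_1} does give smallness) to produce $\phi''$ that is exactly Calabi--Yau outside a compact set; then, with the remaining Ricci potential compactly supported, invoke Hein's $\epsilon$-perturbed Monge--Amp\`ere method and a weighted Moser iteration (Lemma \ref{S_2_Lemma_MADecay}) to solve globally with quantitative decay $|u|\lesssim\rho^{2-\mu+\epsilon'}$. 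Your parametrix construction is fine for the first stage, but you need the second stage as well. Relatedly, your reading of the lower endpoint $\tfrac{2(2q-p)}{p}$ is off: it is dictated by the actual decay rate of the Ricci potential in Proposition \ref{S_2_Prop_Ricci_1}, not by when the bilinear remainder stops improving.

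Your uniqueness argument is also incomplete. The equation $\II\p\pb u\wedge T=0$ is not linear in $u$ (since $T$ depends on $u$), and the assertion that in the window $\delta'<\tfrac{2q}{p}$ ``the only admissible asymptotic homogeneities are those carried by pluriharmonic functions'' is precisely the hard step. The paper instead rewrites the equation as $\Delta_{\phi}u=(\II\p\pb u)^{2}\wedge T/\omega_{\phi}^{n+1}$, so the right-hand side has strictly better decay than $u$ itself; iterating against the surjectivity of $\Delta_{\phi}$ (Proposition \ref{MAP_LAPLACIAN}) drives $u$ down to subquadratic growth modulo a harmonic piece, and then appeals to the nontrivial theorem of Chiu \cite{chiu2024subquadratic} that on a Calabi--Yau manifold of maximal volume growth every subquadratic harmonic function is pluriharmonic. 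A final strong maximum principle kills the decaying remainder. Your ``asymptotic-expansion/unique continuation'' sketch does not supply this input.
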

		The estimate in \ref{S_2_Property_Diff} is very close to be optimal. In the case for $(\CC^{3},\omega_{\CC^{3}})$, we recover the decay rate of Ricci potential in \cite{li2019new}. The main difference is, in \cite{li2019new}, inverting the Laplacian is reduced to approximately solving ODEs in the fiber direction. In our case, however, indicial roots must be avoided to obtain a Fredholm theory on the model space $\CC\times V_{0}$, but this difference is negligible.\\
		
		In Section \ref{S_2_1} we estimate the decay rate of the Ricci potential, and the difference between $\II \p \pb \phi'$ and the metric in the model spaces. Combine with the properties of the Laplacian on each model spaces developed in \cite{szekelyhidi2019degenerations}, in Section \ref{S_2_2}, we are able to invert the Monege Amp\`ere operator outside a large compact set. Therefore, the approximate solution $\phi'$ can be perturbed into $\phi''$ which is Calabi-Yau outside a large compact set. Then we use Hein's approach to construct a solution to the Monge Amp\`ere equation, and improve its decay rate.
	
	\subsection{Geometry of the approximate solution}\label{S_2_1}
		Our approach to compute the Ricci potential is different than that of \cite{szekelyhidi2019degenerations}, and philosophically similar to the one in \cite{li2019new}. Use binomial formula to expand the volume form
			\begin{equation*}
				(\II \p \pb \phi')^{n+1}=(n+1)\omega_{\CC}\wedge \omega_{SRF}^{n}+\omega_{SRF}^{n+1}.
			\end{equation*}
		Apply adjunction formula to transfer the computation on $\Omega^{n+1,n+1}_{X}$ to the pull back bundle $\Omega^{n+2,n+2}_{\CC^{n+2}}\big|_{X}$
			\begin{equation*}
				\Omega_{X}\wedge \ud (P+Q)=c_{n}\ud z_{1}\wedge \cdots \wedge \ud z_{n+2}.
			\end{equation*}
		Using the fact that $\phi'$ is Calabi-Yau on each fibers, the term
			\begin{equation*}
				(n+1)\omega_{\CC}\wedge \omega_{SRF}^{n}\wedge  \II \ud P\wedge \ud \overline{P}=c_{n}'\II \ud z_{1}\wedge \cdots \wedge \ud \overline{z_{n+2}}
			\end{equation*}
		is a constant, and this enable us to improve decay rate of the Ricci potential $h_{Ric}$. The only non-constant term for $h_{Ric}$ is contributed by the second term
			\begin{equation*}
				 \big(\II \p \pb|P(\tilde{z})|^{2/p}\varphi(P(\tilde{z})^{-1/p}\cdot \tilde{z})\big)^{n+1}\wedge \II \ud Q\wedge \ud \overline{Q}.
			\end{equation*}
		Normalize $\Omega_{X}$ so that the Ricci potential $h_{Ric}=\log(1+c_{n}''h)$ where
			\begin{equation*}
				h=\frac{\big(\II \p \pb|P(\tilde{z})|^{2/d}\varphi(P(\tilde{z})^{-1/p}\cdot \tilde{z})\big)^{n+1}}{\II \ud z_{1}\wedge \cdots \wedge\ud \overline{z_{n+2}}}
			\end{equation*}
 
			\begin{prop}\label{S_2_Prop_Ricci_1}
				The Ricci potential for the approximate solution $h\in C^{k,\alpha}_{\delta-2,\tau-2}(X)$ for some $\delta\in (\frac{2(2q-p)}{p},\frac{2q}{p})$. 
			\end{prop}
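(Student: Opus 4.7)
The plan is to split $X$ into the three regions defined by the weight function $w$ and exploit two fiberwise cancellations. On each smooth fiber of $\pi$, $\omega_{SRF}$ is a rescaling of the Calabi-Yau K\"ahler form on $V_{1}$, so the pure-fiber part of $\omega_{SRF}^{n}\wedge\omega_{\CC}$ matches the normalization of $\Omega_{X}\wedge\overline{\Omega_{X}}$; this produces the constant $c_{n}''$ in $h_{Ric}=\log(1+c_{n}''h)$ and leaves all of $h$ to arise from terms that either differentiate the scaling factor $|P|^{2/p}$ in the base direction or record the Conlon-Hein deviation $\varphi-G_{0}^{*}r^{2}$ on $V_{1}$.

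In the outer region $\{R>2\kappa\rho\}$, I would use homogeneity of $r^{2}$ under the weighted action to write
\begin{equation*}
|P(\tilde z)|^{2/p}\varphi(P(\tilde z)^{-1/p}\cdot\tilde z)=r^{2}(\tilde z)+|P(\tilde z)|^{2/p}(\varphi-G_{0}^{*}r^{2})(P(\tilde z)^{-1/p}\cdot\tilde z).
\end{equation*}
The first summand together with $|z_{n+2}|^{2}$ is the cone K\"ahler potential on $\CC\times V_{0}$, for which $h$ vanishes identically, while the Conlon-Hein estimate $|\nabla^{k}(\varphi-G_{0}^{*}r^{2})|_{g_{\varphi}}\lesssim r^{-c-k}$ controls the second summand. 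Taking the $(n+1)$-st power and dividing by the reference top form yields $|h|\lesssim\rho^{-c}$; since $c$ may be taken as large as needed, this fits inside $\rho^{\delta-2}w^{\tau-2}$ in this region for any $\delta,\tau$ in the stated range.

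In the intermediate and innermost regions I would change variables to $\tilde v=P(\tilde z)^{-1/p}\cdot\tilde z$ and expand
\begin{equation*}
\II\p\pb\bigl(|P|^{2/p}\varphi(\tilde v)\bigr)=|P|^{2/p}\II\p\pb\varphi(\tilde v)+\II\p|P|^{2/p}\wedge\pb\varphi+\II\p\varphi\wedge\pb|P|^{2/p}+\varphi\,\II\p\pb|P|^{2/p}.
\end{equation*}
The first term is the rescaled fiber Calabi-Yau form and contributes to the constant piece; the remaining three each carry a factor $\p|P|^{2/p}$ or $\p\pb|P|^{2/p}$ which, along $X$ where $|P|=|Q(z_{n+2})|$, is of order $\rho^{2q/p-1}$ or $\rho^{2q/p-2}$ respectively. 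Combined with $\varphi(\tilde v)\sim r^{2}(\tilde v)\sim(\rho w)^{2}\rho^{-2q/p}$ in the intermediate region and $\varphi(\tilde v)=O(1)$ in the innermost one, expanding the $(n+1)$-st power and dividing by the reference volume gives the pointwise bound $|h|\lesssim\rho^{2(q-p)/p}$ in the innermost region, where $w\sim\rho^{q/p-1}$; this is exactly $\rho^{\delta-2}w^{\tau-2}$ at the endpoint $\delta=\frac{2(2q-p)}{p}$, $\tau=0$, and so sits strictly inside the weighted space for $\delta$ a hair larger.

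The main obstacle is the bookkeeping at the transition $R\sim\rho^{q/p}$, where both error sources are simultaneously active and the fiber/base decomposition degenerates. The lower bound $\delta>\frac{2(2q-p)}{p}$ in the proposition is precisely the threshold at which the base-derivative error in the innermost region is absorbed by the weight, while the upper bound $\delta<\frac{2q}{p}$ is the structural ceiling imposed by the weighted space needed for the Laplacian inversion in later sections. Higher-derivative and H\"older seminorm bounds then follow by standard scaling, using $\rho$ as the base length scale and $\rho w$ as the fiber length scale: every derivative of $|P|^{2/p}$ costs $\rho^{-1}$ and every derivative of $\varphi(\tilde v)$ costs $(\rho w)^{-1}$, matching the powers built into $C^{k,\alpha}_{\delta-2,\tau-2}$.
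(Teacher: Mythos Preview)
Your three--region decomposition and the identification of the threshold $\delta>\tfrac{2(2q-p)}{p}$ coming from the innermost region are correct and match the paper. But the mechanism you invoke in the outer region is wrong, and this error propagates into a mis-description of what is actually driving the estimate.

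First, the claim that ``$c$ may be taken as large as needed'' is simply false: the Conlon--Hein decay exponent $c>0$ is a fixed constant determined by the cone $V_{0}$ and the asymptotics of $\varphi$. Second, and more seriously, the assertion that $h$ vanishes identically for the cone potential $r^{2}+|z_{n+2}|^{2}$ is not justified. Recall that by the paper's setup $h$ is the ratio
\[
h=\frac{\bigl(\II\p\pb|P|^{2/p}\varphi\bigr)^{n+1}\wedge\II\, dQ\wedge d\overline{Q}}{\II\, dz_{1}\wedge\cdots\wedge d\overline{z_{n+2}}},
\]
computed in the ambient $\CC^{n+2}$; the form $(\II\p\pb r^{2})^{n+1}$ is a top form on $\CC^{n+1}_{\tilde z}$, not on the $n$--dimensional cone $V_{0}$, and there is no reason for it to vanish. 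The paper does not attempt any such cancellation. Instead it rescales $\tilde z=D\cdot\tilde v$, $z_{n+2}=Dv_{n+2}$ with $D\sim\rho$ and reads off directly that $h=D^{2(q-p)}\cdot(\text{bounded})$: the factor $D^{2(q-p)}$ comes from the homogeneity of the numerator (degree $2(n+1)+2q$) against the denominator (degree $2(n+p)+2$), with the $|Q'|^{2}$ piece contributing the $2q$. So the true decay in the outer region is $\rho^{2(q-p)}$, not $\rho^{-c}$ for arbitrary $c$; this is still far faster than required, but the source is the $dQ\wedge d\overline{Q}$ scaling, not the Conlon--Hein error.

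In the intermediate region the paper again rescales, now by $K\sim R_{\tilde z}$, and obtains $|h|\lesssim \rho^{2(q-1)}R_{\tilde z}^{-2(p-1)}$; your Leibniz expansion of $\II\p\pb(|P|^{2/p}\varphi)$ conflates the ambient computation with a computation on $X$ (your $\partial|P|^{2/p}\sim\rho^{2q/p-1}$ is only valid after restricting to $X$ where $|P|=|Q|$, whereas $h$ is defined via ambient forms). The paper's coordinate change $Q(z_{n+2})=v_{n+2}^{q}$, $\tilde z=v_{n+2}^{q/p}\cdot\tilde v$ in the innermost region is the clean way to carry out what you are gesturing at, and gives $|\nabla^{k}h|\lesssim\rho^{2(q-p)/p-kq/p}$ directly. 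Your endpoint arithmetic there is correct, but the route to it needs to be the rescaling argument, not the product-rule expansion.
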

			
			\begin{proof}
				The idea is similar to \cite{szekelyhidi2019degenerations}, but the argument is simpler. The strategy is to use scaled coordinate to compare $X$ with the model geometries. And since we have to deal with difference variables, a subscript will be added to indicate the dependency on the variables. Denote $Err:=|P(\tilde{z})|^{2/p}\varphi(P(\tilde{z})^{-1/p}\cdot \tilde{z})-r^{2}_{\tilde{z}}$. We can rewrite $h$ as follow:
					\begin{equation*}
						h=\frac{(\II \p \pb(r_{\tilde{z}}^{2}+Err))^{n+1}\wedge \II \ud Q\wedge \ud \overline{Q}}{\II \ud z_{1}\wedge \cdots \wedge \ud \overline{z_{n+2}}}.
					\end{equation*}
					\subsubsection*{$R_{\tilde{z}}>\kappa \rho$ :}
						Consider the coordinate change
							\begin{equation*}
								\tilde{z}=D\cdot \tilde{v}, z_{n+2}= D v_{n+2}, D<\rho<2D.
							\end{equation*}
						Now in this coordinate
							\begin{equation*}
								|\nabla^{k}_{D^{-2}g_{\phi'}}D^{-2}Err|<C_{k}D^{(2+c)(q-p)/p}.
							\end{equation*}
						Write $Q_{D}(v_{n+2})=D^{-p}Q$ for short. The Ricci potential can be estimated
							\begin{equation*}
								|\nabla_{D^{-2}g_{\phi'}}h|=D^{2(q-p)}\bigg|\nabla^{k}\frac{(\II\p \pb(r_{\tilde{v}}^{2}+D^{-2}Err))^{n+1}\wedge \II \ud Q_{D}\wedge \overline{Q_{D}}}{\II \ud v_{1}\wedge \cdots \ud \overline{v_{n+2}}}\bigg|<C_{k}D^{2(q-p)}.
							\end{equation*}
					
					\subsubsection*{$\kappa^{-1}\rho^{q/p}<R_{\tilde{z}}<\kappa\rho$ :}
						Consider the coordinate change
							\begin{equation*}
								\tilde{z}=K\cdot \tilde{v}, z_{n+2}=z_{n+2}^{0}+Kv_{n+2}, |v_{n+2}|<1,
							\end{equation*}
						with $D<\rho<2D, K<R_{\tilde{z}}<2K$ and $\kappa^{-1}\rho^{q/p}<K<\frac{1}{2}\kappa\rho$. Similar argument implies
							\begin{equation*}
								\begin{split}
									&|\nabla_{K^{-2}g_{\phi'}}^{k}K^{-2}Err|<C_{k}(K^{-1}D^{q/p})^{2+c},\\
									&|\nabla_{K^{-2}g_{\phi'}}h|<C_{k}D^{2(q-1)}K^{-2(p-1)}.
								\end{split}
							\end{equation*}
					\subsubsection*{$R_{\tilde{z}}<\kappa^{-1}\rho^{q/p}$ :}
						Consider coordinate change
							\begin{equation*}
								Q(z_{n+2})=v_{n+2}^{q}, \tilde{z}=v_{n+2}^{q/p}\cdot \tilde{v}.
							\end{equation*}
						The defining equation is
							\begin{equation*}
								P(\tilde{v})+1=0, R_{\tilde{v}}\lesssim \kappa^{-1},
							\end{equation*}
						with potential
							\begin{equation*}
								|Q_{t}^{-1}(v_{n+2}^{1/p})|^{2}+|v_{n+2}|^{2q/p}\varphi(\tilde{v}).
							\end{equation*}
						In the region $\{R_{\tilde{v}}\lesssim \kappa^{-1}, |v_{n+2}-v_{0}|<B|v_{0}|^{q/p}\}$, the metric is approximated by $|v_{0}|^{2q/p}g_{\CC\times V_{1}}$. It follows immediately that
							\begin{equation*}
								|\nabla^{k}_{g_{\phi'}} h|=C_{k}(\kappa) D^{2(q-p)/p-kq/p}
							\end{equation*}
						We conclude that
							\begin{equation*}
								|\nabla_{g_{\phi'}}^{k}h|<C_{k}\rho^{\delta-k}w^{\tau-k},
							\end{equation*}
						for some $\delta>\frac{2(2q-p)}{p}$, $\tau<0$ and sufficiently close to $0$.
			\end{proof}
			\begin{prop}\label{S_2_Prop_Met_1}
				For any $\epsilon>0$, there exists $A(\epsilon),\Lambda(\epsilon)$ such that $\forall A>A(\epsilon), \Lambda>\Lambda(\epsilon)$, then on the region $\{\rho>A, R_{\tilde{z}}>\Lambda^{3/4} \rho^{q/p}\}$
					\begin{equation*}
						\|g_{\CC\times V_{0}}-g_{\phi'}\|_{C^{k,\alpha}_{0,0}}<C_{k}\epsilon.
					\end{equation*}
				Moreover, on the region $U_{i}:=\{|z_{n+2}-z_{0}^{i}|<3B|z_{0}^{i}|^{q/p}, R_{\tilde{z}}<2\Lambda^{2}\rho^{q/p},\rho>A\}$, we can first choose $\Lambda$ large enough and then choose $B$ large enough and at last $A$ large enough, then we have
					\begin{equation*}
						\||z_{0}^{i}|^{2q/p}g_{\CC\times V_{1}}-g_{\phi'}\|_{C^{k,\alpha}_{0}(U_{i})}<C_{k}\epsilon.
					\end{equation*}
			\end{prop}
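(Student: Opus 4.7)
Plan: I would prove the proposition by a rescaling argument in each of the two regions, reducing the claim in each case to the Conlon-Hein asymptotic estimate $|\nabla^{k}(\varphi-G_{0}^{*}(r^{2}))|_{g_{\varphi}}<C_{k}r^{-c-k}$ for the Calabi-Yau potential on the model fiber $V_{1}$.

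For the first region $\{\rho>A,\, R_{\tilde{z}}>\Lambda^{3/4}\rho^{q/p}\}$, the key observation is that the rescaled fiber point $\tilde{w}:=P(\tilde{z})^{-1/p}\cdot \tilde{z}\in V_{1}$ satisfies $r(\tilde{w})\sim R_{\tilde{z}}/|P(\tilde{z})|^{1/p}\gtrsim \Lambda^{3/4}$, thanks to $|P(\tilde{z})|\sim|Q(z_{n+2})|\lesssim |z_{n+2}|^{q}\leq \rho^{q}$. I would then write
\begin{equation*}
\phi'-\bigl(|z_{n+2}|^{2}+G_{0}^{*}(r^{2})(\tilde{z})\bigr)=|P(\tilde{z})|^{2/p}\bigl(\varphi(\tilde{w})-G_{0}^{*}(r^{2})(\tilde{w})\bigr),
\end{equation*}
apply Conlon-Hein to the right-hand side, and transfer the estimate to $X$ via the weighted-action scaling $\tilde{z}=P(\tilde{z})^{1/p}\cdot \tilde{w}$ together with the chain rule. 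Two covariant derivatives produce the metric difference with pointwise bound $\Lambda^{-3c/4}$, and higher $g_{\phi'}$-derivatives decay according to the $\rho$-cone scaling that matches the weight $\rho^{-k}w^{-k}$ appearing in $C^{k,\alpha}_{0,0}$. This yields $\|g_{\phi'}-g_{\CC\times V_{0}}\|_{C^{k,\alpha}_{0,0}}<C_{k}\epsilon$ once $\Lambda$ is sufficiently large.

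For the second region inside $U_{i}$, I would introduce scaled coordinates $\tilde{z}=|z_{0}^{i}|^{q/p}\cdot \tilde{v}$ and $z_{n+2}=z_{0}^{i}+|z_{0}^{i}|^{q/p}v_{n+2}$, under which $U_{i}$ becomes $\{|v_{n+2}|<3B,\; R_{\tilde{v}}<2\Lambda^{2}\}$ and the hypersurface equation becomes $P(\tilde{v})+Q_{\mathrm{res}}(v_{n+2})=0$, with $Q_{\mathrm{res}}(v_{n+2})=|z_{0}^{i}|^{-q}Q(z_{0}^{i}+|z_{0}^{i}|^{q/p}v_{n+2})$. Taylor expansion of $Q$ shows $Q_{\mathrm{res}}$ differs from its constant leading term $Q(z_{0}^{i})/|z_{0}^{i}|^{q}$ by at most $O(B\cdot A^{q/p-1})$ in $C^{k,\alpha}$, which is small once $A$ is chosen after $B$. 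Because $\tilde{w}=P(\tilde{v})^{-1/p}\cdot \tilde{v}$ depends only on $\tilde{v}$, and $|z_{n+2}|^{2}=|z_{0}^{i}|^{2q/p}|v_{n+2}|^{2}$ modulo a constant and a pluriharmonic function, the scaled potential $|z_{0}^{i}|^{-2q/p}\phi'$ converges in $C^{k,\alpha}$ to the product Calabi-Yau potential $|v_{n+2}|^{2}+|P(\tilde{v})|^{2/p}\varphi(P(\tilde{v})^{-1/p}\cdot \tilde{v})$ on $\CC\times V_{1}$, from which the $C^{k,\alpha}_{0}(U_{i})$-bound follows. The order of quantifiers ($\Lambda$ first, then $B$, then $A$) is dictated by this Taylor remainder: $\Lambda$ and $B$ fix the fixed-scale compact region in $V_{1}$ on which everything takes place, and $A$ then controls the size of the remainder.

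The main obstacle is in the first region: verifying that the H\"older seminorms, not merely the $L^{\infty}$ norm, of $g_{\phi'}-g_{\CC\times V_{0}}$ inherit the $\Lambda^{-3c/4}$-smallness uniformly after the $(\rho,R_{\tilde{z}})$-dependent rescaling. This requires checking that Conlon-Hein's bound, stated in the $g_{\varphi}$-geometry of $V_{1}$, translates correctly under the weighted action and chain rule into an estimate in the $g_{\phi'}$-geometry on $X$ with the specific weights $\rho^{0}w^{0}$ used in $C^{k,\alpha}_{0,0}$. The second region, by contrast, is essentially a polynomial computation on a bounded fixed-scale domain and presents no genuine analytic difficulty.
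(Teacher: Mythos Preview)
Your proposal is correct and follows the same rescaling-plus-Conlon--Hein strategy as the paper (which defers to Lemma~\ref{S_3_Lemma_Met_1} with $t=1$), but the execution differs in two places worth noting.

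For the first region, the paper does not work globally with the map $\tilde{z}\mapsto\tilde{w}=P(\tilde{z})^{-1/p}\cdot\tilde{z}$; instead it localises dyadically, rescaling by $D$ on $\{R_{\tilde{z}}>\kappa\rho\}$ and by $K$ on $\{\Lambda^{3/4}\rho^{q/p}<R_{\tilde{z}}<\kappa\rho\}$, so that in each rescaled chart the comparison becomes a bounded-geometry statement and the weighted H\"older seminorms follow for free. The paper also separates two error sources: the $Err$ term you isolate (bounded by $C_{k}\Lambda^{-3(2+c)/4}$) and a \emph{projection error} coming from $X\neq\CC\times V_{0}$, i.e.\ from $G_{0}$ not being holomorphic, bounded by $C_{k}D^{q-p}$ or $C_{k}D^{q-1}K^{-(p-1)}$. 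Your formulation $\phi'-\bigl(|z_{n+2}|^{2}+G_{0}^{*}r^{2}\bigr)$ implicitly absorbs the latter into the comparison of $i\partial\bar\partial G_{0}^{*}r^{2}$ with $G_{0}^{*}g_{V_{0}}$, which is fine but is the reason \emph{both} $A$ and $\Lambda$ must be large---your summary ``once $\Lambda$ is sufficiently large'' understates this. The dyadic approach is what resolves the obstacle you flag about H\"older seminorms: after rescaling, everything is uniformly bounded on a fixed ball.

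For the second region, the paper uses the nonlinear change $Q(z_{n+2})=v_{n+2}^{q}$, $\tilde{z}=v_{n+2}^{q/p}\cdot\tilde{v}$, which lands $\tilde{v}$ \emph{exactly} on $V_{1}$ and pushes the error into the base factor $|Q^{-1}(v_{n+2}^{q})|^{2}$ versus $|v_{n+2}|^{2}$. Your linear change $\tilde{z}=|z_{0}^{i}|^{q/p}\cdot\tilde{v}$, $z_{n+2}=z_{0}^{i}+|z_{0}^{i}|^{q/p}v_{n+2}$ instead leaves a perturbed fibre $P(\tilde{v})+Q_{\mathrm{res}}(v_{n+2})=0$ and pushes the error into $Q_{\mathrm{res}}$. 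Both routes give the same $O(BA^{q/p-1})$ smallness and the same order of quantifiers; yours is arguably more elementary, the paper's is more adapted to the fibration structure used later in Section~\ref{S_3}.
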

			\begin{proof}
				The proof is the same as in Lemma \ref{S_3_Lemma_Met_1} by setting $t=1$.
			\end{proof}
		
	\subsection{Mapping properties of Laplacian}	\label{S_2_2}
		We mainly utilize the following propositions to define the paramatrixes both in constructing warped QAC metric and in the gluing theorem. The proofs are drawn down from \cite{szekelyhidi2019degenerations}. It should be remarked that in our case the dimension of the manifold is $2n+2$ while in \cite{szekelyhidi2019degenerations} the dimension is $2n$.
			\begin{prop}\label{S_2_Prop_Map_1}
				Let $\tau\in (2-2n,0)$, the Laplacian
				\begin{equation*}
					\Delta_{\CC\times V_{0}}: C^{2,\alpha}_{\tau}(\CC\times V_{0})\to C^{0,\alpha}_{\tau-2}(\CC\times V_{0})
				\end{equation*}
				is invertible. Denotes its inverse as $P_{3}$.
			\end{prop}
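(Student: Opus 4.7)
The plan is to follow the separation-of-variables approach for the product of flat $\CC$ with a Calabi--Yau cone. Write $V_0 = C(L_{V_0})$ where the Sasaki--Einstein link $L_{V_0}$ has real dimension $2n-1$, and let $\{\phi_j\}$ be an orthonormal eigenbasis for $\Delta_{L_{V_0}}$ with eigenvalues $0 = \mu_0 < \mu_1 \le \mu_2 \le \cdots$. Expanding $\Delta u = f$ in this basis decouples the problem into a family of equations on $\CC \times \R^+$:
\[
\Bigl(\Delta_\CC + \p_r^2 + \frac{2n-1}{r}\p_r - \frac{\mu_j}{r^2}\Bigr) u_j(s,r) = f_j(s,r),
\]
whose $r$-indicial roots are $\lambda_\pm^{(j)} = -(n-1) \pm \sqrt{(n-1)^2 + \mu_j}$. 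For $j = 0$ these are $0$ and $2-2n$; for $j \ge 1$ monotonicity in $\mu_j$ places all other roots outside the closed interval $[2-2n, 0]$, so the window $(2-2n, 0)$ is indicial-root-free for every mode.

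Next I would construct a modewise parametrix: Fourier transform in $s \in \CC$ reduces each mode to a modified-Bessel-type ODE in $r$ with parameter $|\xi|$, and variation of parameters using the two radial solutions asymptoting to $r^{\lambda_\pm^{(j)}}$ produces the unique solution with the correct growth at $r \to 0$ and $r \to \infty$. The combined dilation $(s, y) \mapsto (\lambda s, \lambda \cdot y)$ scales $\Delta$ by $\lambda^{-2}$ and preserves $\xi^{-2} g$, so the weighted $C^{k,\alpha}_\tau$-estimate reduces to an ordinary interior Schauder bound on unit balls of $\xi^{-2} g$, producing a mode-by-mode inequality of the form $\|u_j\|_{C^{2,\alpha}_\tau} \lesssim (1+\mu_j)^{-1} \|f_j\|_{C^{0,\alpha}_{\tau-2}}$. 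Weyl's law on $L_{V_0}$ together with standard Sobolev--Schauder summation then reassembles $u = \sum_j u_j(s,r) \phi_j(\theta)$ into an element of $C^{2,\alpha}_\tau(\CC \times V_0)$ that solves the equation. Injectivity follows because any homogeneous solution in this space has each mode squeezed strictly between the consecutive indicial behaviors in the open window $(2-2n, 0)$, forcing $u_j \equiv 0$ by uniqueness for the radial ODE.

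The main obstacle is not the indicial analysis, which is classical cone theory, but rather the $\CC$-uniformity: since the weight $\xi \sim R_{\tilde z}$ depends only on the $V_0$-factor, H\"older control must be established uniformly in $s \in \CC$ at each $V_0$-radius. This is handled by exploiting translation invariance in the $s$ variable together with the combined dilation, reducing every point to a fixed reference scale where interior Schauder estimates apply on unit balls of $\xi^{-2} g$, and then propagating the bound by scaling. These maneuvers are exactly those executed in \cite{szekelyhidi2019degenerations}, and transport to the present setting with only the shift in the dimension count reflected in the stated window $(2-2n, 0)$.
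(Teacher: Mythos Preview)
The paper's own proof is simply a citation to \cite{szekelyhidi2019degenerations}, so there is no detailed argument to compare against. Your identification of the indicial window $(2-2n,0)$ and of the main difficulty---uniformity of the weighted estimate in the $\CC$-factor---is correct and in line with that reference, and your final paragraph on the scaling/translation reduction to interior Schauder on unit balls of $\xi^{-2}g$ is exactly the mechanism behind the weighted a priori estimate used there.

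The weak point is the middle step: reconstituting a $C^{2,\alpha}$ solution by summing modes $u_j(s,r)\phi_j(\theta)$ via Weyl asymptotics. H\"older regularity does not pass cleanly through eigenfunction expansion on the link; with only $C^{0,\alpha}$ control on $f$, the modal coefficients $f_j$ decay too slowly, and the eigenfunctions $\phi_j$ grow in sup-norm like a power of $\mu_j$, so the claimed $(1+\mu_j)^{-1}$ gain is not enough to make $\sum u_j\phi_j$ converge in $C^{2,\alpha}$ by direct summation. The route taken in \cite{szekelyhidi2019degenerations} (and implicitly adopted by the paper) separates the two halves: the a priori estimate $\|u\|_{C^{2,\alpha}_\tau}\lesssim\|\Delta u\|_{C^{0,\alpha}_{\tau-2}}$ comes from the scaling/blow-up argument you already describe, while existence is obtained by exhaustion/barriers or weighted $L^2$ theory followed by elliptic bootstrapping---never by summing modes in H\"older norm. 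If you drop the Fourier/Bessel reconstruction and instead obtain a solution by one of these methods, your a priori estimate then upgrades it to $C^{2,\alpha}_\tau$ and the argument closes.
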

			\begin{proof}
				See Corollary 12 and the proof is identical to Proposition 19 in \cite{szekelyhidi2019degenerations}.
			\end{proof}
			\begin{prop}\label{S_2_Prop_Map_2}
				Let $\tau\in (2-2n,0)$ and $\delta_{0}$ avoid a discrete set of indicial roots, the Laplacian
				\begin{equation*}
					\Delta_{\CC\times V_{0}}: C^{2,\alpha}_{\delta_{0},\tau}\to C^{0,\alpha}_{\delta_{0}-2,\tau-2}(\CC\times V_{0})
				\end{equation*}
				is invertible. Denote its inverse as $P_{0}$.
			\end{prop}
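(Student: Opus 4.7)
The plan is to follow the template of Proposition 19 in \cite{szekelyhidi2019degenerations}, adapted to accommodate the additional weight $\delta_0$ at the cone ends. The key observation is that $\CC\times V_0$ is itself a Calabi-Yau cone under the weighted scaling $(\tilde z, z_{n+2})\mapsto (\lambda\cdot \tilde z,\lambda z_{n+2})$, so in polar coordinates $(\rho,\theta)\in \R^+\times L$ the Laplacian takes the form
$$\Delta_{\CC\times V_0}=\partial_\rho^2+\frac{2n+1}{\rho}\partial_\rho+\frac{1}{\rho^2}\Delta_L,$$
where the link $L$ is compact with a circle $S^1\subset L$ of conical singularities modeled on $V_0$. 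The weight window $\tau\in (2-2n,0)$ is precisely the range in which $\Delta_L$ is invertible on the scale-invariant $(w')^\tau$-weighted H\"older space over $L$---this is exactly the content of Proposition \ref{S_2_Prop_Map_1} restricted to scale-invariant functions.

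First I would identify the indicial roots. A homogeneous function $u=\rho^s\phi(\theta)$ lies in $\ker\Delta_{\CC\times V_0}$ iff $\Delta_L\phi+s(s+2n)\phi=0$. By the edge-type calculus underlying Proposition \ref{S_2_Prop_Map_1}, $\Delta_L$ has discrete spectrum on the $\tau$-weighted space on $L$, and consequently the set of indicial roots for $\delta_0$ is discrete.

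For $\delta_0$ avoiding this set, I would construct a parametrix via the Mellin transform in $\rho$, which turns $\Delta u=f$ into the one-parameter family
$$(s(s+2n)+\Delta_L)\hat u(s,\cdot)=\hat f(s,\cdot)$$
on $L$. Proposition \ref{S_2_Prop_Map_1} inverts this family on a vertical contour whose real part is tuned to $\delta_0$ and avoids indicial roots; a standard contour-shift plus holomorphic functional calculus produces an explicit parametrix with the desired $C^{2,\alpha}_{\delta_0,\tau}\to C^{0,\alpha}_{\delta_0-2,\tau-2}$ mapping properties modulo a compact error. A blow-up/contradiction a priori estimate then closes the Fredholm picture: if $u_i\in C^{2,\alpha}_{\delta_0,\tau}$ has unit norm and $\Delta u_i\to 0$ in $C^{0,\alpha}_{\delta_0-2,\tau-2}$, rescaling at points where the norm concentrates produces either a nontrivial homogeneous solution of degree $\delta_0$ on $\CC\times V_0$---forbidden since $\delta_0$ avoids indicial roots---or a nontrivial $\tau$-weighted kernel element on the model $\CC\times V_1$, ruled out by Proposition \ref{S_2_Prop_Map_1}.

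The main obstacle I anticipate is the bookkeeping at the intersection of the two weighted regimes: transferring Mellin-side estimates into H\"older norms defined with the specific weight $w'$, and certifying that the parametrix simultaneously respects the $\tau$-weight (behavior near the singular circle of $L$) and the $\delta_0$-weight (behavior at the cone ends). Once Fredholmness is in hand, triviality of the kernel of $\Delta_{\CC\times V_0}$ on $C^{2,\alpha}_{\delta_0,\tau}$---via Liouville-type scaling on the cone combined, again, with the avoidance of indicial roots---upgrades the Fredholm property to an isomorphism.
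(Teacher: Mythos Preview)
The paper's own proof is simply a citation (``See Proposition 13, 23 in \cite{szekelyhidi2019degenerations}''), so there is no in-house argument to compare against; your sketch is a reasonable expansion of what those references contain, and the overall architecture---separation of variables on the cone, discreteness of indicial roots via the edge Laplacian on the singular link $L$, and a blow-up contradiction to close the a priori estimate---is the standard and correct route.

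One slip to flag: in your blow-up dichotomy you say the second alternative produces a kernel element on $\CC\times V_1$. But we are on $\CC\times V_0$, and $V_1$ never enters this picture. Rescaling near the singular circle of $L$ (where $w'\to 0$) produces the model $\CC\times V_0$ again, now seen at a scale where $\rho$ is effectively constant and only the single weight $\tau$ survives; triviality of the kernel there is exactly Proposition~\ref{S_2_Prop_Map_1}, so your citation is right even though the named model space is not. Also, you invoke Proposition~19 of \cite{szekelyhidi2019degenerations} as the template, whereas the paper points to Propositions~13 and~23; Proposition~19 is what underlies the \emph{single}-weighted statement (Proposition~\ref{S_2_Prop_Map_1} here), while the double-weighted version needs the additional indicial-root analysis of Proposition~13 together with the parametrix patching of Proposition~23. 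This is a labeling issue rather than a mathematical one---your outline already contains the extra ingredients.
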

			\begin{proof}
				See Proposition 13, 23 in \cite{szekelyhidi2019degenerations}.
			\end{proof}
		
			\begin{prop}\label{S_2_Prop_Map_3}
				Let $\tau$ as above, the Laplacian
				\begin{equation*}
					\Delta_{\CC\times V_{1}}: C^{2,\alpha}_{\tau}(\CC\times V_{1})\to C^{0,\alpha}_{\tau-2}(\CC\times V_{1})
				\end{equation*}
				is invertible. Denote its inverse as $P_{1}$.
			\end{prop}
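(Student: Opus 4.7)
My plan is to follow the proof of Proposition 19 in \cite{szekelyhidi2019degenerations} essentially verbatim, adjusting for dimension. The only structural difference is that $V_{1}\subset\CC^{n+1}$ has complex dimension $n$ here, so $\CC\times V_{1}$ has real dimension $2n+2$ (versus $2n$ in \cite{szekelyhidi2019degenerations}); this shifts the lower endpoint of the indicial-root-free weight interval to $2-2n$, matching the range in the statement.

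First I would establish a scale-invariant a priori Schauder estimate. Because $C^{k,\alpha}_{\tau}(\CC\times V_{1})$ is defined through the rescaled metric $\xi^{-2}g_{\CC\times V_{1}}$ with multiplicative weight $\xi^{-\tau}$, ordinary interior Schauder estimates on unit balls in the rescaled metric, patched by a covering argument, yield the weak bound
\[
\|u\|_{C^{2,\alpha}_{\tau}}\leq C\bigl(\|\Delta u\|_{C^{0,\alpha}_{\tau-2}}+\|u\|_{C^{0}_{\tau}}\bigr).
\]
To upgrade to the estimate without the $C^{0}$-term I would argue by contradiction: any sequence violating it, after renormalization, concentrates either in a bounded region, producing a nontrivial harmonic $u\in C^{2,\alpha}_{\tau}(\CC\times V_{1})$, or escapes to $\xi\to\infty$, producing a nontrivial element of $\ker\Delta_{\CC\times V_{0}}$ in $C^{2,\alpha}_{\tau}(\CC\times V_{0})$. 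The second possibility is excluded by Proposition \ref{S_2_Prop_Map_1}, the first by the injectivity argument below.

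Injectivity is proved by partial Fourier decomposition in the $\CC$ factor. Each nonzero mode $\hat{u}_{\xi}$ satisfies $(\Delta_{V_{1}}-|\xi|^{2})\hat{u}_{\xi}=0$; since $\Delta_{V_{1}}-|\xi|^{2}$ is strictly negative, any decaying solution must vanish. The zero mode is a harmonic function on the asymptotically conical $V_{1}$ with decay rate $\tau\in(2-2n,0)$, and the AC Fredholm theory forces it to be zero because this interval avoids all indicial roots of the cone Laplacian on the link of $V_{0}$. For surjectivity I would build a parametrix $P_{1}^{(0)}$ by combining a bounded inverse of $\Delta_{V_{1}}$ on the weighted Hölder space in the strip $\tau\in(2-2n,0)$ with the Newtonian potential on $\CC$; the commutator with cutoffs produces a compact error, and the Fredholm alternative together with injectivity yields the inverse $P_{1}$.

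The main obstacle is the zero Fourier mode, where the equation genuinely reduces to $\Delta_{V_{1}}$ on the AC manifold $V_{1}$, and one must control the output uniformly in the $\CC$ direction using only the $V_{1}$-weight $\xi$. Verifying that $(2-2n,0)$ contains no indicial roots of the cone Laplacian on $V_{0}$ is the one place where the dimension shift from $2n$ to $2n+2$ materializes, and is exactly the numerical content of the hypothesis on $\tau$.
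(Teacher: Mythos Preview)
The paper's own proof is just the citation ``See Proposition 21, 24 in \cite{szekelyhidi2019degenerations},'' and your outline is precisely the scheme those results implement (weighted Schauder estimate, blow-up contradiction reducing to Liouville on the two model spaces, then a parametrix for surjectivity), with the dimension shift you identify.

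Two small points are worth flagging. First, the relevant references are Propositions 21 and 24, not Proposition 19: the latter handles the cone $\CC\times V_{0}$ and is what the paper already invokes for Proposition \ref{S_2_Prop_Map_1}; the smoothing $V_{1}$ is treated separately in \cite{szekelyhidi2019degenerations} because separation of variables in the fibre direction is no longer available. Second, in your Fourier step the weight $\xi$ depends only on the $V_{1}$ factor, so an element of $C^{2,\alpha}_{\tau}(\CC\times V_{1})$ is merely bounded, not decaying, in the $\CC$ direction, and its Fourier transform there is only a tempered distribution. The claim ``any decaying solution must vanish'' for the nonzero modes therefore needs more justification than a pointwise maximum-principle argument; this Liouville statement on $\CC\times V_{1}$ is exactly the content of Proposition 24 in \cite{szekelyhidi2019degenerations}. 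One clean way to make your sketch rigorous is to first establish translation-invariance in $\CC$ (e.g.\ via a maximum-principle argument exploiting that $\xi^{\tau}$ is superharmonic for $\tau\in(2-2n,0)$), after which the problem reduces to the AC Liouville theorem on $V_{1}$ alone, as you describe for the zero mode.
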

			\begin{proof}
				See Proposition 21,  24 in \cite{szekelyhidi2019degenerations}.
			\end{proof}		
			\begin{prop}\label{S_2_Prop_Map_4}[c.f. \cite{szekelyhidi2019degenerations}]
				There is a bounded right inverse for the Laplacian $\Delta_{g_{\phi'}}$ on $\{\rho>A\}$ for some sufficiently large $A$.
			\end{prop}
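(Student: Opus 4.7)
The plan is to construct the right inverse as a gluing of the model inverses $P_0$ and $P_1$ from Propositions \ref{S_2_Prop_Map_2} and \ref{S_2_Prop_Map_3}, and then correct by a Neumann series. The acting spaces should be $\Delta_{g_{\phi'}} : C^{2,\alpha}_{\delta,\tau}(\{\rho>A\}) \to C^{0,\alpha}_{\delta-2,\tau-2}(\{\rho>A\})$ for $\delta \in (\frac{2(2q-p)}{p},\frac{2q}{p})$ (avoiding indicial roots) and $\tau$ close to $0$ with $\tau \in (2-2n,0)$, matching the weight ranges needed to invoke the two model results.

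First I would choose a geometric decomposition of $\{\rho>A\}$. Fix $\Lambda,B$ large and let $\Omega_0 = \{R_{\tilde{z}} > \Lambda^{3/4}\rho^{q/p}\}$ be the ``conical'' region, and cover the complementary region $\{R_{\tilde{z}}<2\Lambda^2 \rho^{q/p}\}$ by a Whitney-type family of charts $U_i$ of the form given in Proposition \ref{S_2_Prop_Met_1}, centered at a discrete set of points $z_{n+2}^i$ with spacing comparable to $|z_{n+2}^i|^{q/p}$. Let $\{\chi_0\} \cup \{\chi_i\}$ be a partition of unity subordinate to this cover, chosen so that $|\nabla^k \chi_\alpha| \lesssim (\rho w)^{-k}$, which is the natural length scale of the fiber metric in each chart; let $\widetilde{\chi}_\alpha$ be cutoffs with $\widetilde{\chi}_\alpha\chi_\alpha=\chi_\alpha$ and compatible bounds.

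Next, using the diffeomorphisms from Proposition \ref{S_2_Prop_Met_1}, I would transplant the inverses: on $\Omega_0$, pull back $P_0$ from $\mathbb{C}\times V_0$; on each $U_i$, rescale by $|z_{n+2}^i|^{-q/p}$ to land in $\mathbb{C}\times V_1$, apply $P_1$, and scale back. The weight functions $\rho$ and $w$ are designed precisely so that $C^{k,\alpha}_{\delta,\tau}(X)$ restricted to $\Omega_0$ corresponds to $C^{k,\alpha}_{\delta_0,\tau}(\mathbb{C}\times V_0)$ and restricted to $U_i$ corresponds, after scaling, to $C^{k,\alpha}_\tau(\mathbb{C}\times V_1)$ with a uniformly bounded factor depending on $|z_{n+2}^i|^{2-\delta}$. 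Thus each local inverse $\widetilde{P}_\alpha$ is bounded uniformly in $\alpha$. Define
\begin{equation*}
    P f \;=\; \chi_0 \,\widetilde{P}_0(\widetilde{\chi}_0 f) \;+\; \sum_i \chi_i \, \widetilde{P}_i(\widetilde{\chi}_i f).
\end{equation*}

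Then I would compute $\Delta_{g_{\phi'}} P f - f$. Two types of error arise. The first is from commutators $[\Delta_{g_{\phi'}},\chi_\alpha]$, which contribute terms containing $\nabla \chi_\alpha \cdot \nabla \widetilde{P}_\alpha(\widetilde{\chi}_\alpha f)$ and $(\Delta \chi_\alpha)\widetilde{P}_\alpha(\widetilde{\chi}_\alpha f)$; both gain a factor of $(\rho w)^{-1}$ or $(\rho w)^{-2}$ and, since the cutoffs transition only in the overlap regions at infinity, these are $O(A^{-\epsilon})$-small in $C^{0,\alpha}_{\delta-2,\tau-2}$ when $A$ is large. The second type is from $\Delta_{g_{\phi'}} - \Delta_{\text{model}}$ in each chart: by Proposition \ref{S_2_Prop_Met_1} this difference is $O(\epsilon)$ in $C^{0,\alpha}_{0,0}$ on $\Omega_0$ and in the rescaled norm on each $U_i$, provided $A,\Lambda,B$ are chosen large in the prescribed order. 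Hence $\Delta_{g_{\phi'}} P = \mathrm{Id} + E$ with $\|E\|_{\mathrm{op}} \le \frac{1}{2}$. Inverting by Neumann series, $P(\mathrm{Id}+E)^{-1}$ is the desired bounded right inverse.

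The main obstacle is the bookkeeping in the overlap regions: one must verify that the weighted H\"older norms on $X$ translate correctly to the model norms uniformly across all $U_i$ (where $|z_{n+2}^i|$ ranges over arbitrarily large values), and that the commutator loss from the partition of unity is compensated by one power of the weight $w$ (for $\tau<0$ close to $0$) and one power of $\rho$ (for $\delta<2q/p$). This is why the double-weighted setup, and the fact that $\rho w$ is the natural fiber scale in both model regimes, is essential; with the weight ranges stated above the geometric smallness can be made to dominate, allowing the Neumann inversion to close.
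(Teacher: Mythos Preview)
Your overall strategy---glue the model inverses $P_0$ and $P_1$ via cutoffs adapted to the conical/fibered decomposition of $\{\rho>A\}$, then correct by a Neumann series---is exactly the paper's approach. However, your estimate of the commutator error has a gap.

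You assert that the terms $\nabla\chi_\alpha\cdot\nabla\widetilde P_\alpha(\widetilde\chi_\alpha f)$ and $(\Delta\chi_\alpha)\widetilde P_\alpha(\widetilde\chi_\alpha f)$ are $O(A^{-\epsilon})$-small in $C^{0,\alpha}_{\delta-2,\tau-2}$ because the cutoffs ``transition only in the overlap regions at infinity''. This is false for the radial cutoff $\chi_0$ that separates $\Omega_0=\{R>\Lambda^{3/4}\rho^{q/p}\}$ from the fibered region. Its gradient is supported on $\{R\sim\Lambda\rho^{q/p}\}$, which meets \emph{every} level set $\rho=\mathrm{const}$; there $|\nabla\chi_0|\sim(\rho w)^{-1}$ and $|\nabla P_0(\widetilde\chi_0 f)|\lesssim\rho^{\delta-1}w^{\tau-1}\|f\|$, so the product is $\lesssim\rho^{\delta-2}w^{\tau-2}\|f\|$ and its $C^{0}_{\delta-2,\tau-2}$ norm is $O(1)$, independent of $A$. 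Taking $A$ large buys nothing here, because the weighted norm is scale-invariant in exactly this sense.

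The paper (following Sz\'ekelyhidi) repairs this by using \emph{logarithmic} outer cutoffs: the inner cutoff $\beta_1=\gamma_1(R\Lambda^{-1}\rho^{-q/p})$ applied to $f$ is standard, but the outer cutoff multiplying $P_0(\beta_1 f)$ is
\[
\widetilde\beta_1=\gamma_1\!\left(\frac{\ln(R\Lambda^{-1/2}\rho^{-q/p})}{\ln(\Lambda^{1/4})}\right),
\]
which transitions over the dyadic range $R\in(\Lambda^{1/2}\rho^{q/p},\Lambda\rho^{q/p})$ and hence satisfies $|\nabla\widetilde\beta_1|\lesssim(\ln\Lambda)^{-1}(\rho w)^{-1}$. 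This extra $(\ln\Lambda)^{-1}$ is the missing smallness. For the base-direction cutoffs $\chi_i$ one instead gets a factor $B^{-1}\Lambda^{2}$ (since $\xi\lesssim\Lambda^{2}$ on $U_i$), which is why $B$ must be chosen large after $\Lambda$. With these two adjustments your argument closes exactly as written; the smallness parameters are $(\ln\Lambda)^{-1}$, $B^{-1}\Lambda^{2}$, and the $\epsilon$ from Proposition~\ref{S_2_Prop_Met_1}, not $A^{-\epsilon}$.
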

			\begin{proof}
				The proof is very similar to that of lemma \ref{S_4_Lemma_Laplacian_1}, we sketch the proof here. Let $\gamma_{i},i=1,2$ be smooth function on $\R$ such that $\gamma_{1}+\gamma_{2}=1$ and
					\begin{equation*}
						\gamma_{1}(s)=
							\begin{cases}
								1 & s>2\\
								0 & s<1
							\end{cases}.
					\end{equation*}
				Define the cutoff functions
					\begin{equation*}
						\begin{split}
							&\beta_{i}=\gamma_{i}(R_{\tilde{z}}\Lambda^{-1}\rho^{-q/p}), i=1,2;\\
							&\widetilde{\beta_{1}}=\gamma_{1}\bigg(\frac{\ln(R_{\tilde{z}}\Lambda^{-1/2}\rho^{-q/p})}{\ln(\Lambda^{1/4})}\bigg);\\
							&\widetilde{\beta_{2}}=\gamma_{2}\bigg(\frac{\ln(R_{\tilde{z}}2^{-1}\rho^{-q/p})}{\ln(\Lambda)}\bigg).
						\end{split}
					\end{equation*}
				And let $\chi_{i},\widetilde{\chi_{i}}$ be cutoff functions such that
					\begin{equation*}
						\begin{split}
							&\chi_{i}\equiv 1 \text{ on } |z_{n+2}-z_{0}^{i}|<B|z_{0}^{i}|^{q/p};\\
							&\chi_{i}\equiv 0 \text{ on } |z_{n+2}-z_{0}^{i}|>2B|z_{0}^{i}|^{q/p};\\
							&\widetilde{\chi_{i}}\equiv 1 \text{ on } |z_{n+2}-z_{0}^{i}|<2B|z_{0}^{i}|^{q/p};\\
							&\widetilde{\chi_{i}}\equiv 0 \text{ on } |z_{n+2}-z_{0}^{i}|>3B|z_{0}^{i}|^{q/p}.
						\end{split}
					\end{equation*}
				Define the approximate right inverse as in Proposition 23 in \cite{szekelyhidi2019degenerations}
					\begin{equation*}
						P'(f)=\widetilde{\beta_{1}}P_{0}(\beta_{1}f)+\sum_{i}\widetilde{\chi_{i}}|z_{0}^{i}|^{2q/p}P_{1}(\chi_{i}\beta_{2}f).
					\end{equation*}
				And then use Proposition \ref{S_2_Prop_Met_1} to show that $\Delta P'$ is close to identity as operator, we can then define the right inverse $P=P'(\Delta P')^{-1}$ on the region $\{\rho>A\}$.
			\end{proof}

	\subsection{Perturb into Calabi-Yau metric}\label{S_2_3}
		On the region $\{\rho>A\}$ we can linearize the Monge-Amp\`ere operator
			\begin{equation*}
				\mathscr{F}: C^{0,\alpha}_{\delta-2,\tau-2}(\{\rho>A\})\to C^{0,\alpha}_{\delta-2,\tau-2}(\{\rho>A\}), f\mapsto\frac{(\omega_{\phi'}+\II \p \pb Pf)}{\II^{(n+1)^{2}}\Omega_{X}\wedge \overline{\Omega_{X}}}-1.
			\end{equation*}
		Once $A$ is sufficiently large, there exist an unique solution $f$ such that $\mathscr{F}(f)=0$ and $\|f\|_{C^{0,\alpha}_{\delta-2,\tau-2}}<\epsilon_{0}$, for some sufficiently small $\epsilon_{0}$. Now we define another K\"ahler potential $\phi''$ such that it is exactly Calabi-Yau on the region $\{\rho>2A\}$
			\begin{equation*}
				\phi''=\phi'+\gamma_{1}(\rho/(2A))Pf
			\end{equation*}
			\begin{lemma}\label{S_2_Lemma_SOB}
				$(X,g_{\phi''})$ is of $SOB(2n+2)$, and admits a $C^{3,\alpha}$ quasi-altlas.
			\end{lemma}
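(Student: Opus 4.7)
The plan is to verify Hein's SOB($2n+2$) axioms (polynomial volume upper bound, quadratically decaying Ricci lower bound, and relatively-connected-annuli) together with the existence of a $C^{3,\alpha}$ quasi-atlas, region by region. The key input is the $C^{k,\alpha}$ comparison of $g_{\phi'}$ with the two product models, $g_{\CC\times V_{0}}$ on $R_{\mathrm{cone}}:=\{\rho>A,\ R_{\tilde z}>\Lambda^{3/4}\rho^{q/p}\}$ and $|z_{0}^{i}|^{2q/p}g_{\CC\times V_{1}}$ on $U_{i}$, established in Proposition \ref{S_2_Prop_Met_1}; the correction $\phi''-\phi' = \gamma_{1}(\rho/2A)Pf$ adds a $C^{2,\alpha}_{\delta,\tau}$-small perturbation that does not affect any of these bounds at leading order.

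For the Ricci and volume conditions, $\phi''$ is Calabi-Yau on $\{\rho>2A\}$ so $\mathrm{Ric}(g_{\phi''})=0$ there, while on the compact $\{\rho\leq 2A\}$ it is a smooth K\"ahler metric with bounded Ricci; this immediately gives $\mathrm{Ric}(g_{\phi''})\geq -Cr^{-2}g_{\phi''}$ (indeed $\geq 0$ outside a compact set). The volume upper bound follows because on $R_{\mathrm{cone}}$ the metric is uniformly equivalent to the cone $\CC\times V_{0}$, whose volume grows exactly like $r^{2n+2}$, whereas each fiber region $U_{i}$ contributes $\lesssim r^{2}\cdot (r^{q/p})^{2n} = r^{2+2nq/p}$, of strictly lower order since $q<p$. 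The relatively-connected-annuli axiom is inherited from the tangent cone $\CC\times V_{0}$, whose cross-section is connected so spherical shells at large radii are path-connected, and Proposition \ref{S_2_Prop_Met_1} transports this to $g_{\phi''}$.

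For the quasi-atlas, I define a local scale function $\sigma: X \to \R_{>0}$ by $\sigma \equiv 1$ on the compact interior, $\sigma = \rho w$ on $R_{\mathrm{cone}}$, and $\sigma = |z_{0}^{i}|^{q/p}$ on $U_{i}$, with smooth interpolation in the transition zones; $\sigma$ is uniformly Lipschitz since $\rho$ and $w$ are. At each $x$ I build a chart $\Phi_{x}: B_{\mathrm{Euc}}(0,c) \to X$ whose image is a $g_{\phi''}$-ball of radius comparable to $\sigma(x)$, by composing Euclidean coordinates on the relevant smooth part of the model space (the cone $\CC \times V_{0}^{\mathrm{reg}}$ away from its singular $S^{1}$ on $R_{\mathrm{cone}}$, or $\CC\times V_{1}$ on $U_{i}$) with the comparison diffeomorphism underlying Proposition \ref{S_2_Prop_Met_1}. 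That proposition yields $C^{k,\alpha}$-closeness of $\sigma(x)^{-2}\Phi_{x}^{*}g_{\phi'}$ to the Euclidean metric on the unit ball, and the $C^{2,\alpha}_{\delta,\tau}$-small correction $\gamma_{1}Pf$ preserves this in $C^{2,\alpha}$. Bounded overlap multiplicity follows from the Lipschitz property of $\sigma$.

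The main obstacle is upgrading the chart-level closeness from $C^{2,\alpha}$ to $C^{3,\alpha}$ uniformly across all scales $\sigma(x)$. I would handle this by interior Schauder regularity for the Monge-Amp\`ere equation $\omega_{\phi''}^{n+1} = \II^{(n+1)^{2}}\Omega_{X}\wedge \overline{\Omega_{X}}$ on each chart, noting that in the scaled coordinates the right-hand side is smooth and has bounded derivatives of all orders. The Schauder constants are uniform because each model geometry has bounded geometry at its own intrinsic scale, the scaling factor is entirely absorbed into $\sigma$, and the $C^{k,\alpha}$ comparison in Proposition \ref{S_2_Prop_Met_1} holds for every $k$ with quantitative smallness, which supplies the uniform $C^{3,\alpha}$ control required for the quasi-atlas.
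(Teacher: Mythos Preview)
Your proposal is correct and follows essentially the same approach the paper invokes (the paper simply cites Sz\'ekelyhidi's argument and sketches that the cone region $\CC\times V_{0}$ dominates the volume while the quasi-atlas comes from the model comparisons). One minor remark: your list of the SOB axioms omits the \emph{lower} volume bound $\mathrm{Vol}\big(B(x,(1-C^{-1})r(x))\big)\geq C^{-1}r(x)^{2n+2}$, but the same cone-comparison that gives the upper bound gives this as well, so the argument goes through unchanged.
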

			\begin{proof}
				The proof is identical to Page 2691 in \cite{szekelyhidi2019degenerations}. The idea for $SOB(2n+2)$ is that the model space $\CC\times V_{0}$ takes up most of the volume in $X$, and $X$ admitting a $C^{3,\alpha}$ charts follows naturally by estimating the biholomorphisim and the metric on each model geometries.
			\end{proof}
		
		The K\"ahler form $\omega_{\phi''}$ is Calabi-Yau on $\{\rho>2A\}$, and therefore the Ricci potential has compact support. Use the technique in Chapter 4 \cite{hein2010gravitational}, we are able to produce a solution to the Monge-Amp\`ere equation
			\begin{equation}\label{S_2_Eqn_MA}
				(\omega_{\phi''}+\II \p \pb u)^{n+1}=e^{h_{Ric}}\omega_{\phi''}^{n+1}, u \in C^{4,\alpha}(X),
			\end{equation}
		by taking a convergent subsequence of $u_{\epsilon}$, which solve $\epsilon$-perturbed equation
			\begin{equation}\label{S_2_Eqn_PMA}
				(\omega_{\phi''}+\II \p \pb u_{\epsilon})^{n+1}=e^{h_{Ric}+\epsilon u_{\epsilon}}\omega_{\phi''}^{n+1}.
			\end{equation}
		The key to the construction is to derive a uniform $C^{0}(X)$ bound for $|u_{\epsilon}|$, then uniform $C^{4,\alpha}(X)$ follows from the local estimate. However, to prove Theorem \ref{Thm_1_Detail}, a better estimate for $|u|$ is needed.
			\begin{lemma}\label{S_2_Lemma_MADecay}
				Suppose the manifold $X$ is of $SOB(\beta),\beta>2$. If $u$ is a solution to the Monge-Amp\`ere equation \ref{S_2_Eqn_MA} constructed in \cite{hein2010gravitational}, and $h$ has a compact support with $\sup \rho^{\mu}|h|<C_{0}$ for some $2<\mu<\beta$, then for every small positive constant $\epsilon'$, there exist a constant $C(\epsilon')$ such that
					\begin{equation*}
						|u|<C(\epsilon', C_{0})\rho^{2-\mu+\epsilon'}.
					\end{equation*}
			\end{lemma}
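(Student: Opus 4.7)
My plan is to combine Hein's weighted maximum principle on SOB manifolds with the nonlinear structure of the Monge-Amp\`ere equation. Since $u$ is obtained as a subsequential limit of $u_{\epsilon}$ solving the perturbed equation \ref{S_2_Eqn_PMA}, Hein's uniform bound provides $\|u\|_{L^{\infty}(X)}\leq M$, and interior Schauder estimates on unit-size quasi-coordinate balls from the $C^{3,\alpha}$ atlas of Lemma \ref{S_2_Lemma_SOB} upgrade this to uniform $C^{4,\alpha}$ control. Expanding the Monge-Amp\`ere equation \ref{S_2_Eqn_MA} via $\log\det(\mathrm{Id}+A)=\mathrm{tr}(A)+O(|A|^{2})$ rewrites it as
\begin{equation*}
    \Delta_{g_{\phi''}} u = h + N(\nabla^{2}u),\qquad |N(\nabla^{2}u)|\lesssim |\nabla^{2}u|^{2},
\end{equation*}
so outside $\mathrm{supp}(h)$ the equation is a quadratic nonlinear perturbation of Laplace's equation.

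The second step is the key weighted elliptic input, extracted from Chapter 4 of \cite{hein2010gravitational}: on an SOB$(\beta)$ manifold with $\beta>2$, any bounded solution $v$ of $\Delta v=f$ with $\sup\rho^{\mu}|f|\leq C_{0}$ and $2<\mu<\beta$ satisfies $|v|(x)\leq C(\epsilon',C_{0})\rho(x)^{2-\mu+\epsilon'}$ for any $\epsilon'>0$. This is established through a weighted $L^{2}$ energy inequality using the Sobolev inequality that follows from the SOB condition, followed by a Moser iteration passing from $L^{2}$ to $L^{\infty}$ on annular regions $\{A\leq\rho\leq 2A\}$ with normalized metric $A^{-2}g_{\phi''}$; the $\epsilon'$ loss arises from the need to stay away from borderline indicial exponents at infinity. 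The barrier $A\rho^{2-\mu+\epsilon'}$, which satisfies $\Delta(A\rho^{2-\mu+\epsilon'})\lesssim -A\rho^{-\mu+\epsilon'}$ under SOB with the appropriate sign, provides the comparison function in a Phragm\'en--Lindel\"of argument on expanding annuli.

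Finally, a short bootstrap closes the nonlinear problem. Applying the weighted estimate with source $h$ alone first gives a decay $|u|\lesssim\rho^{2-\mu+\epsilon'/2}$. Rescaling the interior Schauder estimate to balls of intrinsic radius $\sim\rho(x)$ (using the SOB quasi-atlas) yields $|\nabla^{2}u|(x)\lesssim\rho(x)^{-\mu+\epsilon'/2}$, hence $|N(\nabla^{2}u)|\lesssim\rho^{-2\mu+\epsilon'}$. Since $2\mu-\epsilon'>\mu$ for $\epsilon'$ small, the nonlinear term decays strictly faster than $h$, and re-applying the weighted estimate with source $h+N(\nabla^{2}u)$ absorbs $N$ into the same type of bound (possibly with exponent capped just below $2-\beta+\epsilon'$, which is even stronger than the claimed $2-\mu+\epsilon'$). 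Iterating a bounded number of times produces the desired estimate $|u|<C(\epsilon',C_{0})\rho^{2-\mu+\epsilon'}$.

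The main obstacle is the initial step of the bootstrap: to apply the weighted max principle we need $u$ to exhibit some initial decay, whereas the construction only supplies $u\in L^{\infty}$. This gap is bridged by first observing that Hein's iteration produces $u(x)\to 0$ as $\rho(x)\to\infty$ (after subtracting the constant value at infinity, which is zero by the normalization built into the construction), so that the barrier comparison on large annuli is valid. The secondary difficulty is ensuring that the quasi-coordinate Schauder rescaling interacts correctly with the weighted norms near the degenerate directions of $g_{\phi''}$; this is handled exactly as in the proof of the $SOB(2n+2)$ property in Lemma \ref{S_2_Lemma_SOB}, where the model geometries $\mathbb{C}\times V_{0}$ and $\mathbb{C}\times V_{1}$ provide uniform local Euclidean charts.
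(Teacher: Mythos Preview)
Your approach is genuinely different from the paper's, and the gap you yourself flag in the final paragraph is real and not bridged by the argument you offer.

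The paper does \emph{not} linearize. It works directly with the $\epsilon$-perturbed solutions $u_{\epsilon}$ and the exact identity
\[
(e^{h+\epsilon u_{\epsilon}}-1)\,\omega_{0}^{n+1}=\II\partial\bar\partial u_{\epsilon}\wedge T,\qquad T=\sum_{j=0}^{n}\omega_{0}^{j}\wedge\omega_{u_{\epsilon}}^{n-j}\geq 0.
\]
Multiplying by $\xi\,\zeta^{2}u_{\epsilon}|\zeta u_{\epsilon}|^{d-2}$ with $\zeta=\rho^{m}$ and integrating by parts, the positivity of $T$ gives the gradient term $\int\xi|\nabla|\zeta u_{\epsilon}|^{d/2}|^{2}$ a good sign, and the $\epsilon$-term $\int\xi\zeta(e^{\epsilon u_{\epsilon}}-1)\zeta u_{\epsilon}e^{h}|\zeta u_{\epsilon}|^{d-2}$ is also nonnegative. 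Feeding this into Hein's weighted Sobolev inequality on $SOB(\beta)$ launches a Moser iteration that yields $|u_{\epsilon}|\lesssim\rho^{(\beta-2)/d_{0}}$ for any $d_{0}>\frac{\beta-2}{\mu-2}$, uniformly in $\epsilon$; letting $\epsilon\to 0$ finishes. No a priori decay of $u_{\epsilon}$ is needed to start: the entire machine runs off the uniform $C^{2}$ bound and the sign structure of the Monge--Amp\`ere equation.

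Your linearized scheme $\Delta u=h+N(\nabla^{2}u)$ loses exactly this sign structure. From the uniform $C^{4,\alpha}$ bound you only know $|N(\nabla^{2}u)|\leq C$, so the source in your linear equation does not decay outside $\mathrm{supp}(h)$, and the weighted linear estimate from \cite{hein2010gravitational} gives nothing. Your proposed fix, that ``Hein's iteration produces $u(x)\to 0$ as $\rho(x)\to\infty$,'' is not something the construction in \cite{hein2010gravitational} actually delivers: it produces a uniform $L^{\infty}$ bound on $u_{\epsilon}$, not decay, and there is no ``normalization built into the construction'' forcing the limiting constant to vanish. Even granting $u\to 0$ with no rate, the barrier $A\rho^{2-\mu+\epsilon'}$ still fails to dominate, because on $\{\rho>R\}$ you would need $|\Delta u|=|N(\nabla^{2}u)|\lesssim\rho^{-\mu+\epsilon'}$, which is precisely what you are trying to prove. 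The bootstrap is therefore circular at its first step. The paper's direct nonlinear Moser iteration is not a stylistic choice but the mechanism that breaks this circularity.
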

			\begin{proof}
				Write $\omega_{0}=\omega_{\phi''}, \omega_{f}=\omega_{\phi''+f}$ for simplicity, and $g=g_{\omega_{0}}$ be the background metric. Modify the distance function $\rho=\sqrt{|z_{n+2}|^{2}+R_{\tilde{z}}^{2}}$ in a compact set, so that the modified function is greater than $1$. Also by definition  $|\nabla \rho|+\rho|\nabla^{2}\rho|<C$.  Theorem 1.2 in \cite{hein2011weighted} implies: there exists a constant $C$ and $s\in [1,\frac{2n+2}{2n}]$
					\begin{equation}\label{S_2_Eqn_WeightedPoincare}
						\bigg(\int |f|^{2s}\rho^{s(\beta-2)-\beta}\bigg)^{\frac{1}{s}}<C\int |\nabla f|^{2}.
					\end{equation}
				The difference between the volume forms $\omega_{u_{\epsilon}}^{n+1}$ and $\omega_{0}^{n+1}$ is
					\begin{equation}\label{S_2_Eqn_VolumeDifference}
						(e^{h+\epsilon u_{\epsilon}}-1)\omega_{0}^{n+1}	=\II \p \pb u_{\epsilon}\wedge T=\frac{1}{2}\ud \ud ^{c}u_{\epsilon}\wedge T.
					\end{equation}
				where $T=\omega_{0}^{n+1}+\omega_{0}^{n}\wedge \omega_{u_{\epsilon}}+\cdots \omega_{u_{\epsilon}}^{n+1}$, and $\ud ^{c}=J\ud $. Let $\xi\in C^{\infty}_{0}(X)$ and $\zeta=\rho^{m}$ for some $m \in \R$.  Multiply \ref{S_2_Eqn_VolumeDifference} by $\xi(\zeta^{2} u_{\epsilon}|\zeta u_{\epsilon}|^{d-2}), d>1$ and integrate by part
					\begin{equation}\label{S_2_Ineqn_Int_1}
						\begin{split}
							&\int \xi|\nabla|\zeta u_{\epsilon}|^{\frac{d}{2}}|^{2}+\frac{(n+1)d^{2}}{2(d-1)}\int \xi\big(\zeta(e^{\epsilon u_{\epsilon}}-1)\zeta u_{\epsilon}e^{h}\big)\big|\zeta u_{\epsilon}\big|^{d-2}\\
							&< \frac{(n+1)d^{2}}{4(d-1)}\bigg(\int \xi\big|\zeta(e^{h}-1)\big|\big|\zeta u_{\epsilon}\big|^{d-1}+\frac{C|m|(|m|+1)d}{(d-1)}\int \xi\rho^{-2}|\zeta u_{\epsilon}|^{d}\\
							&+\frac{C(d-1)}{8d^{2}}\bigg|\int \zeta u_{\epsilon} |\zeta u_{\epsilon}|^{d-2}\ud ^{c}\zeta u_{\epsilon}\wedge \ud \xi\wedge T\bigg|\bigg).
						\end{split}
					\end{equation}
				Fix $\epsilon$ and $k\in \R$ sufficiently large and apply $\xi=\rho^{-k}\gamma_{2}(\rho/M), m=0$ to \ref{S_2_Ineqn_Int_1} and then let $M\to \infty$. Use the fact that $\|u_{\epsilon}\|_{C^{2}}$ is uniformly bounded, we conclude
					\begin{equation*}
						\int \rho^{-k}|u_{\epsilon}|^{d-2}|\nabla u_{\epsilon}|^{2}+\epsilon\rho^{-k}|u_{\epsilon}|^{d}< C_{k}(d)\bigg(\int \rho^{-k-2}|u_{\epsilon}|^{d}+\rho^{-k}|u_{\epsilon}|^{d-1}|e^{h}-1|\bigg).
					\end{equation*}
				Notice that when $M\to \infty$ the last term in \ref{S_2_Ineqn_Int_1} vanishes, and therefore \ref{S_2_Eqn_WeightedPoincare} implies:
					{\begin{small}
					\begin{equation}\label{S_2_Ineqn_Int_2}
						\bigg(\int |\zeta u_{\epsilon}|^{ds}\rho^{s(\beta-2)-\beta}\bigg)^{\frac{1}{s}}<\frac{Cd^{2}}{d-1}\bigg(\int |e^{h}-1|\zeta |\zeta u_{\epsilon}|^{d-1}+\frac{|m|(|m|+1)d}{d-1}\int \rho^{-2}|\zeta u_{\epsilon}|^{d}\bigg).
					\end{equation}
					\end{small}}
				Let $m=0, s=1$ and use H\"older inequality, as long as $d>\frac{\beta-2+\tilde{\epsilon}}{\mu-2}$, $\tilde{\epsilon}>0$
					\begin{equation*}
						\bigg(\int \rho^{-2}|u_{\epsilon}|^{d}\bigg)^{\frac{1}{d}}<\frac{C(C_{0},\tilde{\epsilon})d^{2}}{d-1}.
					\end{equation*}
				Fix $d_{0}>\frac{\beta-2+\tilde{\epsilon}}{\mu-2}$ and $1<s<\frac{2n+2}{2n}$, define $d_{k}=s^{k}d_{0}, \zeta_{k}=\rho^{s^{k}(\beta-2)-\beta}$, then \ref{S_2_Ineqn_Int_2} reads
					\begin{equation*}
						\bigg(\int \zeta_{k+1}|u_{\epsilon}|^{d_{k+1}}\bigg)^{\frac{1}{d_{k+1}}}<\bigg(\frac{C(C_{0},\tilde{\epsilon})d_{k}^{3}}{(d_{k}-1)^{2}}\bigg)^{\frac{1}{d_{k}}}\max\bigg\{1, \bigg(\int \zeta_{k}|u_{\epsilon}|^{d_{k}}\bigg)^{\frac{1}{d_{k}}}\bigg\}.
					\end{equation*}
				And by definition $\int \zeta_{0}|u_{\epsilon}|^{d_{0}}=\int \rho^{-2}|u_{\epsilon}|^{d_{0}}<\infty$. Iteration implies
					\begin{equation*}
						|u_{\epsilon}|<C(\tilde{\epsilon},C_{0})\rho^{(\beta-2)/d_{0}}<C(\epsilon',C_{0})\rho^{2-\mu+\epsilon'}
					\end{equation*}
				Let $\epsilon\to 0$ we prove the lemma.
			\end{proof}
	\subsection{Harmonic analysis on the Calabi-Yau metric}\label{S_2_4}
		We need to study the mapping properties for the Laplacian on the double weighted space: $C^{2,\alpha}_{\delta,\tau}(X,\omega_{\phi})$.
			
			\begin{prop}\label{MAP_LAPLACIAN}[c.f. \cite{li2019gluing}, Proposition 3.3]
				Let $\tau\in(2-2n,0)$ as above and $\delta>-2n$ and avoid the indicial roots, then the Laplacian is surjective
					\begin{equation*}
						\Delta_{g_{\phi}}: C^{2,\alpha}_{\delta,\tau}\to C^{0,\alpha}_{\delta-2,\tau-2}
					\end{equation*}
				and there exists a bounded right inverse $P_{X}$.
			\end{prop}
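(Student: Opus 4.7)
The plan is to build a parametrix for $\Delta_{g_{\phi}}$ by patching together the asymptotic right inverse from Proposition~\ref{S_2_Prop_Map_4} with a local elliptic inversion on the compact core, and then to correct the residual error. The essential input is Theorem~\ref{Thm_1_Detail}: $\phi - \phi' \in C^{2,\alpha}_{\delta,\tau}$, so the metrics $g_{\phi}$ and $g_{\phi'}$ agree at infinity up to a decaying perturbation.

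First I would upgrade Proposition~\ref{S_2_Prop_Map_4} to the doubly-weighted setting. The same patching construction as there, but now feeding in Proposition~\ref{S_2_Prop_Map_2} on the $\CC\times V_{0}$ model (with the $\delta$-weight) and Proposition~\ref{S_2_Prop_Map_3} on the $\CC\times V_{1}$ model, produces a bounded right inverse $P$ for $\Delta_{g_{\phi'}}$ from $C^{0,\alpha}_{\delta-2,\tau-2}(\{\rho>A\})$ to $C^{2,\alpha}_{\delta,\tau}(\{\rho>A\})$, provided $\delta>-2n$ avoids the indicial roots of $\CC\times V_{0}$ and $\tau\in(2-2n,0)$. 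Since $\phi-\phi'\in C^{2,\alpha}_{\delta,\tau}$ with $\delta<2q/p$, the operator $\Delta_{g_{\phi}}-\Delta_{g_{\phi'}}$ gains strictly extra decay, so its operator norm as a map $C^{2,\alpha}_{\delta,\tau}\to C^{0,\alpha}_{\delta-2,\tau-2}$ restricted to $\{\rho>A\}$ tends to $0$ as $A\to\infty$.

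Next I would pick a bounded smooth domain $\Omega\subset X$ containing $\{\rho\leq 3A\}$, and let $P_{\Omega}:C^{0,\alpha}(\Omega)\to C^{2,\alpha}(\Omega)$ be the Dirichlet solution operator for $\Delta_{g_{\phi}}$ (which exists by the maximum principle). Choose cutoffs $\beta_{1}+\beta_{2}=1$ with $\beta_{1}\equiv 1$ on $\{\rho>2A\}$ and $\mathrm{supp}(\beta_{2})\subset\{\rho<3A\}$, together with fatter cutoffs $\widetilde{\beta_{i}}\equiv 1$ on $\mathrm{supp}(\beta_{i})$, and set
\begin{equation*}
  P'(f) \;=\; \widetilde{\beta_{1}}\,P(\beta_{1}f)\;+\;\widetilde{\beta_{2}}\,P_{\Omega}(\beta_{2}f).
\end{equation*}
A direct computation gives $\Delta_{g_{\phi}}P'=\mathrm{Id}+E$, with $E=E_{\mathrm{pert}}+E_{\mathrm{comm}}$, where $E_{\mathrm{pert}}=\widetilde{\beta_{1}}(\Delta_{g_{\phi}}-\Delta_{g_{\phi'}})P(\beta_{1}\cdot)$ has small norm by the previous step, while $E_{\mathrm{comm}}$ collects the cutoff commutators $[\Delta_{g_{\phi}},\widetilde{\beta_{i}}]$ applied to $P(\beta_{1}\cdot)$ or $P_{\Omega}(\beta_{2}\cdot)$, all of which are supported in the fixed annulus $\{2A<\rho<3A\}$.

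The main obstacle is controlling $E_{\mathrm{comm}}$. Being supported in a fixed compact set and involving only first-order derivatives of a function bounded in $C^{2,\alpha}_{\delta,\tau}$, Schauder estimates together with Arzel\`a--Ascoli identify $E_{\mathrm{comm}}$ as a compact endomorphism of $C^{0,\alpha}_{\delta-2,\tau-2}(X)$. Thus $\mathrm{Id}+E$ is a compact perturbation of an operator with norm strictly less than $1$, hence Fredholm of index $0$. Surjectivity of $\Delta_{g_{\phi}}$ then reduces to triviality of the cokernel of $\mathrm{Id}+E$; this follows from a duality / integration-by-parts argument using that $(X,g_{\phi})$ is of $\mathrm{SOB}(2n+2)$ by Lemma~\ref{S_2_Lemma_SOB} (which supplies a weighted Poincar\'e inequality as in \cite{hein2011weighted}), together with the assumption that $\delta$ avoids the indicial roots --- the latter being precisely what excludes nonzero harmonic elements from the weighted spaces in question. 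Once $\mathrm{Id}+E$ is invertible, $P_{X}=P'(\mathrm{Id}+E)^{-1}$ is the desired bounded right inverse.
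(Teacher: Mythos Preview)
Your parametrix strategy is reasonable in outline, but the final step --- invertibility of $\mathrm{Id}+E$ --- is not adequately justified. Fredholm of index zero only gives $\dim\ker=\dim\operatorname{coker}$, so you still need to kill one of them. Your appeal to ``duality / integration by parts'' and ``$\delta$ avoids indicial roots'' is too vague: the dual of $C^{0,\alpha}_{\delta-2,\tau-2}$ is awkward to work in, and avoiding indicial roots controls the asymptotic expansion of harmonic functions, not the kernel of $\mathrm{Id}+E$ directly. Concretely, if $g\in\ker(\mathrm{Id}+E)$ then $u=P'(g)$ is harmonic in $C^{2,\alpha}_{\delta,\tau}$, but for $\delta>0$ such harmonic functions certainly exist (constants, for instance), and it is not obvious that they lie outside the range of $P'$. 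So the argument as written has a genuine gap at the last step.

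The paper takes a different and more direct route that sidesteps Fredholm theory entirely. After securing a right inverse for $\Delta_{g_{\phi}}$ on $\{\rho>A\}$ (as you also do), it reduces to solving $\Delta u=f$ for $f$ \emph{compactly supported} in $\{\rho<2A\}$. Such $f$ satisfies $|f|\le C\rho^{-\mu}$ for any $\mu<2(n+1)$, and since $(X,g_{\phi})$ is $\mathrm{SOB}(2n+2)$ with $|\nabla\rho|+|\rho\Delta\rho|$ bounded, Hein's existence theorem (Theorem~1.6 in \cite{hein2011weighted}, or Proposition~3.16 in \cite{hein2010gravitational}) produces a global solution $u\in C^{2,\alpha}$ with $|u|\le C_{\epsilon}\rho^{2-\mu+\epsilon}$. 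Taking $\mu$ close to $2(n+1)$ and using $w\le1$, $\tau<0$, $\delta>-2n$ gives $|u|\lesssim\rho^{\delta}w^{\tau}$; since $u$ is harmonic on $\{\rho>2A\}$, weighted Schauder estimates on the model geometries bootstrap this to $u\in C^{2,\alpha}_{\delta,\tau}$. The point is that Hein's theorem supplies existence with decay \emph{directly}, so no cokernel argument is needed. Your approach could likely be completed, but you would have to either prove injectivity of $\mathrm{Id}+E$ by hand or replace the Dirichlet piece with something like Hein's result --- at which point you have essentially recovered the paper's proof.
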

			\begin{proof}
				Similarly as in \ref{S_2_Prop_Map_4} for the region $\{\rho>A\}$, the Laplacian $\Delta_{g_{\phi}}$ admits a bounded right inverse outside a large compact set. Therefore it suffices to invert the function $f\in C^{0,\alpha}(X)$ supported in $\{\rho<2A\}$, which implies that $|f|<C\rho^{-\mu}$ for $2<\mu<2(n+1)$ automatically. We shall apply Theorem 1.6 in \cite{hein2011weighted} or Proposition 3.16 in \cite{hein2010gravitational}. Now our manifold $X$ is of $SOB(2(n+1))$, and the weight function $\rho$ satisfies
					\begin{equation*}
						|\nabla \rho|+|\rho\Delta \rho|<C'.
					\end{equation*}
				Then there is a solution in $C^{2,\alpha}$ to the Poisson equation
					\begin{equation*}
						\Delta u=f.
					\end{equation*}
				Moreover, $u$ satisfies $|u|<C_{\epsilon}\rho^{2-\mu+\epsilon}$ for some $\epsilon\ll 1$ and some constant $C_{\epsilon}$. Since the weight function $w\leq 1$
					\begin{equation*}
						|u|<C_{\epsilon}\rho^{\delta}w^{\tau}
					\end{equation*}
				Moreover $u$ is harmonic on $\{\rho>A\}$ and then we shall use elliptic bootstrap to get the $C^{2,\alpha}_{\delta,\tau}(X)$ estimate for $u$. 
			\end{proof}
			\begin{proof}[Proof of Theorem \ref{Thm_1_Detail}]
				Use Lemma \ref{S_2_Lemma_MADecay}, we obtain a solution to
					\begin{equation*}
						(\II \p \pb (\phi''+u))^{n+1}=\II^{(n+1)^{2}}\Omega_{X}\wedge\overline{\Omega_{X}},
					\end{equation*}
				with $|u|<C(\epsilon',C_{0})\rho^{-2n+\epsilon'}<C(\epsilon',C_{0})\rho^{-2}$. Use Evans-Krylov's regularity theory on each model geometries, we are able to derive the $C^{2,\alpha}_{\delta,\tau}(X)$ bound for $u$. Meanwhile, from the our construction
					\begin{equation*}
						\phi''-\phi'\in C^{2,\alpha}_{\delta,\tau}(X).
					\end{equation*}
				we conclude that the difference between the Calabi-Yau metric and the approximate solution $\phi-\phi'\in C^{2,\alpha}_{\delta,\tau}(X)$.\\
				
				To show the uniqueness, let $u \in C^{2,\alpha}_{\delta',\tau}(X)$, $\delta'<\frac{2q}{p}$, and without loss of generality, assume $\delta'$ avoid the indicial roots. The argument is a generalization of Theorem 3.1 in \cite{conlon2013asymptotically} to warped QAC Calabi-Yau case. By the assumption
					\begin{equation*}
						(\omega_{\phi}+\II \p \pb u)^{n+1}=\omega_{\phi}^{n+1}.
					\end{equation*}
				Therefore $(\Delta_{\phi} u)\omega_{\phi}^{n+1}=(\II \p \pb u)^{2}\wedge T$, where $T$ and its derivatives are bounded. We apply Theorem 1.2 in \cite{chiu2024subquadratic}, which shows that on a non-compact Calabi-Yau manifold with maximal volume growth, any harmonic function admitting subquadratic growth is pluriharmonic. Together with the surjectivity of the Laplacian in Proposition \ref{MAP_LAPLACIAN} and iteration argument, we conclude that $u=u'+u''$ where $u'$ is pluriharmonic and $u''$ decays to $0$ near the infinity. Strong maximum principle then implies that $u''=0$.
			\end{proof}

\section{Metric ansatz for the collapsing problem}\label{S_3}
	For each $t$, Theorem \ref{Thm_1_Detail} implies that there is a unique Calabi-Yau metric on $X_{t}'$
		\begin{equation}\label{S_3_Defn_X'}
			X_{t}': P(\tilde{z})+\prod_{i=1}^{d}\prod_{j=1}^{q_{i}}(z_{n+2}-s_{i}-a_{t}^{-\theta_{i}}s_{i,j})=0,
		\end{equation}
	approximated by
		\begin{equation*}
			\pi^{*}\omega_{\CC}+t^{\frac{2(q-p)}{p}}\omega_{SRF}=\II \p \pb \big(|z_{n+2}|^{2}+t^{\frac{2(q-p)}{p}}|P(\tilde{z}')|^{2/p}\varphi(P(\tilde{z}')^{-1/p}\cdot \tilde{z}')\big),
		\end{equation*}
	outside a compact set. However, little is known about the behavior of the metric near the singular fibers, since the Calabi-Yau metric is obtained by solving a Monge Amp\`ere equation. On the other hand, gluing construction allow us to gives a favorable description of the metric, such that the gluing error is small before perturbation. Moreover the uniqueness implies that those two Calabi-Yau metrics are the same.\\
	
	As mentioned before, one of the issue is to describe the geometry of $X_{t}'$ near the infinity as the collapsing parameter $t \to \infty$. The model in \ref{S_3_Defn_X'} is not convenient for this task, for the function $\sqrt{R_{\tilde{z}'}^{2}+|z_{n+2}|^{2}}$ is not uniformly equivalent to the distance function as $t\to \infty$. To handle this issue, consider a coordinate change
		\begin{equation*}
			\tilde{z}=a_{t}^{(p-q)/p}\cdot \tilde{z}'.
		\end{equation*}
	In this coordinate the hypersurface becomes
		\begin{equation*}
			X_{t}: P(\tilde{z})+a_{t}^{q-p}Q_{t}(z_{n+2})=0,
		\end{equation*}
	where the polynomial $Q_{t}(z_{n+2})=\prod_{i=1}^{d}\prod_{j=1}^{q_{i}}(z_{n+2}-s_{i}-a_{t}^{-\theta_{i}}s_{i,j})$. And the K\"ahler potential near the infinity can be rewritten as
		\begin{equation*}
			\widetilde{\phi'}=|z_{n+2}|^{2}+|P(\tilde{z})|^{2/p}\varphi(P(\tilde{z})^{-1/p}\cdot \tilde{z}).
		\end{equation*}
	The function $\sqrt{R_{\tilde{z}}^{2}+|z_{n+2}|^{2}}$ is uniformly equivalent to the distance function. In addition to this, as $t\to \infty$, $\widetilde{\phi'}$ is a K\"ahler potential outside small neighborhoods of the singularities, and is very closed to be Calabi-Yau.\\
	
	Another issue is the rate of the singular locus converging to $s_{i}$ in the base space, such that after blow-up the limits are $X_{i}$
		\begin{equation*}
			P(\tilde{v})+Q_{i}(v_{n+2})=0,
		\end{equation*}
	where $Q_{i}=\prod_{i\neq l}\prod_{j=1}^{q_{i}}(s_{i}-s_{l})^{q_{l}}(v_{n+2}-s_{i,j})$. In a small neighborhood of $(0,s_{i})$, $Q_{t}$ is approximately $Q_{i}$. Consider the coordinate change
		\begin{equation*}
			(\tilde{z},z_{n+2}-s_{i})=(a_{t}^{x}\cdot \tilde{v},a_{t}v_{n+2}),
		\end{equation*}
	such that the defining equation together with $\widetilde{\phi'}$ in the scaled coordinate takes the form of
		\begin{equation*}
			\begin{split}
				&X_{i}: P(\tilde{v})+Q_{i}(v_{n+2})=0,\\
				&\widetilde{\phi'}\approx t^{2x}(|v_{n+2}|^{2}+|P(\tilde{v})|^{2/p}\varphi(P(\tilde{v})^{-1/p}\cdot \tilde{v}))\approx t^{2x}\phi_{i}'.
			\end{split}
		\end{equation*}
	Computation shows that $x=-\theta_{i}=-\frac{p-q}{p-q_{i}}$ is the only solution. 	Let $(X_{i},\phi_{i})$ be the Calabi-Yau manifold with the K\"ahler potential $\phi_{i}$ approximated by $\phi_{i}'$. The above discussion suggests that we should glue $(X_{i}, t^{-2\theta_{i}}\phi_{i}), 1\leq i\leq d$ to small neighborhood near $S_{i}=(0,s_{i})$ respectively. Use implicit function theorem, we obtain biholomorphism $F_{i,t}$ from large open sets on $X_{i}$ to some small neighborhoods of $S_{i}$ in $X_{t}$. In addition, the holomorphic volume form:
		\begin{equation}\label{S_3_Defn_HolErr}
			\begin{split}
				t^{-(n+1)\theta_{i}}F_{i,t}^{*}\Omega_{X_{i}}&=(1+O'(|z_{n+2}-s_{i}|))\Omega_{X_{t}}.
			\end{split}
		\end{equation}
	Here $O'(|z_{n+2}|)$ represents for some holomorphic functions with all derivatives bounded by $|z_{n+2}|$.

	\subsection{Metric Ansatz and weighted H\"older space}\label{S_3_1}
		In order to set up the gluing construction, we now define the approximate solution. Recall that
			\begin{equation*}
				\gamma_{2}(s)=
					\begin{cases}
						1 & s<1 \\
						0 & s>2
					\end{cases}.
			\end{equation*}
		Define the K\"ahler potential as,
			\begin{equation}\label{S_3_Eqn_ApproxSol}
				\begin{split}
					\phi_{t}':=&\bigg(1-\sum_{i=1}^{d}\gamma_{2}(\rho_{i}t^{-\sigma_{i}})\bigg)\bigg(|z_{n+2}|^{2}+|P(\tilde{z})|^{2/p}\varphi(P(\tilde{z})^{-1/p}\cdot \tilde{z})\bigg)\\
					+&\sum_{i}^{d}\gamma_{2}(\rho_{i}t^{-\sigma_{i}})F_{i,t}^{*}(t^{-2\theta_{i}}\phi_{i}),	
				\end{split}
			\end{equation}	
		where $(q-p)/p<\sigma_{i}<0$ and $\rho_{i}^{2}=|Q_{t}|^{2/q_{i}}+R_{\tilde{z}}^{2}$. We will show that $\phi_{t}'$ is a K\"ahler potential by estimating the corresponding bilinear form. Before introducing the weighted H\"older space, we now describe the geometry of $(X_{t},\II \p \pb \phi_{t}')$. Roughly speaking $X_{t}$ can be divided into four regions:
			\subsubsection*{Region I}
				Let $A_{1}$ be a sufficiently large constant and the region I, $U_{1}:=\{R_{\tilde{z}}>A_{1}\}\cup \{|z_{n+2}|>A_{1}\}$. The geometry in this region is similar to the geometry near the infinity as in Section \ref{S_2}, the only difference is that near the singular ray the metric $g_{\phi_{t}'}$ is locally modeled on $g_{\CC}\oplus t^{2(q-p)/p}|z_{n+2}|^{2q/p}g_{V_{1}}$.
			\subsubsection*{Region II}
				$\rho_{i}=\sqrt{|Q_{t}|^{2/q_{i}}+R_{\tilde{z}}^{2}}$ are the distance functions to $S_{i}$ in small neighborhoods respectively, we define $U_{2,i}:=\{\rho_{i}<2t^{\sigma_{i}'}\}$, $(q-p)/p<\sigma_{i}<\sigma_{i}'<0$. We glue in $(X_{i},t^{-2\theta_{i}}\phi_{i})$ to $U_{2,i}$. The image of the gluing map $F_{i,t}(U_{2,i})$ are roughly the balls of radius $2t^{\theta_{i}+\sigma_{i}'}$ in the model spaces $(X_{i},\phi_{i})$, which will exhaust $X_{i}$ as $t\to \infty$. Define $\rho_{i}'=t^{\theta_{i}}\rho_{i}$ be the distance function for $g_{\phi_{i}}$. In this region the error to the Ricci potential falls into two category: the first one is due the the difference between $\phi_{i}'$ and $\phi_{i}$, which decays as $\rho_{i}'$ approach to $\infty$, while the second one is due to the biholomorphism $F_{i,t}$ in \ref{S_3_Defn_HolErr}, which grow at the rate of $\rho_{i}'$. Our treatment to this phenomenon here is different from the one in \cite{li2019gluing}, namely the gluing regions are relatively smaller, and in some cases, for example $q_{1}=\cdots =q_{d}$ we can choose $\sigma_{i}<-\frac{1}{2}\theta_{i}$, and we can obtain a weighted $C^{1,\alpha}$ estimate for the metric while keeping the Ricci potential small in the weighted H\"older space. In general we only have weighted $C^{0,\alpha}$ estimate for the metric, but we can still bound the difference of the K\"ahler Laplacian. Although the above construction comes at the price that we will have a slightly coarser estimate in region III, it doesn't affect the gluing construction essentially.
			\subsubsection*{Region III}
				In this region $U_{3}:=\{R_{\tilde{z}}<2t^{(q-p)/p}\Lambda_{2}, |z_{n+2}|<2A_{1}, \rho_{i}>t^{\sigma_{i}'}\}$, where $\Lambda_{2}\gg A_{1}$. The metric behaves like the collapsing Calabi-Yau metric on a compact Calabi-Yau manifold fibered over $\mathbb{CP}_{1}$ \cite{li2019gluing}. The condition $\sigma_{i}'>\frac{q-p}{p}$ guarantee that $\{|z_{n+2}|<2A_{1}, \rho_{i}>t^{\sigma_{i}'}\}\subset \CC$ can be covered by $N(t)\sim t^{\frac{p-q}{p}}$ many balls of radius $2B_{2}t^{(q-p)/p}$. The metric on $\{R_{\tilde{z}}<2t^{(q-p)/p}\Lambda_{2}, |z_{n+2}-v_{0}|<2B_{2}t^{(q-p)/p}\}$, can be approximated by a semi-Ricci-flat metric $t^{2(q-p)/p}g_{\CC\times V_{Q_{t}(v_{0})}}$, here $B_{2}$ is a large constant depends on $\Lambda_{2}$.

			\subsubsection*{Region IV}
				This region $U_{4}:=\{t^{(q-p)/p}\Lambda_{2}<R_{\tilde{z}}<2A_{1}, |z_{n+2}|<2A_{1}, \rho_{i}>t^{\sigma_{i}'}\}$ is the transition region between $U_{1}$ with $U_{2}, U_{3}$, and the metric is modeled on $\CC\times V_{0}$, whose error depends only on $\Lambda_{2}, A_{1}$.\\
			
		With the understanding of the geometry of $(X_{t},\phi_{t}')$ above, we are now able to define our H\"older space for the gluing problem.
			\begin{definition}\label{S_3_Defn_Holder}
				Denote $\bs{\delta}$ as the set of index $(\delta_{0},\delta_{1},\cdots,\delta_{d})$, and $\bs{\delta-m}$ as $(\delta_{0}-m,\delta_{1}-m,\cdots ,\delta_{d}-m)$. Let $f$ be a function, define the weighted H\"older space $C^{k,\alpha}_{\bs{\delta},\tau,t}$ complying with the geometry on region I-IV as follow
					\begin{equation*}
						\begin{split}
							\|f\|_{C^{k,\alpha}_{\bs{\delta},\tau,t}}:=&A^{\delta_{0}-\tau}\|f\|_{C^{k,\alpha}_{\delta_{0},\tau,t}(U_{1})}+\sum_{i=1}^{d}t^{(\delta_{i}-\tau)\sigma_{i}'}\|f\|_{C^{k,\alpha}_{\delta_{i},\tau}	(U_{2,i})}\\
							&+\|f\|_{C^{k,\alpha}_{\tau,t}(U_{3})}+\|f\|_{C^{k,\alpha}_{\tau}(U_{4})}.
						\end{split}
					\end{equation*}
			\end{definition}
		We shall define each terms individually. In region I, we define the weight functions similarly as in \ref{S_2_Defn_W}. Let $\rho_{0}^{2}=|z_{n+2}|^{2}+R_{\tilde{z}}^{2}$ and
			\begin{equation}\label{S_3_Defn_W0}
				w_{0}=
					\begin{cases}
						1 & R>2\kappa \rho_{0}\\
						\frac{R_{\tilde{z}}}{\kappa \rho_{0}} & \kappa\rho_{0}>R_{\tilde{z}}>2t^{(q-p)/p}\kappa^{-1}\rho_{0}^{q/p}\\
						t^{(q-p)/p}\kappa^{-2}\rho_{0}^{(q-p)/p} & R_{\tilde{z}}<t^{(q-p)/p}\kappa^{-1}\rho_{0}^{q/p}
					\end{cases}.
			\end{equation}
		The rest of the definition for the double weighted space $C^{k,\alpha}_{\delta_{0},\tau,t}(U_{1})$ is the same as in \ref{S_2_Defn_Holder_1}, \ref{S_2_Defn_Holder_2}. Here, we should point out that $t$-dependent weights are design to handle the issue that a large sphere in $X_{t}$ will develop a circle of singularities as $t \to \infty$. Meanwhile it recovers a distance function on $V_{1}$. Recall that $\xi$ is a function on $V_{1}: P(\tilde{z})+1=0$, and is a smoothing for $\max\{\kappa^{-1}, R_{\tilde{z}}\}$ and we define
			\begin{equation*}
				\xi_{a}:=|a|^{1/p}S_{a}^{*}\xi,
			\end{equation*}
		where the map $S_{a}$ is given by
			\begin{equation*}
				S_{a}: V_{a}\to V_{1}, \tilde{z}\mapsto a^{-1/p}\cdot \tilde{z}.
			\end{equation*}The function $\xi_{a}$ is a distance function on $P(\tilde{z})+a=0$, which is uniformly equivalent to $R_{\tilde{z}}$ when $\xi_{a}$ is large.
		Now by definition $\rho_{0}w_{0}$ is uniformly equivalent to $\xi_{a_{t}^{q-p}Q_{t}}$, here we treat $\kappa$ as a constant.\\
		
		On region II, we the use the scaled metric $t^{2\theta_{i}}g_{\phi_{t}'}$ to define the weighted H\"older space on $U_{2,i}$. Here $(\tilde{v},v_{n+2})$ for the scaled coordinate in $\CC^{n+2}$. Recall that $\rho_{i}'=t^{\theta_{i}}\rho_{i}$
			\begin{equation}\label{S_3_Defn_WI}
				w_{i}=
					\begin{cases}
						1 & R_{\tilde{v}}> 2\kappa \rho_{i}'\\
						\frac{R_{\tilde{v}}}{\kappa \rho_{i}'} & \kappa \rho_{1}'>R_{\tilde{v}}>2\kappa^{-1}(\rho_{i}')^{q_{i}/p}\\
						\kappa^{-2}(\rho_{i}')^{q_{i}/p-1} & R_{\tilde{v}}<\kappa^{-1}(\rho_{1}')^{q_{i}/p}
					\end{cases}.
			\end{equation}
		Notice that the norm $\|\cdot\|_{C^{k,\alpha}_{0,0}}(U_{2,i})$ is invariant under scaling, therefore we now define
			\begin{equation*}
				t^{(\delta_{i}-\tau)\sigma_{i}'}\|f\|_{C^{k,\alpha}_{\delta_{i},\tau}(U_{2,i})}:= t^{\delta_{i}(\sigma_{i}'+\theta_{i})-\tau\sigma_{i}'}\|f\|_{C^{k,\alpha}_{\delta_{i},\tau}(F_{i}(U_{2,i}),t^{2\theta_{i}}g_{\phi_{t}'})},
			\end{equation*}
		where $\|\cdot \|_{C^{k,\alpha}_{\delta_{i},\tau}(F_{i}(U_{2,i}),t^{2\theta_{i}}g_{\phi_{t}'})}$ are defined similarly as in the double weighted H\"older space on $(X_{i}, g_{\phi_{i}'})$. It should be remarked that those $t$ factors are designed to counteract the effect introduced by $(\rho_{i}')^{-\delta_{i}}w_{i}^{-\tau}$ under the scaling.\\ 
		
		In the last step, define
			\begin{equation*}
				\|f\|_{C^{k,\alpha}_{\tau,t}(U_{3})}:=\|\xi_{a_{t}^{q-p}Q_{t}}^{-\tau}f\|_{C^{k,\alpha}(\xi_{a_{t}^{q-p}Q_{t}}^{-2}g_{\phi_{t}})},
			\end{equation*}
		and
			\begin{equation*}
				\|f\|_{C^{k,\alpha}_{\tau}(U_{4})}:=\|R_{\tilde{z}}^{-\tau}f\|_{C^{k,\alpha}(R_{\tilde{z}}^{-2}g_{\phi_{t}})},
			\end{equation*}
		as the weighted H\"older norm on $U_{3},U_{4}$. Our weight functions are chosen so that they are uniformly equivalent on the overlap, in the sense that the bound are independent on the collapsing parameter $t$. It should be remarked that the H\"older norm $C^{k,\alpha}_{\bs{0},0,t}$ is invariant under scaling. Let $T$ be a tensor of the type $(r,s)$
			\begin{equation*}
				T\in \Gamma\big(\bigotimes^{r}TX\otimes \bigotimes^{s}T^{*}X \big)
			\end{equation*}
		we can define the H\"older norm similarly, but we need to consider the weighted H\"older norm for $t^{(s-r)\theta_{i}}T$ under the scaled metric in region II.
		
	\subsection{Metric approximation and Ricci potential}\label{S_3_2}
		In the previous section we set up the weighted H\"older space for the gluing construction. However for some technical issue, we are only able to show that the Ricci potential $h_{t}$ for the K\"ahler metric $(X_{t},\omega_{\phi_{t}'})$ is small given that $p,q_{i}$ satisfy a numerical condition. The Proposition \ref{S_2_Prop_Ricci_1} on the decay rate of the Ricci potential to the approximate solution on $X_{i}$ is crucial in our gluing construction, in the sense that we rely on the optimal estimate for the K\"ahler potential.
			\begin{prop}\label{S_3_Prop_Ricci}
					Suppose $p>2q_{i}$ and if $q_{1}<\frac{1}{4}p<q_{d}$, suppose further that $\frac{p+4q_{1}}{5p-4q_{1}}>\frac{2q_{d}}{3p-2q_{d}}$. Take $\alpha,\tau, \sigma_{i}'-\sigma_{i}$ close to $0$ and $\delta_{_{i}},1\leq i\leq d$ sufficiently close to $\frac{2q_{i}}{p}$, then there exists a positive constant $a$ such that the Ricci potential satisfies
						\begin{equation*}
							\|h_{t}\|_{C^{0,\alpha}_{\bs{\delta-2},\tau-2,t}}\lesssim t^{-(2-\tau)(p-q)/p-a}.
						\end{equation*}
					Moreover if a function $f$ satisfies
						\begin{equation*}
							\|f\|_{C^{0,\alpha}_{\bs{\delta-2},\tau-2,t}}\lesssim t^{-(2-\tau)(p-q)/p-a},
						\end{equation*}
					then $f$ is small in the scaled models in region I-IV as $t\to \infty$. To be specific:
						\begin{equation*}
							\|f\|_{C^{0,\alpha}_{\bs{0},0,t}}\lesssim \epsilon(t),
						\end{equation*}
					where $\epsilon(t)$ denotes the terms bounded by $t^{-\epsilon}$ for some positive $\epsilon$.
			\end{prop}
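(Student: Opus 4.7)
The plan is to split $X_t$ into the four regions $U_1,U_{2,i},U_3,U_4$ from Section \ref{S_3_1} and estimate $h_t$ on each separately, then combine the pieces via the definition of $\|\cdot\|_{C^{0,\alpha}_{\bs{\delta-2},\tau-2,t}}$. On $U_1$ the ansatz agrees with the Calabi-Yau ansatz $\widetilde{\phi'}$ from Section \ref{S_2}, so Proposition \ref{S_2_Prop_Ricci_1} applies verbatim; the only novelty is that the defining polynomial is $a_t^{q-p}Q_t$, which pulls out a global factor $t^{2(q-p)/p}$ after integrating against $\II\ud Q_t\wedge \ud\overline{Q_t}$. Choosing $\delta_0$ slightly above $\frac{2(2q-p)}{p}$ and $\tau$ close to $0$ then yields a bound compatible with the target rate.

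On each $U_{2,i}$ one pulls back through $F_{i,t}^{*}$ and works on the rescaled model $(X_i,g_{\phi_i})$. The Calabi-Yau potential $\phi_i$ has Ricci potential zero, but the ansatz uses $\phi_i'$ on the annular overlap $\rho_i t^{-\sigma_i}\in(1,2)$; the cutoff error contains $\II\p\pb\gamma_2\cdot(\phi_i-\phi_i')$ and the weighted decay of $\phi_i-\phi_i'$ furnished by Theorem \ref{Thm_1_Detail} gives the right rate. The remaining contribution is the distortion of the volume form by $F_{i,t}$: by \eqref{S_3_Defn_HolErr} this is a multiplicative factor $1+O'(t^{-\theta_i}\rho_i')$, whose weighted norm on $F_{i,t}(U_{2,i})$ stays small once $\delta_i$ is taken sufficiently close to $2q_i/p$. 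On $U_3$ the metric is modeled on $g_\CC\oplus t^{2(q-p)/p}|Q_t(v_0)|^{2q/p}g_{V_1}$, a fiberwise Calabi-Yau, so $h_t$ there comes only from the variation of fiber volumes in the base direction; on $U_4$ the model is the Ricci-flat cone $\CC\times V_0$ and the error depends only on $\Lambda_2,A_1$.

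The main obstacle is to check that all four contributions simultaneously beat $t^{-(2-\tau)(p-q)/p}$ by a strictly positive excess $a$. The most delicate terms are the cutoff error in region II (which scales like a power of $t^{\sigma_i-\sigma_i'}$) and, when $q_1<p/4<q_d$, the tension between the small $q_1$ forcing a narrow gluing radius at $S_1$ and the large $q_d$ demanding a wider one at $S_d$: no uniform choice of $\tau,\sigma_i,\sigma_i',\delta_i$ works without restricting $p,q_i$. The condition $p>2q_i$ ensures that $w^{\tau-2}$ in region I is integrated against an error of the right size; the finer inequality $\frac{p+4q_1}{5p-4q_1}>\frac{2q_d}{3p-2q_d}$ is precisely what makes the region-II constraints at $S_1$ and $S_d$ compatible, allowing $\sigma_i'-\sigma_i$, $\alpha$, $\tau$ to be pushed close to $0$ and $\delta_i$ close to $2q_i/p$ so that a positive $a$ survives in every region.

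For the second assertion, I unravel the definition of $C^{0,\alpha}_{\bs{\delta-2},\tau-2,t}$ region by region: a pointwise bound $|f|\lesssim t^{-(2-\tau)(p-q)/p-a}\rho^{\delta-2}w^{\tau-2}$ reads, in the scale-invariant norm $C^{0,\alpha}_{\bs{0},0,t}$, against the natural local scale of the region (namely $t^{(q-p)/p}$ in regions I, III, IV, and $t^{-\theta_i}$ under $F_{i,t}$ in region II). The power of $t$ carried by this rescaling cancels the factor $t^{-(2-\tau)(p-q)/p}$ exactly, leaving the residual $t^{-a}=\epsilon(t)$, which is the claimed smallness.
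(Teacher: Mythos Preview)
Your outline matches the paper's strategy: estimate $h_t$ region by region via the lemmas of Section \ref{S_3_2}, then compare $C^{0,\alpha}_{\bs{\delta-2},\tau-2,t}$ with $C^{0,\alpha}_{\bs{0},0,t}$ on each region. But the substantive content of the proof is missing. What you have not done is write down the actual system of inequalities on $a,\sigma_i,\sigma_i',\delta_i,\delta_i',\tau,\alpha$ and show it is solvable. In the paper this is the set \eqref{S_3_Ineqn_Parameter_1}: two inequalities per $i$ from Lemma \ref{S_3_Lemma_Ricci_2} (the cutoff error and the biholomorphism error in $U_{2,i}$), one inequality per $i$ from the Region II comparison in the second assertion, and one each from Regions III and IV. Passing to the limit $\alpha,\tau\to 0$, $\sigma_i'\to\sigma_i$, $\delta_i\to 2q_i/p$, $\delta_i'\to 2(2q_i-p)/p$ reduces this to the requirement that the intervals
\[
I_i=\Bigl(-\tfrac{2q_i\sigma_i}{p},\ \min\bigl\{\tfrac{2(q-p)}{p}-3\sigma_i,\ \tfrac{2(p-q)}{p}-\tfrac{2\sigma_i(2q_i-p)}{p}\bigr\}\Bigr)
\]
have nonempty intersection for some admissible $\sigma_i\in\bigl(\tfrac{q-p}{p},\tfrac{2(q-p)}{3p-2q_i}\bigr)$; the condition $p>2q_i$ is exactly what makes this $\sigma_i$-window nonempty, and the monotonicity of the endpoints in $q_i$ forces the extra inequality $\tfrac{p+4q_1}{5p-4q_1}>\tfrac{2q_d}{3p-2q_d}$ only when $q_1<\tfrac{p}{4}\le q_d$. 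You assert that the numerical hypothesis ``is precisely what makes the region-II constraints at $S_1$ and $S_d$ compatible'' without deriving any of this.

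Your treatment of the second assertion also has a gap. The cancellation of $t^{-(2-\tau)(p-q)/p}$ against the local scale is exact only in Region I (and for the $C^0$ part of Region IV). In Region II the comparison yields the constraint $a>-\delta_i\sigma_i+(2-\delta_i)\theta_i+2(q-p)/p$; in Region III one gets $a>-2q_i\sigma_i'/p$ up to $\alpha,\tau$ corrections; in Region IV the H\"older seminorm gives $a>\alpha(p-q)/p$. These lower bounds on $a$ must be reconciled with the upper bounds coming from the Ricci-potential estimate, and that reconciliation \emph{is} the proof. Saying ``leaving the residual $t^{-a}$'' hides the fact that several of these residuals are not $t^{-a}$ but $t^{-a+\text{(something)}}$, and the ``something'' feeds back into the system you must solve.
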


		In region I the Ricci potential $h_{t}$ satisfies:
			\begin{lemma}\label{S_3_Lemma_Ricci_1}
				Let $h_{t}$ be the Ricci potential to $\omega_{\phi_{t}'}$
					\begin{equation*}
						|\nabla_{g_{\phi_{t}'}}^{k}h_{t}|<
							\begin{cases}
								C_{k}t^{2(q-p)}\rho_{0}^{2(q-p)-k}& R_{\tilde{z}}>\kappa \rho_{0}\\
								C_{k}t^{2(q-p)}\rho_{0}^{2(q-1)}R_{\tilde{z}}^{-2(p-1)-k} & \kappa\rho_{0}>R_{\tilde{z}}>\kappa^{-1}t^{(q-p)/p}\rho_{0}^{q/p}\\
								C_{k}(\kappa)(t\rho_{0})^{2(q-p)/p}(t^{q-p}\rho_{0}^{q})^{-k/p}  & R_{\tilde{z}}<\kappa^{-1}t^{(q-p)/p}\rho_{0}^{q/p}
							\end{cases}.
					\end{equation*}
				In the language of the weighted H\"older space is
					\begin{equation*}
						\|h_{t}\|_{C^{k,\alpha}_{\delta_{0},\tau}(U_{1})}\lesssim t^{(4-\tau)(q-p)/p}.
					\end{equation*}
			\end{lemma}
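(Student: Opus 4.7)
The plan is to adapt the proof of Proposition \ref{S_2_Prop_Ricci_1} to the collapsing setting. Since $X_{t}$ is defined by $P(\tilde{z})+a_{t}^{q-p}Q_{t}(z_{n+2})=0$, the polynomial $Q$ appearing in the Section \ref{S_2_1} adjunction computation is now played by $Q_{t,a}:=a_{t}^{q-p}Q_{t}$. Running the same binomial expansion
\begin{equation*}
(\II\p\pb \phi_{t}')^{n+1}=(n+1)\,\pi^{*}\omega_{\CC}\wedge \omega_{SRF}^{n}+\omega_{SRF}^{n+1}
\end{equation*}
and applying adjunction, the fibrewise Calabi-Yau property of $\omega_{SRF}$ kills the $\pi^{*}\omega_{\CC}\wedge \omega_{SRF}^{n}$ contribution up to a constant, so the only non-constant piece of $h_{t}$ is proportional to
\begin{equation*}
\bigl(\II\p\pb |P(\tilde{z})|^{2/p}\varphi(P(\tilde{z})^{-1/p}\cdot\tilde{z})\bigr)^{n+1}\wedge \II\,\ud Q_{t,a}\wedge \ud\overline{Q_{t,a}}.
\end{equation*}
Because $\ud Q_{t,a}=a_{t}^{q-p}\ud Q_{t}$, this expression carries a global factor $|a_{t}|^{2(q-p)}=t^{2(q-p)}$ compared with the non-collapsing formula, and this is the source of all the $t$-dependence in the three pointwise bounds.

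The three pieces of the statement correspond exactly to the three pieces of the weight $w_{0}$ in \eqref{S_3_Defn_W0}, and in each I would replay the matching case of Proposition \ref{S_2_Prop_Ricci_1}. When $R_{\tilde{z}}>\kappa \rho_{0}$ the rescaling $\tilde{z}=D\cdot \tilde{v}$, $z_{n+2}=Dv_{n+2}$ with $D\sim \rho_{0}$ compares the metric to $g_{\CC\times V_{0}}$ and directly yields $|\nabla^{k}h_{t}|\lesssim t^{2(q-p)}\rho_{0}^{2(q-p)-k}$. For $\kappa^{-1}t^{(q-p)/p}\rho_{0}^{q/p}<R_{\tilde{z}}<\kappa \rho_{0}$, the two-scale rescaling $\tilde{z}=K\cdot \tilde{v}$, $z_{n+2}=z_{n+2}^{0}+Kv_{n+2}$ with $K\sim R_{\tilde{z}}$ produces the middle line in the same way. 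In the innermost regime $R_{\tilde{z}}<\kappa^{-1}t^{(q-p)/p}\rho_{0}^{q/p}$, I would use the coordinate change $Q_{t,a}(z_{n+2})=v_{n+2}^{q}$, $\tilde{z}=v_{n+2}^{q/p}\cdot \tilde{v}$, exactly as in Case 3 of Proposition \ref{S_2_Prop_Ricci_1}; this renormalises the fibre to $P(\tilde{v})+1=0$ with $R_{\tilde{v}}\lesssim \kappa^{-1}$, while the collapsing enters through $v_{n+2}\sim a_{t}^{(q-p)/q}z_{n+2}$, and tracking the resulting powers through the Jacobian and the volume form delivers the third line.

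Granted the three pointwise bounds, the weighted H\"older statement is a bookkeeping step: substitute each estimate into the defining expression $\rho_{0}^{-\delta_{0}+j}w_{0}^{-\tau+j}|\nabla^{j}h_{t}|$ with the matching piece of $w_{0}$, take the supremum over $\rho_{0}>A'$ (attained near $\rho_{0}\sim A'$ because $\delta_{0}<2q/p$ forces a negative $\rho_{0}$-exponent in each region), and collect $t$-exponents; the H\"older seminorm is handled by the same scaling argument on balls of comparable radius in the rescaled model, which is why Proposition \ref{S_2_Prop_Ricci_1} was proved for all derivative orders. I expect the main technical obstacle to be the case-3 bookkeeping: in the rescaled coordinates the base potential $|z_{n+2}|^{2}$ becomes the \emph{growing} quantity $\sim t^{2(p-q)/q}|v_{n+2}|^{2}$, and one must verify that this cancels correctly against the Jacobian of the substitution and the scaling of $\ud Q_{t,a}\wedge \ud\overline{Q_{t,a}}$ so that only the advertised powers of $t$ remain in the final bound.
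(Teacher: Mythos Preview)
Your proposal is correct and matches the paper's own argument: the paper likewise isolates the global factor $t^{2(q-p)}$ coming from $\ud(a_t^{q-p}Q_t)$, defers the two outer regimes verbatim to Proposition~\ref{S_2_Prop_Ricci_1}, and singles out only the innermost regime as needing a ``slightly different'' coordinate change. The only distinction is cosmetic: the paper writes that change as $Q_t(z_{n+2})=v_{n+2}^{q}$, $\tilde{z}=a_t^{(q-p)/p}v_{n+2}^{q/p}\cdot\tilde{v}$ (keeping $v_{n+2}\sim z_{n+2}$ and placing the $t$-scaling in the fibre variable), which is equivalent to your packaging $Q_{t,a}=v_{n+2}^{q}$, $\tilde{z}=v_{n+2}^{q/p}\cdot\tilde{v}$ after relabelling.
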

			\begin{proof}
				Notice that the Ricci potential is dominated by
					\begin{equation*}
						h_{t}=t^{2(q-p)}\frac{(\II \p \pb (|P(\tilde{z})|^{2/p}\varphi(P(\tilde{z})^{-1/p}\cdot \tilde{z})))^{n+1}\wedge \II \ud Q_{t}\wedge \ud \overline{Q_{t}}}{\II \ud z_{1}\wedge \cdots \ud \overline{z_{n+2}}}.
					\end{equation*}
				The computation in the $R_{\tilde{z}}>\kappa^{-1}t^{(q-p)/p}\rho_{0}^{q/p}$ is the same as in \ref{S_2_Prop_Ricci_1}. The computation on $R_{\tilde{z}}<\kappa^{-1}t^{(q-p)/p}\rho_{0}^{q/p}$ is slightly different. Consider the coordinate change
					\begin{equation}\label{S_3_Eqn_Coordinate_1}
						Q_{t}(z_{n+2})=v_{n+2}^{p}, \tilde{z}=t^{(q-p)/p}v_{n+2}^{q/p}\cdot \tilde{v}.
					\end{equation}
				The defining equation is
					\begin{equation*}
						P(\tilde{v})+1=0, R_{\tilde{v}}\lesssim \kappa^{-1}.
					\end{equation*}
				In the region $\{|v_{n+2}-v_{0}|<B_{1}t^{(q-p)/p}|v_{0}|^{q/p}\}$, use $|v_{n+2}|^{2}+|v_{0}|^{2q/p}\varphi(\tilde{v})$ to approximate the K\"ahler potential. We therefore conclude that
					\begin{equation*}
						|\nabla_{g_{\phi_{t}'}}^{k} h_{t}|<C_{k}(\kappa,B_{1})(t\rho_{0})^{2(q-p)/p}(t^{q-p}\rho_{0}^{q})^{-k/p}.
					\end{equation*}
			\end{proof}
		We shall now consider the comparison between $g_{\phi_{t}'}$ and $g_{\CC\times V_{0}}$ in a slightly larger domain.
			\begin{lemma}\label{S_3_Lemma_Met_1}
				Fix $k \in \N$, $\forall \epsilon>0$, there exist $A>A_{1}(\epsilon,B_{1}), B_{1}>B_{1}(\epsilon,\Lambda_{1}), \Lambda_{1}>\Lambda_{1}(\epsilon)$, then on $U_{1,1}':=\{R_{\tilde{z}}>\Lambda_{1}^{3/4}t^{(q-p)/p}\rho_{0}^{q/p}\}\cap U_{1}'$, where $U_{1}'=\{|z_{n+2}|, R_{\tilde{z}}>A_{1}^{3/4}\}$, the metric is approximated by $g_{\CC\times V_{0}}$
					\begin{equation*}
						\|\nabla_{g_{\phi_{t}'}}^{k}(g_{\phi_{t}'}-g_{\CC\times V_{0}})\|_{C^{k,\alpha}_{0,0}(U_{1,1}')}<\epsilon,
					\end{equation*}
				and on $U_{1,i}=\{R_{\tilde{z}}<2\Lambda_{1}^{2}t^{(q-p)/p}\rho_{0}^{q/p}, |z_{n+2}-z_{0}^{i}|<B_{1}t^{(q-p)/p}|z_{0}^{i}|^{q/p}\}\cap X_{t}, i>1$ we can approximate $g_{\phi_{t}'}$ by $t^{2(q-p)/p}|z_{0}^{i}|^{2q/p}g_{\CC\times V_{0}}$
					\begin{equation*}
						|\nabla_{g_{\phi_{t}'}}^{k}(g_{\phi_{t}'}-t^{2(q-p)/p}|z_{0}^{i}|^{2q/p}g_{\CC\times V_{0}})|<\epsilon \xi_{a_{t}^{q-p}Q_{t}}^{-k}.
					\end{equation*}
			\end{lemma}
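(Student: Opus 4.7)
The plan is a $t$-uniform scaling argument in each region, mirroring the $t=1$ calculation behind Proposition \ref{S_2_Prop_Met_1} but tracking all dependence on $t$ explicitly. A preliminary reduction: for $A_1$ sufficiently large, both $U_{1,1}'$ and each $U_{1,i}$ lie outside the supports of the cutoffs $\gamma_2(\rho_i t^{-\sigma_i})$ appearing in (\ref{S_3_Eqn_ApproxSol}), because $\sigma_i < 0$ shrinks those supports into tiny neighborhoods of the singular points $S_i$. Hence on these regions
\[
\phi_t' = |z_{n+2}|^2 + |P(\tilde z)|^{2/p}\varphi\bigl(P(\tilde z)^{-1/p}\cdot \tilde z\bigr),
\]
and the task reduces to comparing the Hessian of this closed-form expression with the claimed model metrics.

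For Part 1 on $U_{1,1}'$, I would further split into the subcases $R_{\tilde z} > \kappa\rho_0$ and $\Lambda_1^{3/4}t^{(q-p)/p}\rho_0^{q/p} < R_{\tilde z} \le \kappa\rho_0$, and apply the coordinate changes $(\tilde z, z_{n+2}) = (D\cdot\tilde v, D v_{n+2})$ with $D\sim\rho_0$, respectively $(\tilde z, z_{n+2}) = (K\cdot\tilde v, z_{n+2}^0 + K v_{n+2})$ with $K\sim R_{\tilde z}$, exactly as in Lemma \ref{S_3_Lemma_Ricci_1}. Homogeneity of $P$ and of $r_{\tilde z}^2$ under the weighted $\CC^*$-action makes the rescaled potential equal $|v_{n+2}|^2 + |P(\tilde v)|^{2/p}\varphi(P(\tilde v)^{-1/p}\cdot \tilde v)$ up to pluriharmonic and constant terms, and the region constraint translates to $R_{\tilde v}\gtrsim \Lambda_1^{3/4}$. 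The Calabi-Yau cone asymptotics $|\nabla^k_{g_\varphi}(\varphi - G_0^* r^2)|\lesssim R^{-c-k}$ recorded in Section \ref{S_2} then yield $C^{k,\alpha}$-closeness of the rescaled metric to $g_{\CC\times V_0}$ once $\Lambda_1$ is large enough; unscaling gives the $C^{k,\alpha}_{0,0}(U_{1,1}')$ estimate.

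For Part 2 on $U_{1,i}$, use the single rescaling $(\tilde z, z_{n+2} - z_0^i) = (t^{(q-p)/p}|z_0^i|^{q/p}\cdot\tilde v,\, t^{(q-p)/p}|z_0^i|^{q/p}v_{n+2})$, designed so that both the fiber and the base neighborhood become unit-scale. The defining equation becomes
\[
P(\tilde v) + \frac{a_t^{q-p}Q_t\bigl(z_0^i + t^{(q-p)/p}|z_0^i|^{q/p}v_{n+2}\bigr)}{t^{q-p}|z_0^i|^q} = 0,
\]
while the rescaled potential $t^{-2(q-p)/p}|z_0^i|^{-2q/p}\phi_t'$ equals $|v_{n+2}|^2 + |P(\tilde v)|^{2/p}\varphi(P(\tilde v)^{-1/p}\cdot\tilde v)$ modulo pluriharmonic and constant terms. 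Taylor expanding $Q_t$ at its simple root $z_0^i$, combined with $|v_{n+2}| < B_1$ and $R_{\tilde v}\lesssim \Lambda_1^2$, bounds the fractional correction in the defining equation by $O(t^{(q-p)/p}|z_0^i|^{-(p-q)/p}B_1)$, which is made uniformly small by the ordering $A_1 \gg B_1 \gg \Lambda_1 \gg 1$. The implicit function theorem then produces a biholomorphism onto a bounded subdomain of the product $\CC\times V_1$ with $C^{k,\alpha}$-small error, and the asymptotics of $\varphi$ close out the comparison with $t^{2(q-p)/p}|z_0^i|^{2q/p}g_{\CC\times V_0}$. The main obstacle is precisely this last perturbation estimate: $B_1$ must already be large to bridge the gluing neighborhoods $U_{2,i}$ with Part 1, while $\Lambda_1$ is forced large by Part 1 itself, so keeping the Taylor remainder of $Q_t$ under uniform control in $t$ and $|z_0^i|$ is what dictates the nested order of quantifiers $A_1(\epsilon, B_1) \gg B_1(\epsilon, \Lambda_1) \gg \Lambda_1(\epsilon)$ in the hypothesis.
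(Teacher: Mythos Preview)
Your Part 1 on $U_{1,1}'$ matches the paper's proof: the same two subcases $R_{\tilde z}>\kappa\rho_0$ and $\kappa\rho_0>R_{\tilde z}>\Lambda_1^{3/4}t^{(q-p)/p}\rho_0^{q/p}$, the same rescalings by $D\sim\rho_0$ and $K\sim R_{\tilde z}$, and the same source of smallness coming from the decay of $\varphi-r^2$. For Part 2 on $U_{1,i}$ you diverge: the paper applies the nonlinear change~(\ref{S_3_Eqn_Coordinate_1}) which straightens the fibration exactly to $P(\tilde v)+\mathrm{const}=0$, then compares the scaled potential directly with $|v_{n+2}|^2+\varphi(\tilde v)$ and bounds the deviation using $\varphi\in C^\infty_2(V_1)$; your linear rescaling keeps the base simple at the price of a perturbed fiber equation and an implicit-function-theorem step. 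Both routes are workable, and your explanation of why the quantifier order $A_1\gg B_1\gg\Lambda_1$ is forced is correct.

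Two issues in your Part 2. First, $z_0^i$ is \emph{not} a root of $Q_t$: the roots of $Q_t$ lie near the bounded points $s_j$, whereas $z_0^i$ is a far-away sample point with $|z_0^i|\gtrsim A_1^{3/4}$. What you need is the Taylor expansion of $Q_t$ \emph{about} $z_0^i$, whose constant term $Q_t(z_0^i)\approx (z_0^i)^q$ is nonzero and makes the rescaled fiber a smooth copy of $V_1$; were $z_0^i$ an actual root the fiber would be singular and the argument would collapse. Second, and more substantively, the model here is $g_{\CC\times V_1}$, not $g_{\CC\times V_0}$---the ``$V_0$'' in the lemma statement is a typo (compare Proposition~\ref{S_2_Prop_Met_1} and the application in Lemma~\ref{S_4_Lemma_Laplacian_1}, both of which have $V_1$, and note that the weight $\xi_{a_t^{q-p}Q_t}$ in the estimate is a $V_1$-weight). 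Your last step invokes the large-radius asymptotics $\varphi\approx r^2$ to pass from $V_1$ to $V_0$, but after your rescaling $R_{\tilde v}\lesssim\Lambda_1^2$, a range that includes the bounded core of $V_1$ where $g_{V_1}$ and $g_{V_0}$ differ by $O(1)$; no choice of $A_1,B_1,\Lambda_1$ closes that gap. If you simply stop at the $\CC\times V_1$ comparison---which is what the lemma actually requires and what your IFT step already delivers---the argument is complete and no asymptotics of $\varphi$ are needed.
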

			\begin{proof}
				The proof very similar to Lemma \ref{S_3_Lemma_Ricci_1}, we can again divide $X_{t}$ into three parts.
					\subsubsection*{$R_{\tilde{z}}>\kappa \rho_{0}$:}
						Consider the coordinate change
							\begin{equation*}
								\tilde{z}=D\cdot \tilde{v}, z_{n+2}=Dv_{n+2}, D<\rho_{0}<2D.
							\end{equation*}
						The definition equation becomes
							\begin{equation*}
								P(\tilde{v})+a_{t}^{q-p}D^{q-p}Q_{D}(v_{n+2})=0.
							\end{equation*}
						The difference between $D^{-2}g_{\phi_{t}'}$ and $D^{-2}g_{\CC\times V_{0}}$ is dominated by two parts. The first part is due to the projection to $V_{0}$ in the $\CC^{n+1}$ factor and all of its $k$-th derivatives bounded by $C_{k}(tD)^{q-p}$. And the seconded part is bounded by $D^{-2}Err$, whose $k$-th derivatives are bounded by $C_{k}(tD)^{-(2+c)(p-q)/p}$.
					
					\subsubsection*{$2\kappa \rho_{0}>R_{\tilde{z}}>\Lambda_{1}^{3/4}t^{(q-p)/p}\rho_{0}^{q/p}$:}
						Consider the coordinate change
							\begin{equation*}
								\tilde{z}=K\cdot \tilde{w}, z_{n+2}=v_{0}+K v_{n+2}.
							\end{equation*}
						Where $D<\rho_{0}<2D, K<R_{\tilde{z}}<2K$, and $\Lambda_{1}^{3/4}t^{(q-p)/p}<K<2\rho_{0}$. Similar argument applies. The term due to projection and its $k$-th derivatives are bounded by $C_{k}t^{q-p}D^{q-1}K^{-(p-1)}$. Meanwhile the $k$-th derivatives of the second term are bounded by $C_{k}(t^{(q-p)/p}D^{q/p}K^{-1})^{-(2+c)}$. It follows from the condition that $R_{\tilde{z}}>\Lambda_{1}^{3/4}t^{(q-p)/p}\rho_{0}^{q/p}$, the error terms are bounded by $C_{k}\Lambda_{1}^{-3(2+c)/4}$.

					\subsubsection*{$R_{\tilde{z}}<2\Lambda_{1}^{2}t^{(q-p)/p}\rho_{0}^{q/p}$:}
						We consider the region
							\begin{equation*}
								\{|z_{n}-z_{0}^{i}|<3B_{1}t^{(q-p)/p}|z_{0}^{i}|^{q/p}, R_{\tilde{z}}<2\Lambda_{1}^{2}t^{(q-p)/p}\rho_{0}^{q/p}\}.
							\end{equation*}
						
						 Use the same coordinate change in \ref{S_3_Eqn_Coordinate_1}. Now compare the scale K\"ahler potential
							\begin{equation*}
								t^{2(p-q)/p}|v_{0}^{i}|^{-2q/p}\phi_{t}'=|Q_{t}^{-1}(v_{n+2}^{p})|^{2}+\bigg|\frac{v_{n+2}}{v_{0}^{i}}\bigg|^{2q/p}\varphi(\tilde{v}),
							\end{equation*}
						with $|v_{n+2}|^{2}+\varphi(\tilde{v})$. Use the fact that $\varphi\in C^{\infty}_{2}(V_{1})$. Computation show that, the difference $Err'$ satisfies
							\begin{equation*}
								|\nabla^{k}Err'|<C_{k}B_{1}^{-k}\Lambda_{1}^{2(1+k)}\xi_{1}^{2-k}.
							\end{equation*}
						Then we prove the theorem, by observing that the difference for the metric is bounded by $\nabla^{2}Err'$.
						
			\end{proof}
		
		In the region II, the error terms in Ricci potential can be divided into two parts, the first part arises as the error introduced by the biholomorphism $F_{i,t}$, and the second part is introduced by gluing $\phi_{i}'$ to $\phi_{i}$ on $X_{i}$. We use $(\tilde{v},v_{n+2})$ as coordinates on $X_{i}\subset \CC^{n+2}$. Now $t^{2\theta_{i}}\phi_{t}'$ can be written as follow (up to a pluriharmnic function)
			\begin{equation*}
				\begin{split}
					\widetilde{\phi_{i,t}}:&=t^{2\theta_{i}}\phi_{_{t}}'=\gamma_{2}(\rho_{i}'t^{-\sigma_{i}-\theta_{i}})\phi_{i}\\
					&+\gamma_{1}(\rho_{i}'t^{-\sigma_{i}-\theta_{i}})\bigg((F_{i,t}^{-1})^{*}(t^{2\theta_{i}}|z_{_{n+2}}|^{2})+|P(\tilde{v})|^{2/d}\varphi(P(\tilde{v})^{-1/d}\cdot \tilde{v})\bigg).
				\end{split}
			\end{equation*}
		By the construction
			\begin{equation*}
				\begin{split}
					&(F_{i,t}^{-1})^{*}(t^{\theta_{i}}z_{n+2})=(1+t^{-\theta_{i}}O'(|v_{n+2}|))v_{n+2},\\
					&t^{(n+1)\theta_{i}}(F_{i,t}^{-1})^{*}\Omega_{X_{t}}=(1+t^{-\theta_{i}}O'(|v_{n+2}|))\Omega_{X_{_{i}}}.
				\end{split}
			\end{equation*}
		We can decompose $\widetilde{\phi_{i,t}}-\phi_{i}=u_{i,1}+u_{i,2}$ such that
			\begin{equation*}
				u_{i,1}=\gamma_{1}(\rho_{i}'t^{-\sigma_{i}-\theta_{i}})\bigg(\phi_{i}-(|v_{n+2}|^{2}+|P(\tilde{v})|^{2/p}\varphi(P(\tilde{v})^{-1/p}\cdot \tilde{v}))\bigg).
			\end{equation*}
		The first term $u_{1,i}$ supports on $\{\rho_{i}'>t^{\sigma_{i}+\theta_{i}}\}$ and $u_{1,i}\in C^{k,\alpha}_{\delta_{i},\tau}(X_{1})$ by Theorem \ref{Thm_1_Detail}. On the other hand, the error introduced by biholomorphism $F_{i,t}$ can be estimated
			\begin{equation*}
				|\nabla_{g_{\widetilde{\phi_{i,t}}}}^{k}u_{i,2}|<C_{k}t^{-\theta_{i}}\rho_{i}'.
			\end{equation*}
		The estimate for the metrics follows naturally, let $U_{2,i}'=\{\rho_{i}
		'<2A_{2}^{2}t^{\sigma_{i}'+\theta_{i}}\}$ be a slightly larger open set that contain $U_{2,i}$.
			\begin{lemma}\label{S_3_Lemma_Met_2}
				If $\alpha$ is sufficiently closed to $0$, then the $C^{0,\alpha}_{0,0}$ estimate for the difference is given by
					\begin{equation*}
						\|g_{\widetilde{\phi_{i,t}}}-g_{\phi_{i}}\|_{C^{0,\alpha}_{0,0}(U_{2,i}',g_{\widetilde{\phi_{i,t}}})}<\epsilon(t).
					\end{equation*}
				Recall that here $\epsilon(t)$ is some negative power of $t$.
			\end{lemma}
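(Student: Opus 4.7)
The plan is to use the decomposition $\widetilde{\phi_{i,t}}-\phi_{i}=u_{i,1}+u_{i,2}$ set up just before the statement and to bound $\II\,\p\pb u_{i,1}$ and $\II\,\p\pb u_{i,2}$ separately in the unweighted Hölder norm $C^{0,\alpha}_{0,0}(U_{2,i}',g_{\widetilde{\phi_{i,t}}})$. A short bootstrap confirms, once both contributions are small, that $g_{\widetilde{\phi_{i,t}}}$ and $g_{\phi_{i}}$ are uniformly equivalent on $U_{2,i}'$, so I can freely compute covariant derivatives in either metric and work in the $(\tilde{v},v_{n+2})$ coordinates on $X_{i}$.

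For $u_{i,1}$, Theorem \ref{Thm_1_Detail} applied to $X_{i}$ gives $\phi_{i}-\phi_{i}'\in C^{2,\alpha}_{\delta_{i},\tau}(X_{i})$ for any $\delta_{i}$ slightly below $2q_{i}/p$ and $\tau$ close to $0$. Combining this with $|\nabla^{j}\gamma_{1}(\rho_{i}'t^{-\sigma_{i}-\theta_{i}})|\lesssim(\rho_{i}')^{-j}$ on the transition shell (the cutoff is scaled exactly at $t^{\sigma_{i}+\theta_{i}}$), the product rule yields $|\nabla^{2}u_{i,1}|\lesssim(\rho_{i}')^{\delta_{i}-2}w_{i}^{\tau-2}$. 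I would then examine this bound regime by regime: where $w_{i}\equiv 1$ it reduces to $(\rho_{i}')^{\delta_{i}-2}$, and where $w_{i}\sim(\rho_{i}')^{q_{i}/p-1}$ the combined exponent collapses to $\delta_{i}-2q_{i}/p+\mathrm{O}(\tau)$. The hypothesis $p>2q_{i}$ makes $2q_{i}/p<1$, so both exponents can be made strictly negative by an appropriate choice of $\delta_{i}$ and $\tau$. Restricting to the support $\{\rho_{i}'>t^{\sigma_{i}+\theta_{i}}\}$ with $\sigma_{i}+\theta_{i}>0$ converts this pointwise smallness into a uniform bound of order $t^{-\epsilon}$. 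The $\alpha$-Hölder seminorm follows directly from the $C^{2,\alpha}_{\delta_{i},\tau}$ norm at the cost of a factor $(\rho_{i}'w_{i})^{\alpha}$, which is harmless for $\alpha$ small.

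For $u_{i,2}$, the pointwise bound $|\nabla^{k}_{g_{\widetilde{\phi_{i,t}}}}u_{i,2}|\lesssim C_{k}\,t^{-\theta_{i}}\rho_{i}'$ recorded just before the lemma comes from the identity $(F_{i,t}^{-1})^{*}(t^{\theta_{i}}z_{n+2})=(1+t^{-\theta_{i}}O'(|v_{n+2}|))v_{n+2}$. Taking $k=2$ and using $\rho_{i}'<2A_{2}^{2}t^{\sigma_{i}'+\theta_{i}}$ on $U_{2,i}'$ gives $|\nabla^{2}u_{i,2}|\lesssim A_{2}^{2}\,t^{\sigma_{i}'}=\epsilon(t)$ since $\sigma_{i}'<0$. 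The corresponding Hölder seminorm is handled by the $k=3$ bound and a standard Schauder interpolation, producing the same type of negative power of $t$.

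The main obstacle is the exponent bookkeeping in the $u_{i,1}$ estimate: I must verify that $\delta_{i}-2q_{i}/p$ and $\delta_{i}-2$ can simultaneously be made strictly negative within the admissible range $\delta_{i}\in(2(2q_{i}-p)/p,\,2q_{i}/p)$ supplied by Theorem \ref{Thm_1_Detail}, and that the resulting $t$-power is uniform across the shell and across all three regimes of $w_{i}$. The hypothesis $p>2q_{i}$ from Theorem \ref{Thm_2} is precisely what places $2q_{i}/p$ below $1$ and keeps the critical exponent $\delta_{i}-2q_{i}/p$ in an accessible range. Summing the two contributions then delivers the stated estimate.
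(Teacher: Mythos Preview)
Your proposal is correct and follows essentially the same approach as the paper: split $\widetilde{\phi_{i,t}}-\phi_i=u_{i,1}+u_{i,2}$, bound $\nabla^2 u_{i,1}$ using the $C^{2,\alpha}_{\delta_i,\tau}$ control of $\phi_i-\phi_i'$ together with $\rho_i'>t^{\sigma_i+\theta_i}$ on its support, and bound $\nabla^2 u_{i,2}$ using $|\nabla^k u_{i,2}|\lesssim t^{-\theta_i}\rho_i'$ together with $\rho_i'\lesssim t^{\sigma_i'+\theta_i}$ on $U_{2,i}'$. One small correction: the hypothesis $p>2q_i$ plays no role in the $u_{i,1}$ estimate here; all you need is $\delta_i<2q_i/p$ (the upper end of the admissible range from Theorem~\ref{Thm_1_Detail}) and $\tau$ close to $0$, which makes the critical exponent $\delta_i-2q_i/p-\tau(1-q_i/p)$ negative regardless of whether $2q_i/p<1$---the paper does not invoke $p>2q_i$ at this step, and you should drop that remark from your last paragraph.
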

			\begin{proof}
				Our transition region for the gluing is $\{t^{\sigma_{i}+\theta_{i}}<\rho_{i}'<2t^{\sigma_{i}'+\theta_{i}}\}$ on $X_{i}$. The difference between the metrics is bounded by $\nabla^{2}(u_{i,1}+u_{i,2})$, which satisfy
					\begin{equation*}
						\begin{split}
							|\nabla_{g_{\widetilde{\phi_{i,t}}}}^{k+2}u_{i,1}|&<C_{k}(\rho_{i}')^{\delta_{i}-\tau}(\rho_{i}'w_{i})^{\tau-2}(\rho_{i}'w_{i})^{-k}\\
							&<C_{k}(\rho_{i}')^{\delta_{i}-2q_{i}/p-\tau(1-q_{i}/p)}(\rho_{i}'w_{i})^{-k}.
						\end{split}
					\end{equation*}
				Choose $\tau$ sufficiently closed to $0$ such that $\delta_{i}-2q_{i}/p-\tau(1-q_{i}/p)<0$, we conclude that
					\begin{equation*}
						\|u_{i,1}\|_{C^{k,\alpha}_{0,0}(\{\rho_{i}'>t^{\sigma_{i}+\theta_{i}}\})}<C_{k}\epsilon(t).
					\end{equation*}
				Meanwhile for the second term
					\begin{equation*}
						|\nabla_{g_{\widetilde{\phi_{i,t}}}}^{k+2}u_{i,2}|<C_{k}t^{-\theta_{i}}\rho_{1}',
					\end{equation*}
				implies
					\begin{equation}\label{S_3_Eqn_Met_2}
						\begin{split}
							&|(\rho_{i}')^{k}\nabla_{g_{\widetilde{\phi_{i,t}}}}^{k+2}u_{i,2}|\lesssim t^{(k+1)\sigma_{i}'+k\theta_{i}}, k\leq 2,\\
							\sup_{v\neq v', v'\in B_{c}(v)}&(\rho_{i}'w_{i})^{\alpha}\frac{|\nabla^{2}u_{i,2}(v)-\nabla^{2}u_{i,2}(v')|}{d(v,v')^{\alpha}}\lesssim t^{\sigma_{i}'(1+\alpha)+\alpha\theta_{i}}.
						\end{split}
					\end{equation}
				We conclude that
					\begin{equation*}
						\|g_{\widetilde{\phi_{i,t}}}-g_{\phi_{i}}\|_{C^{0,\alpha}_{0,0}(U_{2,i}',g_{\widetilde{\phi_{i,t}}})}<\epsilon(t).
					\end{equation*}
			\end{proof}
			\begin{remark}\label{S_3_Remark_Met_2}
				When $p>2q_{i}$, using \ref{S_3_Eqn_Met_2} we can have $C^{1,\alpha}_{0,0}$ estimate by setting $(q-p)/p<\sigma_{i}'<-\theta_{i}/2$. Furthermore if $q_{1}=q_{2}$ we can choose $\sigma_{1}'=\sigma_{2}'$, and we can choose $\sigma_{i}'<-\frac{3}{4}\theta_{i}$ to obtain a weighted $C^{2,\alpha}$ estimate for the metric, while the conclusion in Proposition \ref{S_3_Prop_Ricci} still holds. It is useful for controlling the difference in the Laplacians between the approximate solution and the model spaces when gluing these model spaces to non-K\"ahler Einstein manifolds.
			\end{remark}
		
		Using similar argument, on $U_{2,i}$ the Ricci potential decomposes into two parts $h_{t}=h_{i,1}+h_{i,2}$ such that $h_{i,1}\in C^{k,\alpha}_{\delta_{i}'-2,\tau-2}(X_{i}),\delta_{i}'<\delta_{i}$, and supported on the set $\{\rho_{i}'>t^{\sigma_{i}+\theta_{i}}\}$; while $h_{i,2}$ satisfies
			\begin{equation*}
				|\nabla_{g_{\widetilde{\phi_{i,t}}}}^{k}h_{i,2}|<C_{k}t^{-\theta_{i}}\rho_{i}'.
			\end{equation*}
			\begin{lemma}\label{S_3_Lemma_Ricci_2}
				The Ricci potential satisfies
					\begin{equation}\label{S_3_Eqn_Ricci_2}
						t^{(\sigma_{i}'+\theta_{i})(\delta_{i}-\tau)}\|h_{t}\|_{C^{0,\alpha}_{\delta_{i}-2,\tau-2}(U_{2,i})}\lesssim t^{b},
					\end{equation}
				where $b=\max\{\delta_{i}(\sigma_{i}'-\sigma_{1})+\delta_{i}'(\sigma_{i}+\theta_{i})-\tau\sigma_{i}'-2\theta_{i},(3+\alpha-\tau)\sigma_{i}'+\alpha\theta_{i}\}$. We postpone the computation of $b$ until the proof of Proposition \ref{S_3_Prop_Ricci}.
			\end{lemma}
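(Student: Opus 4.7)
The plan is to estimate separately the two pieces $h_{i,1}, h_{i,2}$ of the decomposition of $h_t|_{U_{2,i}}$ stated just before the lemma, and then take the maximum of the resulting exponents.

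First I would handle $h_{i,1}$. Since $h_{i,1}$ arises from the transition between the approximate potential $\phi_i'$ and the true Calabi-Yau potential $\phi_i$ on $X_i$, applying Theorem~\ref{Thm_1_Detail} on $X_i$ together with Proposition~\ref{S_2_Prop_Ricci_1} furnishes a uniform (in $t$) bound $\|h_{i,1}\|_{C^{0,\alpha}_{\delta_i'-2,\tau-2}(X_i, g_{\phi_i})} \lesssim 1$ for some fixed $\delta_i' < \delta_i$. The cutoff $\gamma_1(\rho_i' t^{-\sigma_i - \theta_i})$ forces $h_{i,1}$ to be supported on $\{\rho_i' > t^{\sigma_i + \theta_i}\}$, so on this support one has $(\rho_i')^{\delta_i' - \delta_i} \leq t^{(\sigma_i + \theta_i)(\delta_i' - \delta_i)}$ since $\delta_i - \delta_i' > 0$. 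Trading the weight $\delta_i'$ for $\delta_i$ at this cost, then inserting the prefactor $t^{(\sigma_i' + \theta_i)(\delta_i - \tau)}$ together with the $t^{-2\theta_i}$ produced when the scaling identity of Section~\ref{S_3_1} is used with the shifted indices $\delta_i - 2,\tau - 2$, gives the first entry of the maximum defining $b$.

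Second I would handle $h_{i,2}$. Because $h_{i,2}$ records the error introduced by the biholomorphism $F_{i,t}$ captured in~\ref{S_3_Defn_HolErr}, the same argument that led to~\ref{S_3_Eqn_Met_2} for $u_{i,2}$ provides analogous pointwise and $\alpha$-Hölder bounds for $h_{i,2}$. Evaluating the weighted norm with these estimates, using the fact that $\rho_i' \lesssim t^{\sigma_i' + \theta_i}$ throughout $U_{2,i}$ so that the supremum is attained at the outer edge $\rho_i' \sim t^{\sigma_i' + \theta_i}$, and accounting for the $(\rho_i' w_i)^\alpha$ factor from the Hölder seminorm, yields the second entry $(3+\alpha-\tau)\sigma_i' + \alpha\theta_i$. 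Taking the maximum of the two contributions finishes the argument.

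The main obstacle I anticipate is purely bookkeeping. One must keep careful track of the multiplicative $t$-factors coming from three distinct sources: the intrinsic decay exponent $\delta_i'$ on the fixed model $X_i$, the cutoff radius $t^{\sigma_i + \theta_i}$ which bounds the support of $h_{i,1}$, and the metric scaling $t^{2\theta_i}$ that relates $g_{\phi_t'}$ to the unscaled $g_{\phi_i}$, while simultaneously matching the index shift $\delta_i \mapsto \delta_i - 2$ and $\tau \mapsto \tau - 2$ embedded in the norm $C^{0,\alpha}_{\delta_i - 2, \tau - 2}(U_{2,i})$ of Definition~\ref{S_3_Defn_Holder}. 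No new analytic input beyond Proposition~\ref{S_2_Prop_Ricci_1}, Theorem~\ref{Thm_1_Detail}, and the biholomorphism estimate~\ref{S_3_Defn_HolErr} is required; the lemma is essentially a careful rescaling accounting applied to a known decomposition.
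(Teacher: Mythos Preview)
Your proposal is correct and follows essentially the same route as the paper: decompose $h_t$ on $U_{2,i}$ into $h_{i,1}+h_{i,2}$, use the support condition $\rho_i'>t^{\sigma_i+\theta_i}$ together with the $C^{0,\alpha}_{\delta_i'-2,\tau-2}(X_i)$ bound to trade $\delta_i'$ for $\delta_i$ in the first piece, and use the pointwise growth $|\nabla^k h_{i,2}|\lesssim t^{-\theta_i}\rho_i'$ coming from~\ref{S_3_Defn_HolErr} for the second, then insert the scaling prefactor from Definition~\ref{S_3_Defn_Holder}. Your assessment that the lemma is pure bookkeeping with no new analytic input is exactly right.
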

			\begin{proof}
				The strategy is to transfer the computation to the scaled metric $(X_{i},\widetilde{\phi_{i,t}})$, and then directly compute the H\"older norms of $h_{i,1}, h_{i,2}$ separately. We start with $h_{i,1}$. The construction of  $h_{i,1}\in C^{k,\alpha}_{\delta_{i}',\tau}(X_{i})$ and the fact that it supports on $\{\rho_{i}'>t^{\sigma_{i}+\theta_{i}}\}$ imply
					\begin{equation*}
						|(\rho_{i}')^{2+k-\delta_{i}}w_{i}^{2+k-\tau}\nabla_{g_{\widetilde{\phi_{i,t}}}}^{k}h_{i,1}|<C_{k}(\rho_{i}')^{\delta_{i}'-\delta_{i}}
					\end{equation*}
				Proposition \ref{S_2_Prop_Ricci_1} implies we can choose $2(2q_{i}-p)/p<\delta_{i}'<\delta_{i}<2q_{i}/p$. Now compute the weighted H\"older norm for $h_{i,1}, h_{i,2}$ directly by definition
					\begin{equation*}
						\begin{split}
							&t^{\delta_{i}(\sigma_{i}'+\theta_{i})-\tau\sigma_{i}'-2\theta_{i}}|(\rho_{i}')^{2-\delta_{1}}w_{i}^{2-\tau}\nabla_{g_{\widetilde{\phi_{i,t}}}}^{k}h_{i,1}|\\
							&<C_{k}t^{\delta_{i}(\sigma_{i}'-\sigma_{i})+\delta_{i}'(\sigma_{i}+\theta_{i})-\tau\sigma_{i}'-2\theta_{i}}<C_{k}t^{b}.
						\end{split}
					\end{equation*}
				Meanwhile, from $|\nabla_{g_{\widetilde{\phi_{i,t}}}}^{k}h_{i,2}|<C_{k}t^{-\theta_{i}}\rho_{i}'$, we obtain
					\begin{equation*}
						\begin{split}
							&t^{\delta_{i}(\sigma_{i}'+\theta_{i})-\tau\sigma_{i}'-2\theta_{i}}(\rho_{i}')^{2-\delta_{i}}w_{1}^{2-\tau}|h_{i,2}|\lesssim t^{3\sigma_{i}'-\tau\sigma_{i}'}<t^{b};\\
							&t^{\delta_{i}(\sigma_{i}'+\theta_{i})-\tau\sigma_{i}'-2\theta_{i}}\sup_{v\neq v', v'\in B_{c}(v)}(\rho_{1}')^{-\delta_{1}+2+\alpha}w_{1}^{-\tau+2+\alpha}
							\frac{|h_{i,2}(v)-h_{i,2}(v')|}{d(v,v')^{\alpha}}\\
							&\lesssim t^{(3+\alpha-\tau)\sigma_{i}'+\alpha\theta_{i}}\leq t^{b}.
						\end{split}
					\end{equation*}
				To sum up the discussion above, we conclude that
					\begin{equation*}
						t^{(\sigma_{i}'+\theta_{i})(\delta_{i}-\tau)}\|h_{t}\|_{C^{0,\alpha}_{\delta_{i}-2,\tau-2}(U_{2,i})}<t^{b}.
					\end{equation*}
			\end{proof}
			
		In region III, we decompose the base space into balls of radius $2B_{2}t^{(q-p)/p}$, we define
			\begin{equation*}
				U_{3,i}=\{|z_{n+2}-v_{0}^{i}|<2B_{2}t^{(q-p)/p}\}\cap U_{3}.
			\end{equation*}
		Consider the coordinate change
			\begin{equation*}
				\begin{split}
					&z_{n+2}=v_{0}^{i}+a_{t}^{(q-p)}v_{n+2},\\
					&\tilde{z}=\big(a_{t}^{(q-p)}Q_{t}(z_{n+2})/Q_{t}(v_{0}^{i})\big)^{1/p}\cdot \tilde{v}.
				\end{split}
			\end{equation*}
		Now the defining equation becomes
			\begin{equation*}
				P(\tilde{v})+Q_{t}(v_{0}^{i})=0, R_{\tilde{v}}\lesssim \Lambda_{2}, |v_{n+2}|<2B_{2}.
			\end{equation*}
		In this coordinate, up to a pluriharmonic function, the K\"ahler potential can be written as
			\begin{equation*}
				t^{2(p-q)/p}\phi_{t}'=|v_{n+2}|^{2}+\bigg|\frac{Q_{t}(v_{0}^{i}+t^{(q-p)/p}v_{n+2})}{Q_{t}(v_{0})}\bigg|^{2/p}\varphi_{i}(\tilde{v}),
			\end{equation*}
		where $\varphi_{i,t}:=|Q_{t}(v_{0}^{i})|^{2/d}\varphi(Q_{t}(v_{0}^{i})^{-1/d}\cdot \tilde{v})$ defines a Calabi-Yau metric on
			\begin{equation*}
				V_{Q_{t}(v_{0}^{i})}\subset \CC^{n+1}: P(\tilde{v})+Q_{t}(v_{0}^{i})=0.
			\end{equation*}
		For simplicity write $\phi_{i,t}'=|v_{n+2}|^{2}+\varphi_{i,t}$ and $\xi_{i}=\xi_{Q_{t}(v_{0}^{i})}$ as a distance function on $V_{Q_{t}(v_{i})}$. Our construction in region II hinders $|Q_{t}|$ to be bounded from below, which might lead to the error term in both metric and Ricci curvature failing to converge to $0$ as $t\to \infty$. However $\sigma':=\min\{\sigma_{i}'\}>(q-p)/p$ ensures our machinery is applicable in this case, roughly speaking the "bad term" is characterized by $|\ud \ln(Q_{t})|$, which is bounded by $t^{-\sigma'}$. This factor can be absorbed by $t^{(q-p)/p}$, making the error small enough as $t\to \infty$.

			\begin{lemma}\label{S_3_Lemma_Met_3}
				The difference between metric $g_{\phi_{t}'}$ and $t^{2(q-p)/p}g_{\phi_{i}}$ is bounded by
					\begin{equation*}
						\|g_{\phi_{t}}-t^{2(q-p)/p}g_{\phi_{i,t}}\|_{C^{k,\alpha}_{0}(U_{3,i})}<C_{k}(A_{1},B_{2},\Lambda_{2})t^{(q-p)/p-\sigma'}.
					\end{equation*}
			\end{lemma}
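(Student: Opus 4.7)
The plan is to reduce everything to a direct comparison of Kähler potentials after the rescaling introduced before the statement, and then to control the difference via the logarithmic derivatives of $Q_t$. In the rescaled coordinates, $t^{2(p-q)/p}\phi_t'$ becomes $|v_{n+2}|^2 + |Q_t(v_0^i + t^{(q-p)/p} v_{n+2})/Q_t(v_0^i)|^{2/p}\varphi(\tilde{v})$ (modulo a pluriharmonic piece absorbed into $|v_{n+2}|^2$), while the model potential reads $\phi_{i,t}' = |v_{n+2}|^2 + \varphi_{i,t}(\tilde{v})$. Their difference is
\[
\Psi := \Big(\Big|\tfrac{Q_t(v_0^i + t^{(q-p)/p} v_{n+2})}{Q_t(v_0^i)}\Big|^{2/p} - 1\Big)\varphi_{i,t}(\tilde{v}),
\]
and $\II\partial\bar{\partial}\Psi$ controls the metric discrepancy in the scaled picture.

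The core estimate is on the logarithmic derivatives of $Q_t$. In region III, $\rho_i > t^{\sigma_i'}$ together with $\rho_i^2 = |Q_t|^{2/q_i} + R_{\tilde{z}}^2$ forces every zero of $Q_t$ to lie at distance at least $\gtrsim t^{\sigma'}$ from $v_0^i$, where $\sigma' := \min_i\sigma_i'$. Writing $\log Q_t$ as a sum of logarithms over the roots then gives
\[
|\partial_z^k\log Q_t(v_0^i)| \lesssim t^{-k\sigma'}, \qquad k\geq 1.
\]
Taylor expansion of the ratio shows that on the compact region $\{R_{\tilde{v}}\lesssim\Lambda_2,\ |v_{n+2}|<2B_2\}$ it equals $1$ up to an error of size $t^{(q-p)/p-\sigma'}$, together with analogous bounds for its $v_{n+2}$-derivatives (with constants depending on $B_2$ and the multiplicities $q_i$). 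Combined with the uniform $C^{k,\alpha}$ control of $\varphi_{i,t}$ on $\{R_{\tilde{v}}\lesssim\Lambda_2\}$ in the scale-invariant metric $\xi_i^{-2}g_{\phi_{i,t}'}$, this yields $\|\II\partial\bar{\partial}\Psi\|_{C^{k,\alpha}}\lesssim t^{(q-p)/p-\sigma'}$ in the rescaled picture.

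Finally, undoing the coordinate change identifies the unweighted $C^{k,\alpha}$ norm on the bounded rescaled region with the weighted norm $C^{k,\alpha}_{0}(U_{3,i})$ associated to the scaled background metric $\xi_{a_t^{q-p}Q_t}^{-2}g_{\phi_t}$, since $\xi_{a_t^{q-p}Q_t}\approx t^{(q-p)/p}\xi_i$ under the pullback; the resulting bound is exactly $C_k(A_1,B_2,\Lambda_2)\,t^{(q-p)/p-\sigma'}$.

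The main obstacle is securing the logarithmic derivative estimate uniformly in $t$, because the zeros of $Q_t$ cluster around each $s_i$ at rate $t^{-\theta_i}$. Here it is essential that $\rho_i > t^{\sigma_i'}$ hold for every $i$ simultaneously, so that no cluster of roots gets too close to $v_0^i$; the hypothesis $\sigma' > (q-p)/p$ then makes the resulting exponent $(q-p)/p - \sigma'$ strictly negative, guaranteeing that the error tends to zero as $t\to\infty$. This is precisely the quantitative form of the informal remark preceding the lemma, that the ``bad term'' $|\ud\ln Q_t|\lesssim t^{-\sigma'}$ is absorbed by the small scale $t^{(q-p)/p}$.
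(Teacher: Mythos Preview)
Your proposal is correct and follows essentially the same route as the paper: both reduce to the rescaled potential $t^{2(p-q)/p}\phi_t'=|v_{n+2}|^{2}+Q_i^{2/p}\varphi_{i,t}$, bound $Q_i^{2/p}-1$ via the logarithmic derivative estimate $|\ud\ln Q_t|\lesssim t^{-\sigma'}$ (which the paper states just before the lemma and records as \eqref{S_3_Eqn_Diff_2}), combine with the scale-invariant bounds $|\nabla^{k}\varphi_{i,t}|\lesssim \xi_i^{2-k}$ coming from $\varphi\in C^{\infty}_{2}(V_1)$, and finish by the scaling invariance of the $C^{k,\alpha}_{0}$ norm. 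Your only addition is spelling out why $|\partial_z^{k}\ln Q_t|\lesssim t^{-k\sigma'}$ follows from the root-separation forced by $\rho_i>t^{\sigma_i'}$, which the paper leaves implicit.
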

			\begin{proof}
				Notice that the H\"older $C^{k,\alpha}_{0}(U_{3})$ is invariant under scaling therefore for simplicity we only need to derive the estimate for the scaled metric $t^{2(p-q)/p}g_{\phi_{t}'}$. The key to the proof is that the K\"ahler potential $\varphi$ on $V_{1}$ satisfies
					\begin{equation*}
						\varphi\in C^{\infty}_{2}(V_{1}),
					\end{equation*}
				which implies that there are some uniform constant $C_{k}$ independent on $Q_{t}$ such that
					\begin{equation}\label{S_3_Eqn_Diff_1}
						|\nabla^{k}_{g_{\phi_{i,t}}}\varphi_{i,t}|_{g_{\phi_{i,t}}}<C_{k}\xi_{i}^{2-k}.
					\end{equation}
				Write $Q_{i}:=\big|\frac{Q_{t}(v_{0}+t^{(q-p)/p}v_{n+2})}{Q_{t}(v_{0}^{i})}\big|$ for short, combine with the fact that:
					\begin{equation}\label{S_3_Eqn_Diff_2}
						|\nabla^{k}_{g_{\phi_{i,t}}}(Q_{i}^{2/p}-1)|_{g_{\phi_{i,t}}}<C_{k}B_{2}^{k}t^{(q-p)/p-\sigma'}<C_{k}B_{2}^{k}\Lambda_{2}^{k} t^{(q-p)/p-\sigma'}\xi_{i}^{-k}.
					\end{equation}
				Notice that
					\begin{equation*}
						t^{2(p-q)/p}\phi_{t}'=|v_{n+2}|^{2}+Q_{i}^{2/p}\varphi_{i,t},
					\end{equation*}
				therefore we obtain
					\begin{equation*}
						|\nabla_{g_{\phi_{i,t}}}^{k}(g_{\phi_{i,t}}-t^{(q-p)/p}g_{\phi_{t}'})|_{g_{\phi_{i,t}}}<C_{k}(B_{2},\Lambda_{2})t^{(q-p)/p-\sigma'}\xi_{i}^{-k}.
					\end{equation*}
				Use the property that this norm is invariant under scaling, we prove the lemma.
			\end{proof}
		Similar to the computation near the singular ray in region I, the Ricci potential on region III can be bounded by
			{\begin{small}
			\begin{equation}\label{S_3_Eqn_Ricci_1}
				\begin{split}
					t^{\frac{2(q-p)}{p}}|Q_{t}'(v_{0}^{i}+t^{\frac{(q-p)}{p}}v_{n+2})|Q_{i}^{\frac{-2(n+p)}{p}}
					\frac{(\II \p \pb(|v_{n+2}|^{2}+Q_{i}^{\frac{2}{p}}\varphi_{i,t}))^{n+1}\wedge \II\ud v_{n+2}\wedge \ud \overline{v_{n+2}}}{\II \ud v_{1}\wedge \cdots \wedge \ud\overline{v_{n+2}}}.
				\end{split}
			\end{equation}
			\end{small}}
			\begin{lemma}\label{S_3_Lemma_Ricci_3}
				On region III, if $\tau$ sufficiently closed to $0$, the Ricci potential satisfies
					\begin{equation*}
						\|h_{t}\|_{C^{k,\alpha}_{\tau-2}(U_{3})}\lesssim t^{(4-\tau)(q-p)/p}.
					\end{equation*}
			\end{lemma}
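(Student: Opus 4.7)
The plan is to work in the scaled coordinates $(\tilde v, v_{n+2})$ on each ball $U_{3,i}$ introduced just before the lemma, and to exploit the fact that $\varphi_{i,t}$ is Calabi--Yau on the fiber $V_{Q_{t}(v_{0}^{i})}$. In these coordinates, formula \ref{S_3_Eqn_Ricci_1} naturally separates the Ricci potential into an explicit scaling prefactor $t^{2(q-p)/p}$, a bounded factor $|Q_t'|\cdot Q_i^{-2(n+p)/p}$, and a ratio of $(n+1)$-forms that, at $v_{n+2}=0$ (where $Q_i=1$), is a constant by the Calabi--Yau equation on the fiber. The entire problem is then reduced to controlling the deviation of this ratio from a constant in the bounded region $\{R_{\tilde v}\lesssim\Lambda_{2},\,|v_{n+2}|<2B_{2}\}$.

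First, I would derive the pointwise bound $|h_t|\lesssim C(A_{1},B_{2},\Lambda_{2})\,t^{2(q-p)/p}$ in the scaled metric $g_{\phi_{i,t}}$. The factor $|Q_t'(v_{0}^{i}+t^{(q-p)/p}v_{n+2})|$ is bounded by a constant $C(A_1)$ since $|z_{n+2}|<2A_1$ on $U_3$, and the estimate \ref{S_3_Eqn_Diff_2} from Lemma \ref{S_3_Lemma_Met_3} shows $|Q_i^{2/p}-1|\lesssim B_{2}\,t^{(q-p)/p-\sigma'}$, which is in fact small since $\sigma'>(q-p)/p$. The ratio of $(n+1)$-forms at $v_{n+2}=0$ is a universal constant determined by the Calabi--Yau condition for $\varphi_{i,t}$ on $V_{Q_{t}(v_{0}^{i})}$, and its deviation away from $v_{n+2}=0$ is controlled by $Q_{i}^{2/p}-1$.

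Next, I would estimate $\nabla^{k}_{\xi_{i}^{-2}g_{\phi_{i,t}}}h_{t}$ and the corresponding H\"older seminorm. The two ingredients are \ref{S_3_Eqn_Diff_1}, namely $|\nabla^{k}_{g_{\phi_{i,t}}}\varphi_{i,t}|\lesssim\xi_{i}^{2-k}$ with constants uniform in $t$ (since $\varphi_{i,t}$ is a rescaling of the fixed potential $\varphi$ on $V_{1}$ via the map $S_{Q_{t}(v_{0}^{i})}$), and \ref{S_3_Eqn_Diff_2}, which controls derivatives of $Q_{i}^{2/p}-1$ in the scaled metric. Differentiating the expression in \ref{S_3_Eqn_Ricci_1} using the Leibniz rule and applying these two estimates gives $|\nabla^{k}_{\xi_{i}^{-2}g_{\phi_{i,t}}}h_{t}|\lesssim C_{k}(A_{1},B_{2},\Lambda_{2})\,t^{2(q-p)/p}$, and the $C^{\alpha}$ seminorm estimate follows by the same scheme.

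Finally, to convert this scaled estimate into the weighted norm $\|h_{t}\|_{C^{k,\alpha}_{\tau-2}(U_{3})}=\|\xi_{a_{t}^{q-p}Q_{t}}^{-\tau+2}h_{t}\|_{C^{k,\alpha}(\xi^{-2}g_{\phi_{t}})}$, I note that on $U_{3}$ the weight $\xi_{a_{t}^{q-p}Q_{t}}$ is comparable to $t^{(q-p)/p}\xi_{i}$ in the scaled picture, while $\xi_{i}\lesssim\Lambda_{2}$ is uniformly bounded. For $\tau$ close to $0$ the factor $\xi^{2-\tau}$ therefore contributes an extra $t^{(2-\tau)(q-p)/p}$ to the sup, yielding the advertised bound $t^{(4-\tau)(q-p)/p}$. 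The main technical subtlety is the careful bookkeeping between the scaled and unscaled weighted norms and ensuring that the Calabi--Yau cancellation on each fiber $V_{Q_{t}(v_{0}^{i})}$ is fully used so that no extra positive power of $t$ is lost; the constraint $\sigma'>(q-p)/p$ from the region-II construction is exactly what makes this scheme work in region III.
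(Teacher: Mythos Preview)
Your approach is essentially identical to the paper's: use \ref{S_3_Eqn_Diff_1}, \ref{S_3_Eqn_Diff_2}, and \ref{S_3_Eqn_Ricci_1} to get $|\nabla^{k}_{g_{\phi_{i,t}}}h_{t}|\lesssim C_{k}(A_{1},B_{2},\Lambda_{2})\,t^{2(q-p)/p}\xi_{i}^{-k}$ in the scaled model, then multiply by $\xi_{a_{t}^{q-p}Q_{t}}^{2-\tau}\lesssim (t^{(q-p)/p}\Lambda_{2})^{2-\tau}$ to obtain the weighted bound $t^{(4-\tau)(q-p)/p}$.

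One small correction: your remark that the ratio of $(n+1)$-forms in \ref{S_3_Eqn_Ricci_1} is a \emph{constant} at $Q_{i}=1$ ``by the Calabi--Yau equation on the fiber'' is not accurate. The fiberwise Calabi--Yau equation was already consumed upstream (see the discussion before Proposition~\ref{S_2_Prop_Ricci_1}) to make the leading term $(n+1)\omega_{\CC}\wedge\omega_{SRF}^{n}$ constant; the expression in \ref{S_3_Eqn_Ricci_1} is precisely the residual term $\omega_{SRF}^{n+1}$, which is not constant. What you actually need, and what \ref{S_3_Eqn_Diff_1} provides, is that this ratio is merely \emph{bounded} together with its $\xi_{i}$-weighted derivatives, uniformly in $t$. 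Since you only use boundedness in the subsequent steps, the argument goes through unchanged; the reference to ``Calabi--Yau cancellation'' in your last paragraph should simply be dropped.
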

			\begin{proof}
				The Equations \ref{S_3_Eqn_Diff_1}, \ref{S_3_Eqn_Diff_2}, \ref{S_3_Eqn_Ricci_1} implies
					\begin{equation*}
						|\nabla_{g_{\phi_{i,t}}}^{k}h_{t}|_{g_{\phi_{i,t}}}<C_{k}(A_{1},B_{2},\Lambda_{2})t^{2(q-p)/p}\xi_{i}^{-k}.
					\end{equation*}
				Since this expression is scaled invariant, we can transfer the computation back to our original metric
					\begin{equation*}
						\xi_{a_{t}^{q-p}Q_{t}}^{k}|\nabla_{g_{\phi_{t}}'}^{k}h_{t}|<C_{k}(A_{1},B_{2},\Lambda_{2})t^{2(q-p)/p}.
					\end{equation*}
				Now on region III, $\xi_{a_{t}^{q-p}Q_{t}}\lesssim t^{(q-p)/p}\Lambda_{2}$
					\begin{equation*}
						\xi_{a_{t}^{q-p}Q_{t}}^{k+2-\tau}|\nabla_{g_{\phi_{t}}'}^{k}h_{t}|<C_{k}(A_{1},B_{2},\Lambda_{2})t^{(4-\tau)(q-p)/p}.
					\end{equation*}
			\end{proof}
		\begin{lemma}\label{PROP_DECAY_C}
			In region IV, the metric and the Ricci potential
				\begin{equation}\label{S_3_Lemma_RicciMet_4}
					\begin{split}
						&\|g_{\phi_{t}'}-g_{\CC\times V_{0}}\|_{C^{k,\alpha}_{0}(U_{4})}<C_{k}(A_{1})\Lambda_{2}^{-2-c},\\
						&\|h\|_{C^{k,\alpha}_{\tau}(U_{4})}\lesssim t^{(4-\tau)(q-p)/p}.
					\end{split}
				\end{equation}
		\end{lemma}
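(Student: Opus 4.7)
The plan is to read Lemma \ref{S_3_Lemma_RicciMet_4} as a direct specialization of Lemma \ref{S_3_Lemma_Met_1} and Lemma \ref{S_3_Lemma_Ricci_1} to the bounded-base region $U_4$, where $\rho_0\lesssim A_1$ while $R_{\tilde z}\in[t^{(q-p)/p}\Lambda_2,\,2A_1]$. The analytic work is already contained in those two lemmas; what remains is arithmetic of weights.

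First I would check that once $\Lambda_2>\kappa^{-1}(3A_1)^{q/p}$, the inequalities defining $U_4$ force $\kappa\rho_0>R_{\tilde z}>\kappa^{-1}t^{(q-p)/p}\rho_0^{q/p}$, placing us in the middle range of $w_0$ from \ref{S_3_Defn_W0}. The intermediate case in the proof of Lemma \ref{S_3_Lemma_Met_1} then applies: after the scaling $\tilde z=K\cdot\tilde w$, $z_{n+2}=v_0+Kv_{n+2}$ with $K\sim R_{\tilde z}$ and $D\sim A_1$, the error $g_{\phi_t'}-g_{\CC\times V_0}$ splits into a $V_0$-projection contribution of size $(tD)^{q-p}$ and an $Err$-contribution of size $(t^{(q-p)/p}D^{q/p}K^{-1})^{2+c}$. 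Plugging in $D\lesssim A_1$ and $K>t^{(q-p)/p}\Lambda_2$, the first becomes a negligible power of $t$ absorbed into the constant, while the second is $\lesssim (A_1^{q/p}\Lambda_2^{-1})^{2+c}\lesssim C_k(A_1)\Lambda_2^{-2-c}$. Since $\|\cdot\|_{C^{k,\alpha}_0(U_4)}$ is the scale-invariant $C^{k,\alpha}(R_{\tilde z}^{-2}g_{\phi_t'})$-norm, this yields the first asserted estimate.

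For the Ricci potential, the middle case of Lemma \ref{S_3_Lemma_Ricci_1} gives
\begin{equation*}
|\nabla_{g_{\phi_t'}}^k h_t|\lesssim t^{2(q-p)}A_1^{2(q-1)}R_{\tilde z}^{-2(p-1)-k}.
\end{equation*}
The $C^{k,\alpha}_\tau(U_4)$-norm weights the $k$-th derivative by $R_{\tilde z}^{k+2-\tau}$ (the shift by $2$ reflecting that the Ricci potential is being measured at weight $\tau-2$ in the ambient double-weighted space), so the pointwise quantity to control is $R_{\tilde z}^{k+2-\tau}|\nabla^k h_t|\lesssim t^{2(q-p)}A_1^{2(q-1)}R_{\tilde z}^{4-\tau-2p}$. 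For $p\geq 2$ and $\tau$ close to $0$ the exponent $4-\tau-2p$ is non-positive, so this is maximized at the lower threshold $R_{\tilde z}=t^{(q-p)/p}\Lambda_2$, and a short exponent count
\begin{equation*}
t^{2(q-p)}\cdot t^{(q-p)(4-\tau-2p)/p}=t^{(q-p)(2p+4-\tau-2p)/p}=t^{(4-\tau)(q-p)/p}
\end{equation*}
gives the claimed bound. The H\"older seminorm is then handled by applying the same pointwise estimate to $\nabla^{k+1}h_t$ and interpolating on unit-size balls of $R_{\tilde z}^{-2}g_{\phi_t'}$, whose geometry is uniformly equivalent to a fixed $\CC\times V_0$ reference model by the metric estimate already proven.

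The main obstacle is purely bookkeeping: one must verify that all constants depend only on $A_1$ (not on $t$) and that the exponent arithmetic lands exactly at $(4-\tau)(q-p)/p$ rather than at a neighboring value. There is no new analytic input beyond Lemmas \ref{S_3_Lemma_Met_1} and \ref{S_3_Lemma_Ricci_1}, because $U_4$ is where the geometry is genuinely that of the asymptotic model $\CC\times V_0$.
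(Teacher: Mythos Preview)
Your proposal is correct and follows essentially the same route as the paper: the paper also rescales by $K\sim R_{\tilde z}$ in the intermediate range, bounds the $Err$-contribution by $C_k(A_1)\Lambda_2^{-2-c}$, obtains $|\nabla^k h_t|\lesssim t^{2(q-p)}R_{\tilde z}^{2(1-p)-k}$, and then multiplies by $R_{\tilde z}^{2-\tau+k}$ and evaluates at the lower threshold $R_{\tilde z}\sim t^{(q-p)/p}\Lambda_2$ to land on $t^{(4-\tau)(q-p)/p}$. The only cosmetic difference is that the paper writes the computation out self-containedly rather than invoking Lemmas \ref{S_3_Lemma_Met_1} and \ref{S_3_Lemma_Ricci_1}, and records the condition $2-\tau/2<p$ explicitly where you write $p\geq 2$.
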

		\begin{proof}
			Estimate for the metric and Ricci potential in this region is rather coarse, though they are suffice for our purpose. Consider the coordinate change, in the region $\{K<R_{\tilde{z}}<2K\}\cap\{|z_{n+2}-v_{0}|<K\}$, with $t^{(q-p)/p}\Lambda_{2}<K<A_{1}/2$ (choose $\Lambda_{2}\gg A_{1}$).
				\begin{equation*}
					\tilde{z}=K\cdot \tilde{v}, z_{n+2}=v_{0}+K v_{n+2}, 
				\end{equation*}
			The defining equation is
				\begin{equation*}
					P(\tilde{v})+a_{t}^{q-p}K^{-p}Q_{t}(v_{0}+Kv_{n+2})=0.
				\end{equation*}
			Observe that
				\begin{equation*}
					|a_{t}^{q-p}K^{-p}Q_{t}|\leq Ct^{q-p}K^{-p}A_{1}^{q}\leq CA_{1}^{q}\Lambda_{2}^{-p}.
				\end{equation*}
			Now
				\begin{equation*}
					K^{-2}Err:=|P(\tilde{v})|^{2/p}\varphi(P(\tilde{v})^{-1/p}\cdot \tilde{v})-r_{\tilde{v}}^{2},
				\end{equation*}
			and therefore
				\begin{equation*}
					|\nabla^{k}_{K^{-2}g_{\CC\times V_{0}}}K^{-2}Err|_{K^{-2}g_{\CC\times V_{0}}}<C_{k}(t^{q-p}K^{-p}A_{1}^{q})^{(2+c)/p}	A_{1}^{k}.			
				\end{equation*}
			Recall $G_{0}$ is the orthonormal projection in $\CC^{n+1}$ to $V_{0}$ under the conical K\"ahler metric: $\II \p \pb R_{\tilde{v}}^{2}$. Pick $c$ sufficiently close to $0$ such that $2+c<p$
				\begin{equation*}
					|\nabla^{k}_{K^{-2}g_{\CC\times V_{0}}}\big(K^{-2}g_{\phi_{t}'}-G_{0}^{*}K^{-2}g_{\CC\times V_{0}}\big)|_{K^{-2}g_{\CC\times V_{0}}}<C_{k}(A_{1})\Lambda_{2}^{-2-c},
				\end{equation*}
			and 
				\begin{equation*}
					|\nabla^{k}_{K^{-2}g_{\phi_{t}}'}\big(K^{-2}g_{\phi_{t}'}-G_{0}^{*}K^{-2}g_{\CC\times V_{0}}\big)|_{K^{-2}g_{\phi_{t}}'}<C_{k}(A_{1})\Lambda_{2}^{-2-c}.
				\end{equation*}
			The error term in Ricci potential is
				{\begin{small}
				\begin{equation*}
						t^{2(q-p)}K^{2(1-p)}|Q_{t}'(v_{0}+Kv_{n+2})|^{2}\frac{(\II \p \pb |P(\tilde{v})|^{2/p}\varphi(P(\tilde{v})^{-1/p}\cdot \tilde{v})^{n+1}\wedge \II \ud v_{n+2}\wedge \ud \overline{v_{n+2}}}{\II \ud v_{1}\wedge \cdots \wedge \ud \overline{v}_{n+2}}.
				\end{equation*}
				\end{small}}
			Given that $t, \Lambda_{2}$ sufficiently large, the Ricci potential satisfies
				\begin{equation*}
					|\nabla^{k}_{K^{-2}g_{\phi_{t}}'}h_{t}|_{K^{-2}g_{\phi_{t}}'}<C_{k}(A_{1})t^{2(q-p)}K^{2(1-p)}.
				\end{equation*}
			To summarize
				\begin{equation*}
					\begin{split}
						|\nabla_{g_{\phi_{t}}'}^{k}(g_{\phi_{t}'}-g_{\CC\times V_{0}})|&<C_{k}(A_{1})\Lambda_{2}^{-2-c}R_{\tilde{z}}^{-k},\\
						|\nabla_{g_{\phi_{t}}'}^{k}h_{t}|&<C_{k}(A_{1})t^{^{2(q-p)}}R_{\tilde{z}}^{2(1-p)-k}.
					\end{split}
				\end{equation*}
			In particular if $2-\tau/2<p$
				\begin{equation*}
					R_{\tilde{z}}^{2-\tau+k}|\nabla_{g_{\phi_{t}}'}^{k}h_{t}|\lesssim t^{(4-\tau)(q-p)/p}.
				\end{equation*}
		\end{proof}		
	
		In the end, we now finish the proof of the main Proposition \ref{S_3_Prop_Ricci} of this section.
			\begin{proof}[Proof of Proposition \ref{S_3_Prop_Ricci}]
				In lemma \ref{S_3_Lemma_Ricci_1}, \ref{S_3_Lemma_Ricci_2}, \ref{S_3_Lemma_Ricci_3}, \ref{S_3_Lemma_RicciMet_4}, we establish the estimate for the Ricci potential in Region I-IV. It remains to show that for a certain choice of the parameter $\delta_{i}, \delta_{i}',\alpha,\dots$, there exits a positive constant $a$ such that
					\begin{equation*}
						\|h_{t}\|_{C^{0,\alpha}_{\bs{\delta-2},\tau-2,t}(X)}\lesssim t^{(2-\tau)(q-p)/p-a}
					\end{equation*}
				While any $\|f\|_{C^{0,\alpha}_{\bs{\delta-2},\tau-2,t}}\lesssim t^{(2-\tau)(q-p)/p-a}$, will also satisfies $\|f\|_{C^{0,\alpha}_{\bs{0},0,t}}\lesssim \epsilon(t)$. This is achieved by bounding the $C^{0,\alpha}_{\bs{0},0,t}$ norm by $C^{0,\alpha}_{\bs{\delta-2},\tau-2,t}$ norm on region I-IV respectively, and then solving a set of inequalities.
					\subsubsection*{Region I}
						Computation reads
							\begin{equation*}
								|f|\lesssim A_{1}^{\tau -\delta_{0}}\rho_{0}^{\delta_{0}-2}w_{0}^{\tau-2}t^{(2-\tau)(q-p)/p-a}<A_{1}^{\tau-\delta_{0}}\rho_{0}^{\delta_{0}-2q/p+\tau(q-p)/p}t^{-a}.
							\end{equation*}
						Similar computation holds for the H\"older semi-norm.
					\subsubsection*{Region II}
						We carry out the computation in the scaled model $(X_{i},t^{2\theta_{i}}g_{\phi_{t}'})$
							\begin{equation*}
								t^{\delta_{i}(\sigma_{i}'+\theta_{i})-\tau\sigma_{i}'-2\theta_{i}}|f|\lesssim (\rho_{i}')^{\delta_{i}-2}w_{i}^{\tau-2}t^{(2-\tau)(q-p)/p-a}.
							\end{equation*}
						And similarly for the H\"older semi-norm.
					
					\subsubsection*{Region III}
						In the original model, $|\xi_{a_{t}^{q-p}Q_{t}}|\gtrsim t^{(q-p+\sigma_{i}'q_{i})/p}$, the norm is
							\begin{equation*}
								|f|\lesssim\xi_{a_{t}^{q-p}Q_{t}}^{\tau-2}t^{(2-\tau)(q-p)/p-a}\lesssim t^{(\tau-2)\sigma'_{i}q_{i}/p-a},
							\end{equation*}
						and the semi-norm
							\begin{equation*}
								[f]_{0,\alpha}\lesssim t^{(\tau-2-\alpha)q_{i}\sigma_{i}'/p-\alpha(q-p)/p-a}.
							\end{equation*}
					\subsubsection*{Region IV}
						The computation goes
							\begin{equation*}
								|f|\lesssim R_{\tilde{z}}^{\tau-2}t^{(2-\tau)(q-p)/p-a}\lesssim t^{-a},
							\end{equation*}
						and the semi-norm
							\begin{equation*}
								[f]_{0,\alpha}\lesssim t^{-a+\alpha(p-q)/p}.
							\end{equation*}
					Now combine with the inequality \ref{S_3_Eqn_Ricci_2} we have the following inequalities for $a$ and $\sigma_{i}'$
						\begin{equation}\label{S_3_Ineqn_Parameter_1}
							\begin{cases}
								\delta_{i}'(\sigma_{i}'-\sigma_{i})+\delta_{i}'(\sigma_{i}+\theta_{i})-\tau\sigma_{i}'-2\theta_{1}<(2-\tau)(q-p)/p-a\\
								(3+\alpha-\tau)\sigma_{i}'+\alpha\theta_{i}-\tau\theta_{i}<(2-\tau)(q-p)/p-a\\
								\delta_{i}(\sigma_{i}'+\theta_{i})-\tau\sigma_{i}'-2\theta_{i}>(2-\tau)(q-p)/p-a\\
								(\tau-2-\alpha)q_{i}\sigma_{i}'/p-\alpha(q-p)/p-a<0\\
								-a+\alpha(p-q)/p<0
							\end{cases}.
						\end{equation}
					Without loss of generality, we consider the limiting situation when $\tau, \alpha\to 0, \sigma_{i}'\to \sigma_{i}$, and in this case the inequalities reduce to
						\begin{equation}\label{S_3_Ineqn_Parameter_2}
							\begin{cases}
								a<-\delta_{i}'(\sigma_{i}+\theta_{i})+2\theta_{i}+2(q-p)/p\\
								a<2(q-p)/p-3\sigma_{i}'\\
								a>2(q-p)/p+(2-\delta_{i})\theta_{i}-\delta_{i}\sigma_{i}\\
								a>-2q_{i}\sigma_{i}/p
							\end{cases}.
						\end{equation}
					Use Proposition \ref{S_2_Prop_Ricci_1}, we can set $\delta_{i}\to 2q_{i}/p$ and $\delta_{i}'\to 2(2q_{i}-p)/p$, the inequalities further reduce to
						\begin{equation}\label{S_3_Ineqn_Parameter_3}
							\begin{cases}
								a<2(p-q)/p-2\sigma_{i}(2q_{i}-p)/p\\
								a<2(q-p)/p-3\sigma_{i}\\
								a>-2q_{i}\sigma_{i}/p
							\end{cases}.
						\end{equation}
					The compatibility conditions for those inequalities and the constraint in our construction is
						\begin{equation*}
							\frac{q-p}{p}<\sigma_{i}<\frac{2(q-p)}{3p-2q_{i}}.
						\end{equation*}
					Here the condition $p>2q_{i}$ guarantees $2(q-p)/(3p-2q_{i})>(q-p)/p$. Define the interval
						\begin{equation*}
							I_{i}=(-\frac{2q_{i}\sigma_{i}}{p},\min\{\frac{2(q-p)}{p}-3\sigma_{i},\frac{2(p-q)}{p}-\frac{2\sigma_{i}(2q_{i}-p)}{p}\}).
						\end{equation*}
					Notice that when $i$ is fix, the left end point is monotonically increasing with respect to $-\sigma_{i}$. The infimum of the left end point is $\frac{4(p-q)q_{i}}{(3p-2q_{i})p}$, which is monotonically increasing with respect to $q_{i}$. On the other hand, the supremum of the right end point is
						\begin{equation*}
							\begin{cases}
								\frac{p-q}{p}, &q_{i}\geq\frac{1}{4}p, \sigma_{i}\to \frac{q-p}{p}\\
								\frac{2(p-q)(p+4q_{i})}{(5p-4q_{i})p}, & q_{i}<\frac{1}{4}p, \sigma_{i}\to \frac{4(q-p)}{5p-4q_{i}} 
							\end{cases},
						\end{equation*}
					which is again monotonically increasing with respect to $q_{i}$. Recall $q_{1}\leq \cdots\leq q_{d}$, and therefore to show that for a suitable choice of $\sigma_{i}$ the intersection $\bigcap_{i=1}^{d}I_{i}$ is not empty, it is suffice to for a certain choice of $\sigma_{i}$ the left end point of $I_{d}$ is less than the right end point of $I_{1}$. When $\frac{1}{4}p\leq q_{1}\leq q_{d}$ or $q_{1}\leq q_{d}<\frac{1}{4}p$, the above statement are automatically true. Meanwhile, when $q_{1}<\frac{1}{4}p\leq q_{d}$, we have the require in addition $\frac{p+4q_{1}}{5p-4q_{1}}>\frac{2q_{d}}{3p-2q_{d}}$. Now given that $\bigcap_{i=1}^{d}I_{i}$ is non-empty, for a particular choice of $a\in\bigcap_{i=1}^{d}I_{i}$, the inequalities \ref{S_3_Ineqn_Parameter_1} still hold after a small perturbation of the parameters.	
			\end{proof}
\section{A gluing theorem}\label{S_4}
	To sum up the discussion above, we have demonstrated that when $p,q_{i}$ satisfy a certain balancing condition \ref{S_3_Prop_Ricci}, the Ricci potential $h_{t}$ is small in both H\"older space and in the scaled models. In this section we use Banach fix point theorem to construct a K\"ahler metric $\omega_{\phi_{t}}$ such that its difference with our approximate solution $\omega_{\phi_{t}'}$
		\begin{equation*}
			\|\omega_{\phi_{t}}-\omega_{\phi_{t}'}\|_{C^{0,\alpha}_{\bs{\delta-2},\tau-2,t}}\lesssim t^{-(2-\tau)(p-q)/p-a}.
		\end{equation*}
	One should take special care of the region $U_{2,i}$, where the scaled metrics are approximated by $g_{\phi_{i}}$. We only have the weighted $C^{0,\alpha}$ estimate for the error in the gluing region (See \ref{S_3_Lemma_Met_2}, \ref{S_3_Remark_Met_2}). Consequently the difference between Christoffel symbols might not be bounded, however, on K\"ahler manifolds the difference between two scalar Laplacians is bounded by their $C^{0,\alpha}$ difference. Unlike the standard gluing theory, it does not make sense to define a right inverse for the Laplacian, for in the gluing region the H\"older space $C^{2,\alpha}_{\delta_{i}-2,\tau-2}(g_{\phi_{i}})$ and $C^{2,\alpha}_{\delta_{i}-2,\tau-2}(t^{2\theta_{i}}g_{\phi_{t}'})$ is not uniformly equivalent. We slightly modify the construction, namely we construct an approximate right inverse to a larger space
		\begin{equation*}
			P: C^{0,\alpha}_{\bs{\delta-2},\tau-2,t}\to C^{2,\alpha}_{\text{model},\bs{\delta},\tau,t},
		\end{equation*}
	where $C^{k,\alpha}_{\text{model},\bs{\delta},\tau,t}$ is the direct sum of the following spaces
		\begin{equation*}
			A_{1}^{\delta_{0}-\tau}\|\cdot\|_{C^{k,\alpha}_{\delta_{0},\tau}(U_{1}')}, t^{\delta_{i}(\sigma_{i}'+\theta_{i})-\tau\sigma_{i}'}\|\cdot \|_{C^{k,\alpha}_{\delta_{i},\tau}(U_{2,i}',g_{\phi_{i}})},\|\cdot \|_{C^{k,\alpha}_{\tau}(U_{3}')},\|\cdot \|_{C^{k,\alpha}_{\tau}(U_{4}')}.
		\end{equation*}
	Here, $U_{i,j}'$ contain $U_{i,j}$ and are neighborhoods of the support of the cutoff functions $\widetilde{\beta_{i,j}}$, which we shall define later in the next section. Moreover one can check norms in $C^{0,\alpha}_{\text{model},\bs{\delta},\tau,t}$ and $C^{0,\alpha}_{\bs{\delta},\tau,t}$ for tensors are uniformly equivalent, and similarly $C^{1,\alpha}_{\text{model},\bs{\delta},\tau,t}$, $C^{1,\alpha}_{\bs{\delta},\tau,t}$ for functions. Henceforth it makes sense to define an invertible operator
		\begin{equation*}
			\Delta_{\phi_{t}'}P: C^{0,\alpha}_{\bs{\delta-2},\tau-2,t}\to C^{0,\alpha}_{\bs{\delta-2},\tau-2,t}.
		\end{equation*}
	Define an operator from real valued function to real $(1,1)$ form
		\begin{equation}\label{S_4_Defn_R}
			\mathscr{R}:C^{0,\alpha}_{\bs{\delta-2},\tau-2,t}(\Lambda^{0})\to C^{0,\alpha}_{\bs{\delta-2},\tau-2,t}(\Lambda^{1,1}), f\mapsto \II \p \pb P(\Delta_{\phi_{t}'}P)^{-1}f.
		\end{equation}
	It remains to solve the following equation in the Banach space $C^{0,\alpha}_{\bs{\delta-2},\tau-2,t}(\Lambda^{0})$
		\begin{equation}\label{S_4_Eqn_CY}
			\frac{(\omega_{\phi_{t}'}+\mathscr{R}f)^{n+1}}{\II^{(n+1)^{2}}\Omega_{X_{t}}\wedge \overline{\Omega_{X_{t}}}}=1.
		\end{equation}
	The solution $\omega_{\phi_{t}'}+\mathscr{R}f$ is not smooth, but the regularity can be improved using Monge Amp\`ere equation. 
	\subsection{Cutoff functions}\label{S_4_1}
		The spirit of the cutoff functions is that $\beta_{i,j}$ is bounded in $C^{k,\alpha}_{\text{model},\bs{0},0,t}$, whereas $\widetilde{\beta_{i,j}}\equiv1$ on the support of $\beta_{i,j}$ and the weighted H\"older norms of $\nabla\widetilde{\beta_{i,j}}$ tend to $0$ as the gluing parameters $A_{1}, A_{2},\Lambda_{2}\to \infty$, which allow us to construct an approximate inverse for the Laplacian. And let $U_{i,j}'$ be a neighborhood of the support of $\widetilde{\beta_{i,j}}$. Define the cutoff functions explicitly
			\begin{equation*}
				\begin{split}
					&\beta_{1}=1-\gamma_{2}\big(\frac{R_{\tilde{z}}}{A_{1}}\big)\gamma_{2}\big(\frac{|z_{n+2}|}{A_{1}}\big),\\
					&\tilde{\beta_{1}}=1-\gamma_{2}\big(\frac{\ln(|z_{n+2}|A_{1}^{-1/2})}{\ln(A_{1}^{1/4})}\big)\gamma_{2}\big(\frac{\ln(R_{\tilde{z}}A_{1}^{-1/2})}{\ln(A_{1}^{1/4})}\big),\\	
					&\beta_{2,i}=\gamma_{2}\big(\frac{\rho_{i}}{t^{\sigma_{i}'}}\big), \widetilde{\beta_{2,i}}=\gamma_{2}\big(\frac{\ln(A_{2}\rho_{i}t^{-\sigma_{i}'}/2)}{\ln(A_{2})}\big),\\
					&\beta_{3}'=\gamma_{2}\big(\frac{|z_{n+2}|}{A_{1}}\big)\gamma_{2}\big(\frac{R_{\tilde{z}}}{t^{(q-p)/p}\Lambda_{2}}\big),\\ 
					&\widetilde{\beta_{3}'}=\gamma_{2}\big(\frac{\ln(R_{\tilde{z}}/2)}{\ln(A_{1})}\big)\gamma_{2}\big(\frac{\ln(R_{\tilde{z}}t^{(p-q)/p}/2)}{\ln(\Lambda_{2})}\big),\\
					&\beta_{3}=\beta_{3}'(1-\beta_{2,1}-\beta_{2,2}),\\
					 &\widetilde{\beta_{3}}=\widetilde{\beta_{3}'}\big(1-\gamma_{2}\big(\frac{\ln(\rho_{1}A_{2}^{-1/2}t^{-\sigma_{i}'})}{\ln(A_{2}^{1/4})}\big)-\gamma_{2}\big(\frac{\ln(\rho_{2}A_{2}^{-1/2}t^{-\sigma_{i}'})}{\ln(A_{2}^{1/4})}\big)\big),\\
					&\beta_{4}'=\gamma_{2}\big(\frac{R_{\tilde{z}}}{A_{1}}\big)\gamma_{2}\big(\frac{|z_{n+2}|}{A_{1}}\big)\gamma_{1}\big(\frac{R_{\tilde{z}}}{t^{(q-p)/p}\Lambda_{2}}\big),\\
					&\widetilde{\beta_{4}'}=\gamma_{2}\big(\frac{\ln(R_{\tilde{z}}/2)}{\ln(A_{1})}\big)\gamma_{2}\big(\frac{\ln(|z_{n+2}|/2)}{\ln(A_{1})}\big)\gamma_{1}\big(\frac{\ln(R_{\tilde{z}}\Lambda_{2}^{-1/2}t^{(p-q)/p})}{\ln(\Lambda_{2}^{1/4})}\big),\\
					&\beta_{4}=\beta_{4}'(1-\beta_{2,1}-\beta_{2,2}),\\
					&\widetilde{\beta_{4}}=\widetilde{\beta_{4}'}\big(1-\gamma_{2}\big(\frac{\ln(\rho_{1}A_{2}^{-1/2}t^{-\sigma_{i}'})}{\ln(A_{2}^{1/4})}\big)-\gamma_{2}\big(\frac{\ln(\rho_{2}A_{2}^{-1/2}t^{-\sigma_{i}'})}{\ln(A_{2}^{1/4})}\big)\big).
				\end{split}
			\end{equation*}

		On each model space we shall list the H\"older norms of each cutoff functions, and one should check the equivalence of the H\"older norm on the overlap of region I-IV. On region I, we see that
			\begin{equation*}
				|\nabla^{k} \beta_{1}|_{U_{1}'}< C_{k}\rho_{0}^{-k}, |\nabla^{k}\widetilde{\beta_{1}}|_{U_{1}'}<\frac{C_{k}}{\ln(A_{1})}\rho_{0}^{-k}.
			\end{equation*}
		Noting that $|w_{0}|\leq 1$, we now see that
			\begin{equation*}
				\|\beta_{1}\|_{C^{k,\alpha}_{0,0,t}(U_{1}')}<C_{k}, \|\nabla\widetilde{\beta_{1}}\|_{C^{k,\alpha}_{-1,-1,t}(U_{1}')}<\frac{C_{k}}{\ln(A_{1})}.
			\end{equation*}
		And similarly for the scaled model metrics near the critical points in region II, it should be pointed out that the H\"older space is defined using the model metrics but not the approximate solution
			\begin{equation*}
				\|\beta_{2,i}\|_{C^{k,\alpha}_{0,0}(U_{2,i}',g_{\phi_{i}})}<C_{k}, \|\nabla \widetilde{\beta_{2,i}}\|_{C^{k,\alpha}_{-1,-1}(U_{2,i}', g_{\phi_{i}})}<\frac{C_{k}}{\ln(A_{2})}.
			\end{equation*}
		On region III we can see that for the model metric $g_{3}=g_{\CC}\oplus t^{2(q-p)/p}|Q_{t}|^{2/d}g_{V_{1}}$. We note that
			\begin{equation*}
				\|\beta_{3}\|_{C^{k,\alpha}_{0}(g_{3})}<C_{k}, \|\nabla\widetilde{\beta_{3}}\|_{C^{k,\alpha}_{-1}(g_{3})}<C_{k}\max(\frac{1}{\ln A_{2}},\frac{1}{\ln{\Lambda_{2}}}).
			\end{equation*}
		On region IV we will arrive at the similar conclusion for the model metric $g_{4}=g_{\CC}\oplus g_{V_{0}}$
			\begin{equation*}
				\|\beta_{4}\|_{C^{k,\alpha}_{0}(U_{4}',g_{4})}<C_{k}, \|\nabla\widetilde{\beta_{4}}\|_{C^{k,\alpha}_{-1}(U_{4}', g_{4})}<C_{k}\max(\frac{1}{\ln A_{2}},\frac{1}{\ln{\Lambda_{2}}}).
			\end{equation*}
		Another ingredient is the multiplication law for our weighted H\"older space:
			\begin{lemma}\label{S_4_Lemma_Multiplication}
				Let $f_{1}\in C^{0,\alpha}_{\bs{\delta-2},\tau-2,t}$ and $f_{2}\in C^{0,\alpha}_{\bs{0},0,t}$, then there exists a constant $C$ independent on $t$ such that
					\begin{equation*}
						\|f_{1} f_{2}\|_{C^{0,\alpha}_{\bs{\delta-2},\tau-2,t}}\leq C\|f_{1}\|_{C^{0,\alpha}_{\bs{\delta-2},\tau-2,t}}\|f_{2}\|_{C^{0,\alpha}_{\bs{0},0,t}}.
					\end{equation*}
			\end{lemma}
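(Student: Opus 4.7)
The plan is to reduce the claim to a region-by-region estimate by unpacking Definition \ref{S_3_Defn_Holder}. Writing
\begin{equation*}
\|f\|_{C^{0,\alpha}_{\bs{\delta},\tau,t}} = A_1^{\delta_0-\tau}\|f\|_{C^{0,\alpha}_{\delta_0,\tau,t}(U_1)}+\sum_{i=1}^d t^{(\delta_i-\tau)\sigma_i'}\|f\|_{C^{0,\alpha}_{\delta_i,\tau}(U_{2,i})}+\|f\|_{C^{0,\alpha}_{\tau,t}(U_3)}+\|f\|_{C^{0,\alpha}_{\tau}(U_4)},
\end{equation*}
each summand is, after factoring out the explicit $t$- and $A_1$-prefactors, an ordinary weighted H\"older norm: two-weight on $U_1$ and $U_{2,i}$ with weights $\rho_0^{\delta_0-\tau}w_0^{-\tau}$ and $(\rho_i')^{\delta_i-\tau}w_i^{-\tau}$ respectively (measured against $g_{\phi_t'}$ or the scaled metric $t^{2\theta_i}g_{\phi_t'}$), single-weight on $U_3$ and $U_4$ with weights $\xi_{a_t^{q-p}Q_t}^{-\tau}$ and $R_{\tilde z}^{-\tau}$ (measured against $\xi^{-2}g_{\phi_t'}$ and $R_{\tilde z}^{-2}g_{\phi_t'}$). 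It is enough to prove the multiplication inequality separately on each region with a constant independent of $t$ and then sum.

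On a single region one applies the two standard pointwise inequalities
\begin{equation*}
|f_1 f_2|(z) \leq |f_1|(z)\,|f_2|(z),\qquad \frac{|(f_1 f_2)(z)-(f_1 f_2)(z')|}{d(z,z')^{\alpha}} \leq \frac{|f_1(z)-f_1(z')|}{d(z,z')^{\alpha}}\|f_2\|_{\infty}+\|f_1\|_{\infty}\frac{|f_2(z)-f_2(z')|}{d(z,z')^{\alpha}},
\end{equation*}
multiplied by the weight $\rho^{2-\delta}w^{2-\tau}$ (or its $U_3, U_4$ analogue). Because $f_2$ is measured in the unweighted space $C^{0,\alpha}_{\bs{0},0,t}$, this weight factor attaches entirely to the $f_1$-side, and one reads off $\|f_1 f_2\|_{\mathrm{region}} \leq C \|f_1\|_{\mathrm{region}}\|f_2\|_{C^{0,\alpha}_{\bs{0},0,t}}$ with $C$ depending only on the H\"older ball radius $c$ in \ref{S_2_Defn_Holder_2} and on the dimension.

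The only point requiring attention is that the constant $C$ does not degenerate as $t\to\infty$. On $U_1$, $U_3$, $U_4$ this is automatic, since the $t$-dependence enters only through the definition of the weight $w_0$ or through the rescaled metric $\xi^{-2}g_{\phi_t'}$, and the same weight (respectively the same metric) appears in the hypothesis and the conclusion. On $U_{2,i}$ the key observation is that the prefactor $t^{(\delta_i-\tau)\sigma_i'}$ in the definition of $\|f_1\|_{C^{0,\alpha}_{\bs{\delta-2},\tau-2,t}}$ is the one that naturally attaches to $f_1 f_2$ as well, because the unweighted norm of $f_2$ in this region (corresponding to $\delta_i=\tau$) carries no $t$-prefactor. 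Summing the four regional inequalities proves the lemma. The main ``obstacle'' is purely bookkeeping: verifying that the scaling factors match up on $U_{2,i}$, which follows because the H\"older norm $C^{0,\alpha}_{\bs{0},0,t}$ is, by construction, invariant under the relevant $t^{2\theta_i}$-rescalings.
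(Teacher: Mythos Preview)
Your proposal is correct and follows essentially the same approach as the paper: decompose into regions, use the standard product rule for H\"older norms in each, and observe that the $t$-independence follows because $\|\cdot\|_{C^{0,\alpha}_{\bs{0},0,t}}$ is scale-invariant so the prefactors attach only to the $f_1$ side. The paper's proof is a one-sentence sketch (``transfer the computation to each scaled model space''), and you have simply filled in the bookkeeping it omits.
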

			\begin{proof}
				Use the fact that $\|\cdot\|_{C^{0,\alpha}_{\bs{0},0,t}}$ is invariant under the scaling, and therefore we can transfer the computation to each scaled model spaces, and computation will show that the constant is independent on the $t$ parameter.
			\end{proof}

	\subsection{Decomposition and parametrix}\label{S_4_2}
		We mainly use Proposition \ref{S_2_Prop_Map_1}, \ref{S_2_Prop_Map_2}, \ref{S_2_Prop_Map_3}, \ref{MAP_LAPLACIAN} to construct the approximate right inverse, and we define the bounded right inverse in each model spaces as follow:
			\begin{definition}\label{S_4_Defn_Rightinverse}
				Define $P_{0}, P_{1}, P_{2,i}, P_{3}$ as the bounded right inverses to the Laplacians respectively
					\begin{equation*}
						\begin{split}
							&P_{0}: \Delta_{\CC\times V_{0}}, C^{2,\alpha}_{\delta_{0},\tau}(\CC\times V_{0})\to C^{0,\alpha}_{\delta_{0}-2,\tau-2}(\CC\times V_{0}),\\
							&P_{1}: \Delta_{\CC\times V_{1}},C^{2,\alpha}_{\tau}(\CC\times V_{1})\to C^{0,\alpha}_{\tau-2}(\CC\times V_{1}),\\
							&P_{2,i}: \Delta_{g_{\phi_{i}}}, C^{2,\alpha}_{\delta_{i},\tau}(X_{i})\to C^{0,\alpha}_{\delta_{i}-2,\tau-2}(X_{i}),\\
							&P_{3}: \Delta_{\CC\times V_{0}}, C^{2,\alpha}_{\tau}(\CC\times V_{0})\to C^{0,\alpha}_{\tau-2}(\CC\times V_{0}).
						\end{split}
					\end{equation*}
			\end{definition}
		The idea is to divide the function $f\in C^{0,\alpha}_{\bs{\delta-2},\tau-2,t}(X_{i})$ into several pieces
			\begin{equation*}
				f=f_{1}+f_{3}+f_{4}+\sum_{i=1}^{d}f_{2,i},
			\end{equation*}
		where $f_{i,j}=\beta_{i,j}f$. Our approach is to approximately invert $f_{i,j}$ in each regions, and then glue them together using the cutoff functions $\widetilde{\beta_{i,j}}$.
			\subsubsection*{Region I}
				Consider cutoff functions on region I
					\begin{equation*}
						\begin{split}
							&\beta_{1,i}=\gamma_{i}(R_{\tilde{z}}\Lambda_{1}^{-1}t^{(p-q)/p}\rho_{0}^{-q/p}),\\
							&\widetilde{\beta_{1,1}}=\gamma_{1}\bigg(\frac{\ln(R_{\tilde{z}}\Lambda_{1}^{-1/2}t^{(p-q)/p}\rho_{0}^{-q/p})}{\ln(\Lambda_{1}^{1/4})}\bigg),\\
							&\widetilde{\beta_{1,2}}=\gamma_{2}\bigg(\frac{\ln (R_{\tilde{z}}t^{(q-p)/p}2^{-1})}{\ln(\Lambda_{1})}\bigg).
						\end{split}
					\end{equation*}
				Further decompose $f_{1}$ into $f_{1}=f_{1,1}+f_{1,2}$, $f_{1,i}=\beta_{1,i}f$. In region I, we proceed in a similar manner as in \cite{szekelyhidi2019degenerations}. We use \ref{S_2_Prop_Map_1} to invert $f_{1,1}$ and \ref{S_2_Prop_Map_2} for $f_{2,2}$. The key difference of our construction to that of \cite{szekelyhidi2019degenerations} is that our cutoff functions and the weighted H\"older space on the $t$-parameter.
				
				Let $\chi_{i,t}\equiv 1$ on the ball $B_{v_{0}^{i}}(B_{i}|v_{0}^{i}|^{q/p}t^{(q-p)/p})$ in the base space and vanishes outside $B_{v_{0}^{i}}(2B_{i}|v_{0}^{i}|^{q/p}t^{(q-p)/p})$. And similarly define another cutoff fucntion, $\widetilde{X_{i,t}}\equiv 1$ on $B_{v_{0}^{i}}(2B_{1}|v_{0}^{i}|^{q/p}t^{(q-p)/p})$, and vanishes outside the $B_{v_{0}^{i}}(3B_{1}|v_{0}^{i}|^{q/p}t^{(q-p)/p})$. We shall now define the approximate inverse on region I
					\begin{equation}\label{S_4_Eqn_RI_1}
						P_{1}'(f)=P_{0}(\beta_{1,1}f_{1})+\sum_{j}\widetilde{\beta_{1,2}} \widetilde{\chi_{i,t}}|v_{0}^{i}|^{2q/p}t^{2(q-p)/p}P_{1}(\chi_{i,t}\beta_{1,2}f_{1}).
					\end{equation}
					\begin{lemma}\label{S_4_Lemma_Laplacian_1}
						We shall now estimate difference of the $\Delta_{g_{\phi_{t}'}}P_{1}'(f)$ and $f_{1}$ on $U_{1}'$
							\begin{equation*}
								\|\Delta_{g_{\phi_{t}'}}\widetilde{\beta_{1}}P_{1}'(f)-f_{1}\|_{C^{0,\alpha}_{\delta_{0},\tau}(U_{1}')}<\epsilon(\Lambda_{1},B_{1},A_{1}).
							\end{equation*}
					\end{lemma}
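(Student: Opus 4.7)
The plan is to expand $\Delta_{g_{\phi_{t}'}}(\widetilde{\beta_{1}}P_{1}'(f))$ by the product rule and reduce the computation, summand by summand, to (a) the identities $\Delta_{\CC\times V_{0}}P_{0}=\mathrm{Id}$ and $\Delta_{\CC\times V_{1}}P_{1}=\mathrm{Id}$ supplied by Propositions \ref{S_2_Prop_Map_2} and \ref{S_2_Prop_Map_3}, and (b) the metric comparison furnished by Lemma \ref{S_3_Lemma_Met_1} together with the logarithmic design of the cutoffs $\widetilde{\beta_{1}},\widetilde{\beta_{1,2}},\widetilde{\chi_{i,t}}$. The errors will fall into three types: commutator errors produced by differentiating the cutoffs; metric-replacement errors produced by swapping $\Delta_{g_{\phi_{t}'}}$ for the appropriate model Laplacian on the support of each cutoff; and overlap errors from the partition identity $\beta_{1,1}+\beta_{1,2}=1$.

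Writing $\Delta(\widetilde{\beta_{1}}g)=\widetilde{\beta_{1}}\Delta g+2\langle\nabla\widetilde{\beta_{1}},\nabla g\rangle+(\Delta\widetilde{\beta_{1}})g$, the commutator pieces are controlled, via Lemma \ref{S_4_Lemma_Multiplication} and the cutoff estimates of Section \ref{S_4_1}, by $(C/\ln A_{1})\|P_{1}'(f)\|_{C^{2,\alpha}_{\mathrm{model},\boldsymbol{\delta},\tau,t}}$, which in turn is bounded by a constant times $\|f\|_{C^{0,\alpha}_{\boldsymbol{\delta-2},\tau-2,t}}$ using Definition \ref{S_4_Defn_Rightinverse}. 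For the surviving principal term $\widetilde{\beta_{1}}\Delta_{g_{\phi_{t}'}}P_{1}'(f)$, I would treat each summand separately. For $P_{0}(\beta_{1,1}f_{1})$, the support of $\beta_{1,1}$ lies in $U_{1,1}'$ where Lemma \ref{S_3_Lemma_Met_1} gives $\|g_{\phi_{t}'}-g_{\CC\times V_{0}}\|_{C^{k,\alpha}_{0,0}}\le\epsilon(\Lambda_{1})$, so that
\[
\bigl\|(\Delta_{g_{\phi_{t}'}}-\Delta_{\CC\times V_{0}})P_{0}(\beta_{1,1}f_{1})\bigr\|_{C^{0,\alpha}_{\delta_{0}-2,\tau-2}}\lesssim \epsilon(\Lambda_{1})\|\beta_{1,1}f_{1}\|_{C^{0,\alpha}_{\delta_{0}-2,\tau-2}},
\]
and the model identity leaves $\widetilde{\beta_{1}}\beta_{1,1}f_{1}=\beta_{1,1}f_{1}$ (since $\widetilde{\beta_{1}}\equiv1$ on $\mathrm{supp}\,\beta_{1}$). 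For the scaled summand, one first pulls out $|v_{0}^{i}|^{2q/p}t^{2(q-p)/p}$ so that the model Laplacian becomes $\Delta_{\CC\times V_{1}}$ in the scaled coordinates; again by Lemma \ref{S_3_Lemma_Met_1} the metrics agree up to $\epsilon(\Lambda_{1},B_{1},A_{1})$ in $C^{k,\alpha}_{0}$ on $U_{1,i}$, and the cutoff identities $\widetilde{\chi_{i,t}}\chi_{i,t}=\chi_{i,t}$, $\widetilde{\beta_{1,2}}\beta_{1,2}=\beta_{1,2}$ convert the main term into $\sum_{i}\chi_{i,t}\beta_{1,2}f_{1}=\beta_{1,2}f_{1}$, using that $\{\chi_{i,t}\}$ covers $\mathrm{supp}\,\beta_{1,2}$. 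Summing with step one gives $(\beta_{1,1}+\beta_{1,2})f_{1}=f_{1}$, and the accumulated errors are $\epsilon(\Lambda_{1},B_{1},A_{1})\|f\|$.

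The hard part will be the scaled second summand: one must verify that the metric comparison and the action of $\Delta_{g_{\phi_{t}'}}$ on $P_{1}(\chi_{i,t}\beta_{1,2}f_{1})$, together with the commutators against $\widetilde{\beta_{1,2}}$ and $\widetilde{\chi_{i,t}}$, all combine to a bound in $C^{0,\alpha}_{\delta_{0}-2,\tau-2}(U_{1}')$ that is uniform in the base point $v_{0}^{i}$ and small as $\Lambda_{1},B_{1},A_{1}\to\infty$. This hinges on the scaling-invariance of $\|\cdot\|_{C^{k,\alpha}_{\boldsymbol{0},0,t}}$ and on the fact that the logarithmic cutoffs $\widetilde{\chi_{i,t}},\widetilde{\beta_{1,2}}$ have weighted $C^{0,\alpha}_{-1,-1}$ norms of order $1/\ln B_{1}$ and $1/\ln\Lambda_{1}$, respectively, so that after one applies the multiplication law (Lemma \ref{S_4_Lemma_Multiplication}) the product with the $C^{2,\alpha}_{\mathrm{model},\boldsymbol{\delta},\tau,t}$-bounded $P_{1}$-output falls into the correct weighted class with a small prefactor. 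Once this bookkeeping is in place, choosing $A_{1},B_{1},\Lambda_{1}$ sufficiently large delivers the claimed estimate.
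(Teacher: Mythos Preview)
Your approach is essentially the same as the paper's: expand by the product rule, invoke the model identities $\Delta_{\CC\times V_{0}}P_{0}=\mathrm{Id}$ and $\Delta_{\CC\times V_{1}}P_{1}=\mathrm{Id}$, replace $\Delta_{g_{\phi_{t}'}}$ by the model Laplacian on each piece using Lemma \ref{S_3_Lemma_Met_1}, and absorb the commutators with the cutoffs via the multiplication law. One small correction: $\widetilde{\chi_{i,t}}$ as defined just above \eqref{S_4_Eqn_RI_1} is \emph{not} a logarithmic cutoff but an ordinary one (transitioning between radii $2B_{1}$ and $3B_{1}$ times $|v_{0}^{i}|^{q/p}t^{(q-p)/p}$), so its commutator contributes a factor of order $B_{1}^{-1}$ rather than $(\ln B_{1})^{-1}$; this is harmless for the conclusion since $B_{1}\to\infty$ anyway.
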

					\begin{proof}
						We shall use the approximation result for the metric in Lemma \ref{S_3_Lemma_Met_1}, in the region $U_{1,1}':=\{R_{\tilde{z}}>t^{(q-p)/p}\Lambda_{1}^{3/4}\rho_{0}^{q/p}\}$, the metric $g_{\phi_{t}'}$ is approximated by $g_{\CC\times V_{0}}$, and therefore
							\begin{equation*}
								\begin{split}
									&\Delta_{g_{\phi_{t}'}}\widetilde{\beta_{1,1}}P_{0}(f_{1,1})-f_{1,1}=(\Delta_{g_{\phi_{t}'}}\widetilde{\beta_{1,1}})P_{0}(f_{1,1})\\
									&+2\nabla\widetilde{\beta_{1,1}}\cdot \nabla P_{0}(f_{1,1})+(\Delta_{g_{\phi_{t}'}}-\Delta_{\CC\times V_{0}})f_{1,1}.
								\end{split}
							\end{equation*}
						Combine with estimate for $\widetilde{\beta_{1,1}}$
							\begin{equation*}
								\|\Delta_{g_{\phi_{t}'}}\widetilde{\beta_{1,1}}P_{0}(f_{1,1})-f_{1,1}\|_{C^{0,\alpha}_{\delta_{0}-2,\tau-2,t}(U_{1}')}<\epsilon(\Lambda_{1},A_{1})\|f\|_{C^{0,\alpha}_{\delta_{0}-2,\tau-2,t}(U_{1}')}.
							\end{equation*}
						Now on the region $U_{1,2,i}'$
							\begin{equation*}
								U_{1,2,i}':=\{|z_{n+2}-v_{0}^{i}|<3B_{1}t^{(q-p)/p}|v_{0}^{i}|^{q/p}, R_{\tilde{z}}<2t^{(q-p)/p}\Lambda_{1}^{2}\rho_{0}^{q/p}\}.
							\end{equation*}
						The metric is approximated by $t^{2(q-p)/p}|v_{0}^{i}|^{2q/p}g_{\CC\times V_{1}}$, and use lemma \ref{S_3_Lemma_Met_1} again
							\begin{equation*}
								\begin{split}
									&\|(\Delta_{g_{\phi_{t}'}}\widetilde{\beta_{1,2}}\widetilde{\chi_{i,t}}|v_{0}^{i}|^{2q/p}t^{2(q-p)/p}P_{1})(\chi_{i,t}f_{1,2})-\chi_{i,t}f_{1,2}\|_{C^{0,\alpha}_{\delta_{0},\tau}(U_{1,2,i}')}\\
									&<\epsilon(\Lambda_{1},B_{1},A_{1})\|f\|_{C^{0,\alpha}_{\delta_{0},\tau}(U_{1,2,i}')}.
								\end{split}
							\end{equation*}
						And use \ref{S_4_Lemma_Multiplication} we prove the lemma. 
					\end{proof}
			\subsubsection*{Region II}
				We can choose $U_{2,i}'=\{\rho_{i}<A_{2}^{2}t^{\sigma_{i}'}\}$ and in those region, we define
					\begin{equation}\label{S_4_Eqn_RI_2}
						P_{2,i}'(f)=t^{-2\theta_{i}}P_{2,i}(f_{2,i}).
					\end{equation}
				We then have the following lemma:
					\begin{lemma}\label{S_4_Lemma_Laplacian_2}
						If $t$ is sufficiently large, then the difference between $\Delta_{g_{\phi_{t}'}}P_{2,i}'f$ and $f$ can be bounded by
							\begin{equation*}
								\|\Delta_{g_{\phi_{t}'}}\widetilde{\beta_{2,i}}P_{2,i}'(f)-f_{2,i}\|_{C^{0,\alpha}_{\delta_{i}-2,\tau-2}(U_{2,i}',g_{\phi_{i}})}<\epsilon(A_{2})\|f\|_{C^{0,\alpha}_{\delta_{i}-2,\tau-2}(U_{2,i}',g_{\phi_{i}})}.
							\end{equation*}
					\end{lemma}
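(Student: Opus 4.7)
The plan is to mimic the argument of Lemma \ref{S_4_Lemma_Laplacian_1}, but carrying out every estimate in the scaled coordinates on $X_i$ where the approximate solution is modeled on $g_{\phi_i}$. First I observe that since $t^{2\theta_i}$ is a constant, the scalar Laplacian satisfies $\Delta_{g_{\phi_t'}} = t^{2\theta_i}\Delta_{t^{2\theta_i}g_{\phi_t'}}$, so that
\begin{equation*}
\Delta_{g_{\phi_t'}}\bigl(\widetilde{\beta_{2,i}}\,t^{-2\theta_i}P_{2,i}(f_{2,i})\bigr)=\Delta_{t^{2\theta_i}g_{\phi_t'}}\bigl(\widetilde{\beta_{2,i}}P_{2,i}(f_{2,i})\bigr).
\end{equation*}
Denote $g_{i,t} := t^{2\theta_i}g_{\phi_t'} = g_{\widetilde{\phi_{i,t}}}$. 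Expanding via Leibniz and using that $\widetilde{\beta_{2,i}}\equiv 1$ on $\mathrm{supp}(\beta_{2,i})\supseteq \mathrm{supp}(f_{2,i})$, the error splits as
\begin{equation*}
\Delta_{g_{i,t}}\bigl(\widetilde{\beta_{2,i}}P_{2,i}(f_{2,i})\bigr)-f_{2,i}= (\Delta_{g_{i,t}}\widetilde{\beta_{2,i}})P_{2,i}(f_{2,i})+2\langle\nabla\widetilde{\beta_{2,i}},\nabla P_{2,i}(f_{2,i})\rangle_{g_{i,t}}+\widetilde{\beta_{2,i}}(\Delta_{g_{i,t}}-\Delta_{g_{\phi_i}})P_{2,i}(f_{2,i}).
\end{equation*}

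For the cutoff terms (the first two), I use the bounds
\begin{equation*}
\|\nabla\widetilde{\beta_{2,i}}\|_{C^{0,\alpha}_{-1,-1}(U_{2,i}',g_{\phi_i})}<\tfrac{C}{\ln A_2},\qquad \|\Delta_{g_{\phi_i}}\widetilde{\beta_{2,i}}\|_{C^{0,\alpha}_{-2,-2}(U_{2,i}',g_{\phi_i})}<\tfrac{C}{\ln A_2}
\end{equation*}
from Section \ref{S_4_1}, combined with the mapping property of $P_{2,i}$ from Definition \ref{S_4_Defn_Rightinverse} which gives $P_{2,i}(f_{2,i})\in C^{2,\alpha}_{\delta_i,\tau}$. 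Applying the multiplication Lemma \ref{S_4_Lemma_Multiplication} (transferred to the scaled model) then produces the bound $\epsilon(A_2)\|f\|_{C^{0,\alpha}_{\delta_i-2,\tau-2}}$ for these two pieces, since the weights shift by $-2$ in $\delta_i$ and $\tau$ to land in the target space. To account for the fact that we use $g_{i,t}$ instead of $g_{\phi_i}$ in the gradient pairing, I invoke Lemma \ref{S_3_Lemma_Met_2}, which says $\|g_{i,t}-g_{\phi_i}\|_{C^{0,\alpha}_{0,0}}<\epsilon(t)$; this only perturbs the constant.

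The main obstacle is the third term, controlling $(\Delta_{g_{i,t}}-\Delta_{g_{\phi_i}})P_{2,i}(f_{2,i})$. Here the crucial observation (noted in the opening paragraph of Section \ref{S_4}) is that for the scalar Laplacian on a Kähler manifold, $\Delta_g u = g^{j\bar k}\partial_j\partial_{\bar k}u$ with no Christoffel symbols, so the difference of Laplacians applied to a function depends only on the $C^{0}$ difference of the inverse metric tensors:
\begin{equation*}
(\Delta_{g_{i,t}}-\Delta_{g_{\phi_i}})u=(g_{i,t}^{j\bar k}-g_{\phi_i}^{j\bar k})\partial_j\partial_{\bar k}u.
\end{equation*}
Thus a $C^{0,\alpha}_{0,0}$ bound on $g_{i,t}-g_{\phi_i}$, which is precisely what Lemma \ref{S_3_Lemma_Met_2} delivers (with an $\epsilon(t)$ factor), suffices—no $C^{1,\alpha}$ control of the metric is required. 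Bounding $\partial\bar\partial P_{2,i}(f_{2,i})$ in $C^{0,\alpha}_{\delta_i-2,\tau-2}$ via the mapping property of $P_{2,i}$ and combining with the multiplication Lemma \ref{S_4_Lemma_Multiplication} yields the desired estimate, with $\epsilon(A_2)$ dominating once $t$ is taken sufficiently large (so $\epsilon(t)<\epsilon(A_2)$). Summing the three contributions completes the proof.
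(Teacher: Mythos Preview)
Your proposal is correct and follows essentially the same approach as the paper: the Leibniz expansion into three terms, the cutoff contributions controlled by the $\tfrac{C}{\ln A_2}$ bounds from Section~\ref{S_4_1}, and the crucial observation that since $F_{i,t}$ is biholomorphic the K\"ahler Laplacian difference is governed by the $C^{0,\alpha}_{0,0}$ metric difference from Lemma~\ref{S_3_Lemma_Met_2}. You have in fact supplied more detail than the paper, which dispatches the proof in two sentences.
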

					\begin{proof}
						The proof follows from direct computation
							\begin{equation*}
								\begin{split}
									&\Delta_{g_{\phi_{t}'}}\widetilde{\beta_{2,i}}P_{2,i}'(f)-f_{2,i}=(\Delta_{g_{\phi_{t}'}}\widetilde{\beta_{2,i}})P_{2,i}'(f)\\
									&+2\nabla \widetilde{\beta_{2,i}}\cdot \nabla P_{2,i}'(f)+\widetilde{\beta_{2,i}}(\Delta_{g_{\phi_{t}'}}-\Delta_{t^{-2\theta_{i}}g_{\phi_{i}}})P_{2,i}'(f).
								\end{split}
							\end{equation*}
						Although in \ref{S_3_Lemma_Met_2} we only obtain a $C^{0,\alpha}_{0,0}$ bound for $t^{2\theta_{i}}g_{\phi_{t}'}-g_{\phi_{i}}$, the gluing map $F_{i}$ is biholomorphic. Henceforth the difference between those two K\"ahler metric is bounded by their $C^{0,\alpha}_{0,0}$ difference.
					\end{proof}
				\subsubsection*{Region III}
					We cover the region $U_{3}'=\{|z_{n+2}|<A_{1}^{2},\rho_{i}>A_{2}^{3/4}t^{\sigma_{i}'},R_{\tilde{z}}<t^{(q-p)/p}\Lambda_{2}^{2}\}$. Cover $U_{3}'$ by $N(t)\sim O(t^{(p-q)/p})$ many open sets $U_{3,i}'$
						\begin{equation*}
							U_{3,i}':=\{|z_{n+2}-v_{0}^{i,t}|<3B_{2}t^{(q-p)/p},R_{\tilde{z}}<t^{(q-p)/p}\Lambda_{2}^{2}\}.
						\end{equation*}
					We define the cutoff functions $\chi_{t,i}'$ such that $\chi_{t,i}'\equiv 1$ on $B_{v_{0}^{i,t}}(B_{2}t^{(q-p)/p})$ and vanish outside $B_{v_{0}^{i,t}}(2B_{2}t^{(q-p)/p})$, while $\widetilde{\chi_{t,i}'}\equiv 1$  on $B_{v_{0}^{i,t}}(2B_{2}t^{(q-p)/p})$ and vanish outside $B_{v_{0}^{i,t}}(3B_{2}t^{(q-p)/p})$. Now we define the approximate right inverse
						\begin{equation}\label{S_4_Eqn_RI_3}
							P_{3}'(f)=\widetilde{\beta_{3}}\sum_{i}\widetilde{\chi_{t,i}'}|Q_{t}(v_{0}^{i,t})|^{2/p}t^{2(q-p)/p}P_{1}(\chi_{t,i}'f_{3}).
						\end{equation}
					Similar use \ref{S_3_Lemma_Met_3}:
						\begin{lemma}\label{S_4_Lemma_Laplacian_3}
							When $t$ is sufficiently large
								\begin{equation}
									\|\Delta_{g_{\phi_{t}'}}P_{3}'(f)-f_{3}\|_{C^{0,\alpha}_{\tau-2,t}(U_{3}')}\lesssim(B_{2}^{-1}+\ln(A_{1})^{-1}+\ln(A_{2})^{-1})\|f\|_{C^{0,\alpha}_{\tau-2,t}(U_{3}')}.
								\end{equation}
						\end{lemma}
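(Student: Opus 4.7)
The plan is to mimic the arguments in Lemmas \ref{S_4_Lemma_Laplacian_1} and \ref{S_4_Lemma_Laplacian_2}, now working in region III. Set $g_{i} := |Q_{t}(v_{0}^{i,t})|^{2/p} t^{2(q-p)/p} P_{1}(\chi_{t,i}' f_{3})$ and apply the product rule to expand
\begin{align*}
\Delta_{g_{\phi_{t}'}} P_{3}'(f) &= (\Delta_{g_{\phi_{t}'}} \widetilde{\beta_{3}}) \sum_{i} \widetilde{\chi_{t,i}'} g_{i} + 2 \nabla \widetilde{\beta_{3}} \cdot \nabla\Bigl(\sum_{i} \widetilde{\chi_{t,i}'} g_{i}\Bigr) \\
&\quad + \widetilde{\beta_{3}} \sum_{i} \bigl[ (\Delta_{g_{\phi_{t}'}} \widetilde{\chi_{t,i}'}) g_{i} + 2 \nabla \widetilde{\chi_{t,i}'} \cdot \nabla g_{i} + \widetilde{\chi_{t,i}'} \Delta_{g_{\phi_{t}'}} g_{i} \bigr].
\end{align*}
In the principal term $\widetilde{\beta_{3}} \sum_{i} \widetilde{\chi_{t,i}'} \Delta_{g_{\phi_{t}'}} g_{i}$, I split $\Delta_{g_{\phi_{t}'}} = \Delta_{\text{mod},i} + (\Delta_{g_{\phi_{t}'}} - \Delta_{\text{mod},i})$, where $\Delta_{\text{mod},i}$ denotes the Laplacian of the scaled product metric $t^{2(q-p)/p} g_{\CC \times V_{Q_{t}(v_{0}^{i,t})}}$ from Lemma \ref{S_3_Lemma_Met_3}. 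Pulling back through the biholomorphism $S_{Q_{t}(v_{0}^{i,t})}: V_{Q_{t}(v_{0}^{i,t})} \to V_{1}$ and combining the scaling of Laplacians with the defining identity $\Delta_{\CC \times V_{1}} P_{1} = \mathrm{Id}$ yields $\Delta_{\text{mod},i} g_{i} = \chi_{t,i}' f_{3}$. Since $\{\chi_{t,i}'\}$ forms a partition of unity on $U_{3}$, $\widetilde{\chi_{t,i}'} \equiv 1$ on $\mathrm{supp}\,\chi_{t,i}'$, and $\widetilde{\beta_{3}} \equiv 1$ on $\mathrm{supp}\,f_{3}$, the leading contribution collapses exactly to $f_{3}$.

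Three families of error terms remain, and I will bound each in $\|\cdot\|_{C^{0,\alpha}_{\tau-2,t}(U_{3}')}$. First, the terms involving derivatives of $\widetilde{\beta_{3}}$ are bounded by $\max\{1/\ln A_{1},\,1/\ln A_{2},\,1/\ln \Lambda_{2}\}$ using the cutoff estimates from Section \ref{S_4_1}. Second, the terms involving derivatives of $\widetilde{\chi_{t,i}'}$ are bounded by $B_{2}^{-1}$, since after the scaling coordinate change of Lemma \ref{S_3_Lemma_Met_3} the cutoff $\widetilde{\chi_{t,i}'}$ transitions over a region of fixed size $\sim B_{2}$ in the base of $\CC \times V_{Q_{t}(v_{0}^{i,t})}$. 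Third, the difference $(\Delta_{g_{\phi_{t}'}} - \Delta_{\text{mod},i}) g_{i}$ is controlled by Lemma \ref{S_3_Lemma_Met_3}, whose metric estimate of order $t^{(q-p)/p - \sigma'}$ is a negative power of $t$ and hence absorbed into the implicit constant for $t$ large. In each case, the boundedness of $P_{1}$ from Proposition \ref{S_2_Prop_Map_3} together with the multiplication law of Lemma \ref{S_4_Lemma_Multiplication} lets us pass from estimates on $P_{1}(\chi_{t,i}' f_{3})$ to the final weighted-H\"older bound on $f_{3}$.

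The main technical subtlety will be keeping the norms consistent through the coordinate change in Lemma \ref{S_3_Lemma_Met_3}. The norm $\|\cdot\|_{C^{0,\alpha}_{\tau-2,t}(U_{3}')}$ was designed precisely so as to be invariant under the scaling $z_{n+2} = v_{0}^{i,t} + a_{t}^{q-p} v_{n+2}$, $\tilde{z} = (a_{t}^{q-p} Q_{t}(z_{n+2})/Q_{t}(v_{0}^{i,t}))^{1/p} \cdot \tilde{v}$, which lets us apply Proposition \ref{S_2_Prop_Map_3} on the unit-scale model $\CC \times V_{1}$ directly to the pulled-back $\chi_{t,i}' f_{3}$. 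Once this bookkeeping is in place, summing the three error contributions gives the stated bound $B_{2}^{-1} + \ln(A_{1})^{-1} + \ln(A_{2})^{-1}$, with the $t$-dependent metric error absorbed into the implicit constant.
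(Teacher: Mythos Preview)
Your proposal is correct and is exactly the argument the paper intends: the paper's own proof is the single line ``Similar use \ref{S_3_Lemma_Met_3}'', and you have faithfully unpacked that into the product-rule expansion, the identification of the principal term via $\Delta_{\CC\times V_{1}}P_{1}=\mathrm{Id}$, and the three error contributions (cutoff $\widetilde{\beta_{3}}$, cutoff $\widetilde{\chi_{t,i}'}$, metric difference from Lemma~\ref{S_3_Lemma_Met_3}). The only wrinkle is cosmetic: your analysis of $\nabla\widetilde{\beta_{3}}$ correctly produces a $\ln(\Lambda_{2})^{-1}$ contribution (consistent with the estimate in Section~\ref{S_4_1}), which does not appear in the lemma's stated bound---this is a minor inconsistency in the paper's statement rather than a gap in your argument.
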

		\subsubsection*{Region IV}
			We choose $U_{4}'=\{|z_{n+2}|<A_{1}^{2},\rho_{i}>A_{2}^{3/4}t^{\sigma_{i}'},R_{\tilde{z}}>t^{(q-p)/p}\Lambda_{2}^{3/4}\}$, we define the right inverse
				\begin{equation}\label{S_4_Eqn_RI_4}
					P_{4}'=\widetilde{\beta_{4}}P_{3}(f_{4}).
				\end{equation}
			And we can estimate the difference:
				\begin{lemma}\label{S_4_Lemma_Laplacian_4}
					When $t$ is sufficiently large
						\begin{equation*}
							\|\Delta_{g_{\phi_{t}'}}P_{4}'(f)-f_{4}\|_{C^{0,\alpha}_{\tau}(U_{4}')}\lesssim (B_{2}^{-1}+\ln(A_{1})^{-1}+\ln(A_{2})^{-1}+\ln(\Lambda_{2})^{-1})\|f\|_{C^{0,\alpha}_{\tau}(U_{4}')}.
						\end{equation*}
				\end{lemma}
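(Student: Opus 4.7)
The plan is to mirror exactly the template used in Lemmas \ref{S_4_Lemma_Laplacian_1}, \ref{S_4_Lemma_Laplacian_2} and \ref{S_4_Lemma_Laplacian_3}. Since $\widetilde{\beta_{4}}\equiv 1$ on the support of $\beta_{4}$ we have $\widetilde{\beta_{4}}f_{4}=f_{4}$, and since $P_{3}$ is a right inverse to $\Delta_{\CC\times V_{0}}$ on $C^{2,\alpha}_{\tau}(\CC\times V_{0})$ by Proposition \ref{S_2_Prop_Map_1}, we may write
\begin{equation*}
\Delta_{g_{\phi_{t}'}}\widetilde{\beta_{4}}P_{3}(f_{4})-f_{4}=(\Delta_{g_{\phi_{t}'}}\widetilde{\beta_{4}})P_{3}(f_{4})+2\nabla\widetilde{\beta_{4}}\cdot\nabla P_{3}(f_{4})+\widetilde{\beta_{4}}(\Delta_{g_{\phi_{t}'}}-\Delta_{\CC\times V_{0}})P_{3}(f_{4}).
\end{equation*}
The task is to bound each of the three terms on the right in $C^{0,\alpha}_{\tau-2}(U_{4}')$.

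For the two commutator terms I would use the cutoff-function estimates established in Section \ref{S_4_1}, namely
\begin{equation*}
\|\nabla\widetilde{\beta_{4}}\|_{C^{1,\alpha}_{-1}(U_{4}',g_{4})}\lesssim \max\{\ln(A_{1})^{-1},\ln(A_{2})^{-1},\ln(\Lambda_{2})^{-1}\},
\end{equation*}
together with the boundedness $\|P_{3}(f_{4})\|_{C^{2,\alpha}_{\tau}(\CC\times V_{0})}\lesssim \|f_{4}\|_{C^{0,\alpha}_{\tau-2}(\CC\times V_{0})}$ from Proposition \ref{S_2_Prop_Map_1}, and then the multiplication law of Lemma \ref{S_4_Lemma_Multiplication} to combine the two factors. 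This gives contributions to the error bounded by the stated logarithmic factors times $\|f\|_{C^{0,\alpha}_{\tau-2,t}(U_{4}')}$.

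For the Laplacian-difference term the main input is the first estimate of Lemma \ref{PROP_DECAY_C}: on $U_{4}$ one has $\|g_{\phi_{t}'}-g_{\CC\times V_{0}}\|_{C^{k,\alpha}_{0}(U_{4})}\lesssim \Lambda_{2}^{-2-c}$. Writing the Laplacian in local coordinates adapted to $g_{\CC\times V_{0}}$, the difference $\Delta_{g_{\phi_{t}'}}-\Delta_{\CC\times V_{0}}$ is a second-order operator whose coefficients are controlled by the $C^{0,\alpha}_{0}$ (resp. $C^{1,\alpha}_{-1}$) difference of the inverse metrics and of the Christoffel symbols, both of which are $\lesssim \Lambda_{2}^{-2-c}$. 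Applied to $P_{3}(f_{4})\in C^{2,\alpha}_{\tau}$, this produces a contribution bounded by $\Lambda_{2}^{-2-c}\|f\|_{C^{0,\alpha}_{\tau-2,t}(U_{4}')}$, which can be absorbed into $\ln(\Lambda_{2})^{-1}\|f\|$, up to a possible $B_{2}^{-1}$ piece coming from the transition with region $U_{3}$ where the cutoffs $\widetilde{\beta_{3}}$ and $\widetilde{\beta_{4}}$ overlap.

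The main subtlety, as already in Lemma \ref{S_4_Lemma_Laplacian_2}, is purely bookkeeping: the operator $P_{3}$ is defined on the model $\CC\times V_{0}$ whereas $\Delta_{g_{\phi_{t}'}}$ acts on $X_{t}$, so one must identify a neighborhood of $U_{4}'\subset X_{t}$ with an open subset of $\CC\times V_{0}$ via the projection map $G_{0}$ used in Lemma \ref{PROP_DECAY_C} and verify that the weighted H\"older norms $C^{k,\alpha}_{\tau}$ agree on the two sides up to the error controlled by Lemma \ref{PROP_DECAY_C}. Once this identification is in place, assembling the three bounds above yields the claimed inequality, with the constants in the RHS coming from the cutoff logarithmic smallness plus the power decay $\Lambda_{2}^{-2-c}$ and the overlap factor $B_{2}^{-1}$.
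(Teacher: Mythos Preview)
Your proposal is correct and follows exactly the same approach as the paper, whose proof is the single sentence ``Use \ref{S_3_Lemma_RicciMet_4} we prove the lemma.'' You have simply written out the standard three-term decomposition and identified the relevant inputs (the cutoff estimates from Section \ref{S_4_1} for the commutator terms, and the metric comparison of Lemma \ref{PROP_DECAY_C} for the Laplacian-difference term), which is precisely what the paper's one-line proof is invoking implicitly. One minor remark: your justification of the $B_{2}^{-1}$ contribution via ``overlap with $U_{3}$'' is not quite right, since $P_{4}'=\widetilde{\beta_{4}}P_{3}(f_{4})$ involves no $B_{2}$-scale cutoff at all; the $B_{2}^{-1}$ in the stated bound appears to be harmless padding (or a slip) rather than something your argument needs to produce.
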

				\begin{proof}
					Use \ref{S_3_Lemma_RicciMet_4} we prove the lemma.
				\end{proof}
			
		We define $P$ as follow, and then justify that $P$ is the approximate right inverse for the scalar Laplacian.
			\begin{equation}\label{S_4_Eqn_RI}
				P=\widetilde{\beta_{1}}P_{1}'+\widetilde{\beta_{2}}P_{2}'+P_{3}'+P_{4}'.
			\end{equation}
		Although we are only able to control $P(f)$ in $C^{2,\alpha}_{\text{model}, \bs{\delta},\tau,t}$, but not in $C^{2,\alpha}_{\bs{\delta},\tau,t}$, we do know the behavior of $\Delta_{g_{\phi_{t}'}}P$ and $\II\p \pb P$. 
			\begin{prop}\label{S_4_Prop_RI}
				For any $\epsilon>0$, we first choose $\Lambda_{1}, A_{1}, B_{1}\to \infty$ and then choose $A_{2}, \Lambda_{2}$ large enough, and in the end choose $B_{2}$ sufficiently large, and then let the collapsing parameter $t$ goes to infinity, the the operator norm of $\Delta_{g_{\phi_{t}'}}P-Id$ on $C^{0,\alpha}_{\bs{\delta-2},\tau-2,t}$
					\begin{equation*}
						\|\Delta_{g_{\phi_{t}'}}P-Id\|<\epsilon.
					\end{equation*}
				And moreover $\II\p \pb P$ is a bounded operator on $C^{0,\alpha}_{\bs{\delta-2},\tau-2,t}$, with bound independent on $t$.
			\end{prop}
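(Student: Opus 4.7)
The plan is to reduce the global estimate to the four region-wise estimates already established in Lemmas \ref{S_4_Lemma_Laplacian_1}--\ref{S_4_Lemma_Laplacian_4}. Given $f \in C^{0,\alpha}_{\bs{\delta-2},\tau-2,t}$, I decompose $f = f_1 + \sum_{i=1}^d f_{2,i} + f_3 + f_4$ using the cutoffs $\beta_{1}, \beta_{2,i}, \beta_{3}, \beta_{4}$, which form a partition of unity subordinate to the model regions. Since $\|\beta_j\|_{C^{0,\alpha}_{\bs{0},0,t}}$ is uniformly bounded independent of $t$, Lemma \ref{S_4_Lemma_Multiplication} shows $\|f_j\|_{C^{0,\alpha}_{\bs{\delta-2},\tau-2,t}} \lesssim \|f\|_{C^{0,\alpha}_{\bs{\delta-2},\tau-2,t}}$. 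Applying $\Delta_{g_{\phi_t'}}$ to $P(f)$ and writing
\begin{equation*}
    \Delta_{g_{\phi_t'}} P(f) - f = \sum_{j} \bigl(\Delta_{g_{\phi_t'}} \widetilde{\beta}_{j} P_j'(f) - f_j\bigr),
\end{equation*}
the right-hand side is precisely the sum of the quantities bounded in the four lemmas, summed in the corresponding component of the direct-sum norm $C^{0,\alpha}_{\bs{\delta-2},\tau-2,t}$.

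For each piece the error has the universal shape $(\Delta_{g_{\phi_t'}} \widetilde{\beta}_j) P_j'(f) + 2\nabla \widetilde{\beta}_j \cdot \nabla P_j'(f) + \widetilde{\beta}_j (\Delta_{g_{\phi_t'}} - \Delta_{\text{model}}) P_j'(f)$. The first two terms are controlled by the H\"older norms of $\nabla \widetilde{\beta}_j$, which decay like $1/\ln(\Lambda_1)$, $1/\ln(A_2)$, $1/\ln(\Lambda_2)$ respectively in the appropriate weighted norms; combined with the boundedness of $P_j$ on the model space (Propositions \ref{S_2_Prop_Map_1}--\ref{S_2_Prop_Map_3}, \ref{MAP_LAPLACIAN}) and Lemma \ref{S_4_Lemma_Multiplication}, these pieces are small once the parameters $\Lambda_1, A_1, B_1, A_2, \Lambda_2, B_2$ are chosen large enough in the prescribed order. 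The third term is controlled by Lemmas \ref{S_3_Lemma_Met_1}, \ref{S_3_Lemma_Met_2}, \ref{S_3_Lemma_Met_3}, \ref{S_3_Lemma_RicciMet_4}, which compare $g_{\phi_t'}$ to each model metric in $C^{0,\alpha}_{\bs{0},0,t}$ (or the analogous model norm). Summing over all regions and invoking the multiplication lemma once more gives $\|\Delta_{g_{\phi_t'}} P - Id\| < \epsilon$.

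The main obstacle is Region II, where Lemma \ref{S_3_Lemma_Met_2} only provides a $C^{0,\alpha}_{\bs{0},0,t}$ bound for $g_{\widetilde{\phi_{i,t}}} - g_{\phi_i}$, so the difference of Christoffel symbols need not even be defined pointwise. The crucial observation, emphasized in the preamble of Section \ref{S_4}, is that on a K\"ahler manifold the scalar Laplacian depends only on the metric tensor and not on its derivatives, so the operator norm of $\Delta_{g_{\phi_t'}} - \Delta_{t^{-2\theta_i}g_{\phi_i}}$ acting on $C^{2,\alpha}_{\delta_i, \tau}$ to $C^{0,\alpha}_{\delta_i-2,\tau-2}$ is dominated by $\|g_{\widetilde{\phi_{i,t}}} - g_{\phi_i}\|_{C^{0,\alpha}_{0,0}}$, which tends to $0$ by Lemma \ref{S_3_Lemma_Met_2}. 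The fact that $F_{i,t}$ is a biholomorphism, not merely a diffeomorphism, is essential here: it preserves the complex structure so the K\"ahler identity $\Delta = 2g^{j\bar k}\p_j\p_{\bar k}$ applies without error.

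Finally, to see that $\II \p \pb P$ is bounded on $C^{0,\alpha}_{\bs{\delta-2},\tau-2,t}$ with bound independent of $t$, observe that each bounded right inverse $P_0, P_1, P_{2,i}, P_3$ in Definition \ref{S_4_Defn_Rightinverse} produces a $C^{2,\alpha}$ solution with control on its full Hessian in the relevant weighted model norm. Since $\II \p \pb$ on a K\"ahler manifold equals the $(1,1)$-projection of the Hessian, it is bounded from $C^{2,\alpha}_{\text{model},\bs{\delta},\tau,t}$ to $C^{0,\alpha}_{\bs{\delta-2},\tau-2,t}$ with a constant depending only on the geometry of the model metrics, not on $t$; the cutoff factors $\widetilde{\beta}_j$ and the scale factors like $|v_0^i|^{2q/p}t^{2(q-p)/p}$ built into $P$ are precisely calibrated so that the weights match across regions. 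Composing with the inverse $(\Delta_{g_{\phi_t'}} P)^{-1}$ produced in \ref{S_4_Defn_R} (which exists by the first part with operator norm $\le 2$) yields the uniform bound for $\mathscr{R}$.
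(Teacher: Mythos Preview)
Your proposal is correct and follows essentially the same approach as the paper's proof, which simply cites the four region-wise lemmas for the first part and, for the second part, notes that on $U_1', U_3', U_4'$ the weighted $C^{2,\alpha}$ metric comparison makes the boundedness of $\II\p\pb P$ trivial, while on $U_{2,i}'$ the biholomorphism $F_{i,t}$ makes the $\II\p\pb$ operators on $X_t$ and $X_i$ literally coincide, after which the uniform equivalence of $C^{0,\alpha}_{\bs{\delta-2},\tau-2,t}$ and $C^{0,\alpha}_{\text{model},\bs{\delta-2},\tau-2,t}$ finishes the argument. Your writeup is in fact more explicit than the paper's terse proof: you spell out the universal error shape and the role of Lemma~\ref{S_4_Lemma_Multiplication}, and you correctly isolate the Region~II subtlety (only $C^{0,\alpha}$ metric control) and its resolution via the K\"ahler identity $\Delta = g^{j\bar k}\p_j\p_{\bar k}$, which is exactly the point the paper uses in Lemma~\ref{S_4_Lemma_Laplacian_2}.
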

			\begin{proof}
				The first part follows directly from Lemma \ref{S_4_Eqn_RI_1}, \ref{S_4_Eqn_RI_2}, \ref{S_4_Eqn_RI_3}, \ref{S_4_Eqn_RI_4}. On the region $U_{1}', U_{3}', U_{4}'$ the second part is trivial since we have at least weighted $C^{2,\alpha}$ estimate for the metric. Meanwhile on $U_{2,i}'$, the gluing map $F_{i}$ are biholomorphism, which implies the $\II\p \pb$ operators are the same, and therefore we can use the fact that $C^{0,\alpha}_{\bs{\delta-2},\tau-2,t}$ and $C^{0,\alpha}_{\text{model},\bs{\delta-2},\tau-2,t}$ are uniformly equivalent to prove the proposition.
			\end{proof}
	\subsection{Collapsing Warped QAC Calabi-Yau metric}\label{S_4_3}
		In this section we carry out our gluing construction.
		
		\begin{theorem}\label{S_4_Thm_Gluing}
			Let $\alpha,\tau, \sigma_{i}'-\sigma_{i}$ sufficiently small. Assume $\delta_{i}\to \frac{2q_{i}}{p}, 1\leq i\leq d$ and $\bs{\delta}$ avoid the discrete sets of indicial roots as in Proposition \ref{MAP_LAPLACIAN}. Then the unique K\"ahler metric $\II \p \pb\phi_{t}$ solving the Calabi-Yau equation, approximated by $\II\p \pb\phi_{t}'$ near the infinity, is close to $\II \p \pb \phi_{t}'$, with the bound
				\begin{equation}
					\|\II \p \pb(\phi_{t}'-\phi_{t})\|_{C^{0,\alpha}_{\bs{\delta-2},\tau-2,t}}\lesssim t^{-(2-\tau)(p-q)/p-a},
				\end{equation}
			where $a$ is chosen in Proposition \ref{S_3_Prop_Ricci}, such that the difference is bounded by $\epsilon(t)$ on each scaled model spaces.
		\end{theorem}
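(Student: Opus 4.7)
The strategy is to recast \ref{S_4_Eqn_CY} as a fixed-point equation on $C^{0,\alpha}_{\bs{\delta-2},\tau-2,t}$ and close it by a Banach contraction argument, where the Ricci potential smallness from Proposition \ref{S_3_Prop_Ricci} supplies the inhomogeneous term and Proposition \ref{S_4_Prop_RI} gives the needed operator bounds on $\mathscr{R}$. Writing $\omega_{\phi_{t}'}^{n+1} = e^{h_{t}}\II^{(n+1)^{2}}\Omega_{X_{t}}\wedge\overline{\Omega_{X_{t}}}$, the Calabi--Yau equation becomes $(\omega_{\phi_{t}'}+\mathscr{R}f)^{n+1}/\omega_{\phi_{t}'}^{n+1}=e^{-h_{t}}$. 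Binomial expansion yields
\begin{equation*}
\mathrm{tr}_{\omega_{\phi_{t}'}}\mathscr{R}f + N(f) = e^{-h_{t}}-1,
\end{equation*}
where $N(f)$ collects terms of degree $\geq 2$ in $\mathscr{R}f$. By the very definition \ref{S_4_Defn_R} of $\mathscr{R}$ and the invertibility of $\Delta_{\phi_{t}'}P$ guaranteed by Proposition \ref{S_4_Prop_RI}, one has $\mathrm{tr}_{\omega_{\phi_{t}'}}\mathscr{R}f=\Delta_{\phi_{t}'}P(\Delta_{\phi_{t}'}P)^{-1}f=f$, so the problem reduces to finding a fixed point of $T(f):=(e^{-h_{t}}-1)-N(f)$.

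For the inhomogeneous term, Proposition \ref{S_3_Prop_Ricci} together with Lemma \ref{S_4_Lemma_Multiplication} applied to the Taylor expansion of $e^{-h_{t}}-1$ gives $\|e^{-h_{t}}-1\|_{C^{0,\alpha}_{\bs{\delta-2},\tau-2,t}}\lesssim t^{-(2-\tau)(p-q)/p-a}$. For the nonlinear part, Proposition \ref{S_4_Prop_RI} ensures that $\mathscr{R}$, viewed as a map into $(1,1)$-forms measured in the scale-invariant tensor norm, is bounded uniformly in $t$; combining this with Lemma \ref{S_4_Lemma_Multiplication} (trading one weighted factor for a scale-invariant one) and the elementary fact that each summand of $N(f)$ involves at least two copies of $\mathscr{R}f$, one obtains
\begin{equation*}
\|N(f)\|_{C^{0,\alpha}_{\bs{\delta-2},\tau-2,t}} \lesssim \|f\|_{C^{0,\alpha}_{\bs{\delta-2},\tau-2,t}}\,\|\mathscr{R}f\|_{C^{0,\alpha}_{\bs{0},0,t}},
\end{equation*}
and an analogous estimate for $N(f_{1})-N(f_{2})$ in terms of $\|f_{1}-f_{2}\|\cdot(\|\mathscr{R}f_{1}\|_{\bs{0},0,t}+\|\mathscr{R}f_{2}\|_{\bs{0},0,t})$.

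Now fix $r:=C_{0}t^{-(2-\tau)(p-q)/p-a}$ for a large constant $C_{0}$ and consider the closed ball $B_{r}\subset C^{0,\alpha}_{\bs{\delta-2},\tau-2,t}$. By the second clause of Proposition \ref{S_3_Prop_Ricci}, any $f\in B_{r}$ satisfies $\|f\|_{C^{0,\alpha}_{\bs{0},0,t}}\lesssim \epsilon(t)$, and the same holds for $\mathscr{R}f$ by the scale-invariant boundedness. For $t$ large enough this forces $\|N(f)\|\ll r$ and $T$ maps $B_{r}$ into itself as a contraction of Lipschitz constant $\lesssim\epsilon(t)$. The Banach fixed-point theorem produces a unique $f\in B_{r}$ with $T(f)=f$; setting $u=P(\Delta_{\phi_{t}'}P)^{-1}f$ gives $\omega_{\phi_{t}}=\omega_{\phi_{t}'}+\II\p\pb u$ solving the complex Monge--Amp\`ere equation. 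Local Evans--Krylov--Schauder estimates lift $u$ to a smooth potential, and identification with the unique warped-QAC Calabi--Yau metric of Theorem \ref{Thm_1_Detail} follows because $u\in C^{2,\alpha}_{\bs\delta,\tau,t}$ near infinity falls into the uniqueness class there.

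The main obstacle is the mismatch between the weighted $C^{2,\alpha}_{\bs{\delta},\tau,t}$ control one would naively want on $u=P(\Delta P)^{-1}f$ and the weaker $C^{2,\alpha}_{\mathrm{model},\bs{\delta},\tau,t}$ control actually available, owing to the merely $C^{0,\alpha}$ metric approximation in the gluing annuli of region II (Lemma \ref{S_3_Lemma_Met_2}). The resolution is structural: the Monge--Amp\`ere nonlinearity sees only the $(1,1)$-form $\II\p\pb u=\mathscr{R}f$, not individual covariant derivatives of $u$, and since the gluing diffeomorphisms $F_{i,t}$ are biholomorphic the $\II\p\pb$ operator is intrinsic. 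Proposition \ref{S_4_Prop_RI} thus yields a genuinely uniform bound for $\mathscr{R}$ on the full double-weighted space of $(1,1)$-forms, which together with Lemma \ref{S_4_Lemma_Multiplication} provides precisely the trade between a weighted and a scale-invariant factor that closes the contraction loop.
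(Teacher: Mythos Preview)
Your proof is correct and follows essentially the same approach as the paper's: both linearize the Monge--Amp\`ere equation via $\mathscr{R}$, use that $\mathrm{tr}_{\omega_{\phi_{t}'}}\mathscr{R}f=f$, and close a Banach contraction on a ball of radius $\sim t^{-(2-\tau)(p-q)/p-a}$ by combining Lemma~\ref{S_4_Lemma_Multiplication} with the second clause of Proposition~\ref{S_3_Prop_Ricci} and the uniform boundedness of $\II\p\pb P$ from Proposition~\ref{S_4_Prop_RI}. Your version is slightly more explicit in separating the inhomogeneous term $e^{-h_{t}}-1$ from the purely quadratic nonlinearity, whereas the paper bundles both into a single $\mathscr{N}(f)$ and solves $f=-\mathscr{N}(f)$; the content is identical.
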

		\begin{proof}
			Use the multiplication law in Lemma \ref{S_4_Lemma_Multiplication} and combine with Proposition \ref{S_3_Prop_Ricci}: if $\|f\|_{C^{0,\alpha}_{\bs{\delta-2},\tau-2,t}}\lesssim t^{(2-\tau)(q-p)/p-a}$ then $\|f\|_{C^{0,\alpha}_{\bs{0},0,t}}\lesssim \epsilon(t)$. We can therefore define a nonlinear operator.
				\begin{equation*}
					\mathscr{F}:C^{0,\alpha}_{\bs{\delta-2},\tau-2,t}\to C^{0,\alpha}_{\bs{\delta-2},\tau-2,t}, f\mapsto \frac{(\omega_{\phi_{t}'}+\mathscr{R}f)^{n+1}}{\II^{(n+1)^{2}}\Omega_{X_{t}}\wedge \overline{\Omega_{X_{t}}}}-1.
				\end{equation*}
			Recall that $\mathscr{R}f=\II \p \pb P(\Delta_{g_{\phi_{t}'}}P)^{-1}f$. We separate the linear and nonlinear parts of the operator $\mathscr{F}$
				\begin{equation*}
					\mathscr{F}(f)=\Tr(\mathscr{R}f)+\mathscr{N}(f)=f+\mathscr{N}(f).
				\end{equation*}
			We restrict our attention to a small open set:
				\begin{equation*}
					\mathscr{U}:=\{f\in C^{0,\alpha}_{\bs{\delta-2},\tau-2,t},\|f\|<t^{-(2-\tau)(p-q)/p-a}\}.
				\end{equation*}
			For $f_{1},f_{2}\in \mathscr{U}$
				\begin{equation*}
					\|\mathscr{N}(f_{1})-\mathscr{N}(f_{2})\|<C(\|f_{1}\|+\|f_{2}\|)\|f_{1}-f_{2}\|\lesssim \epsilon(t)\|f_{1}-f_{2}\|,
				\end{equation*}
			when $t$ is sufficiently large, $\mathscr{N}$ map $\mathscr{U}$ to itself and we can apply Banach fix point theorem to obtain a unique solution $f=-\mathscr{N}(f), f\in \mathscr{U}$ with estimate as required.
		\end{proof}
		\begin{corollary}\label{S_4_Cor_pGH}
			Our gluing construction provides explicit families of non-collapse complete Calabi-Yau manifolds $(X_{t}, \II \p \pb \phi_{t}, J_{t}, pt_{t})$ that converges in the pointed-Gromov-Hausdorff sense (See Section 2.2 \cite{donaldson2017gromov}) to $(\CC\times V_{0}, \omega_{\CC}+\omega_{V_{0}}, J, pt)$, for a suitable choice of base points $pt_{t}, pt$.
		\end{corollary}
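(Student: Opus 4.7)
The plan is to place base points $pt_t \in X_t$ converging to a smooth point $pt \in \CC \times V_0$ chosen deep inside the asymptotic Region I, and then to use Lemma \ref{S_3_Lemma_Met_1} and Theorem \ref{S_4_Thm_Gluing} in succession to show that, on every fixed metric ball around $pt_t$, the Calabi-Yau metric $g_{\phi_t}$ becomes $C^{0,\alpha}$-close to $g_{\CC \times V_0}$. This uniform $C^0$-closeness on fixed-radius pointed balls is stronger than what pointed-Gromov-Hausdorff convergence requires.

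For the base points, fix $pt = (z_0, \tilde{v}_0) \in \CC \times V_0$ with $z_0 \in \CC \setminus \{s_1,\ldots,s_d\}$ and $\tilde{v}_0 \in V_0 \setminus \{0\}$ a smooth point, chosen so that $|z_0|$ and $R_{\tilde{v}_0}$ exceed the constants $A_1$ from the definition of Region I. For each $t \gg 1$, the implicit function theorem applied to $P(\tilde{z}) + a_t^{q-p} Q_t(z_0) = 0$ near $\tilde{v}_0$ — using that $a_t^{q-p} Q_t(z_0) \to 0$ and $\tilde{v}_0$ is a regular point of $P$ — produces a unique $\tilde{z}_t$ converging to $\tilde{v}_0$; set $pt_t := (\tilde{z}_t, z_0) \in X_t$. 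For $t$ large, $pt_t$ lies in $U_{1,1}'$, and upon choosing $|z_0|, R_{\tilde{v}_0}$ large enough depending on the prescribed radius $R$, one ensures that the balls $B_R(pt_t, g_{\phi_t})$ stay inside $U_{1,1}'$ and avoid the gluing neighborhoods $U_{2,i}'$ of the critical points, since the latter shrink to single points in ambient coordinates while on Region I the metric remains comparable to $g_{\CC \times V_0}$.

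On such balls two estimates are available. Lemma \ref{S_3_Lemma_Met_1} gives $\|g_{\phi_t'} - g_{\CC \times V_0}\|_{C^{0,\alpha}_{0,0}(U_{1,1}')} \to 0$ as the gluing parameters $A_1, \Lambda_1$ are taken large; and Theorem \ref{S_4_Thm_Gluing} together with the scale-invariance statement of Proposition \ref{S_3_Prop_Ricci} gives $\|g_{\phi_t} - g_{\phi_t'}\|_{C^{0,\alpha}_{\bs{0},0,t}} \lesssim \epsilon(t)$, which restricted to the fixed ball $B_R(pt_t)$ becomes an ordinary $C^{0,\alpha}$-bound tending to zero. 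Combining these via the natural map $\Phi_t \colon U_{1,1}' \to \CC \times V_0$, $(\tilde{z}, z_{n+2}) \mapsto (z_{n+2}, G_0(\tilde{z}))$, with $G_0 \colon V_1 \to V_0$ the radial projection of Section \ref{S_2}, one obtains $\|\Phi_t^* g_{\CC \times V_0} - g_{\phi_t}\|_{C^{0,\alpha}(B_R(pt_t))} \to 0$, together with convergence of the complex structures $J_t$, since $\Phi_t$ is approximately biholomorphic.

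Letting $R \to \infty$ along a diagonal subsequence yields the full pGH convergence $(X_t, \II \p \pb \phi_t, J_t, pt_t) \to (\CC \times V_0, \omega_{\CC} + \omega_{V_0}, J, pt)$. The only real obstacle, already present in the proof of Theorem \ref{S_4_Thm_Gluing}, is a bookkeeping one: Lemma \ref{S_3_Lemma_Met_1} becomes uniform only after $A_1, \Lambda_1 \to \infty$, while Theorem \ref{S_4_Thm_Gluing} needs $t$ very large depending on those choices. A standard diagonal extraction in $t$ — precisely the one implicit in the gluing argument itself — resolves this and completes the proof.
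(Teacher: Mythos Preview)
Your overall strategy — compare $g_{\phi_t}$ to $g_{\CC\times V_0}$ via the projection $(z_{n+2},G_0(\tilde z))$ — is the same as the paper's. But there is a genuine gap in how you handle the base point and the radius.

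You write that $pt=(z_0,\tilde v_0)$ is \emph{fixed}, and then that ``upon choosing $|z_0|,R_{\tilde v_0}$ large enough depending on the prescribed radius $R$'' the ball $B_R(pt_t)$ stays inside $U_{1,1}'$. These two statements are incompatible. If $pt$ is fixed, then for $R$ larger than roughly $R_{\tilde v_0}-A_1^{3/4}$ the geodesic ball in the $V_0$-radial direction necessarily exits $\{R_{\tilde z}>A_1^{3/4}\}$; it will meet points near the singular ray $\CC\times\{0\}$ of the limit, which lie in Regions III, IV (and the small-$R$ part of Region I) where Lemma~\ref{S_3_Lemma_Met_1} does not apply. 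Your proposed diagonal extraction does not help: sending $A_1,\Lambda_1\to\infty$ to kill the Lemma~\ref{S_3_Lemma_Met_1} error also pushes the threshold $A_1^{3/4}$ to infinity and forces $pt$ to run off to infinity, so the pointed limit is not well-defined.

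The paper resolves this by letting the comparison region grow with $t$ rather than with the gluing constants: it sets
\[
U_t=\{R_{\tilde z}>\max(A/\log t,\ \Lambda|z_{n+2}|^{q/p}/\log t)\},
\]
fixes $pt$ once and for all in the smooth part of $\CC\times V_0$, and then argues that for any fixed radius $D$ the set $U_{t,D}$ is $\epsilon$-dense in $B(pt_t,D)$ as $t\to\infty$, with the complement collapsing onto the singular locus $\CC\times\{0\}$. That $\epsilon$-density claim — that the portion of the ball outside the good region is Gromov--Hausdorff negligible — is precisely what your argument is missing. To repair your proof you need to replace ``the ball stays in $U_{1,1}'$'' by a density statement of this kind, and for that you cannot avoid letting the threshold shrink with $t$.
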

		\begin{proof}
			We shall use the hypersurface $X_{t}$ given by
				\begin{equation*}
					P(\tilde{z})+a_{t}^{q-p}Q_{t}(z_{n+2})=0.
				\end{equation*}
			Recall that $G_{0}$ is the geodesy projection from $\CC^{n+1}$ to $V_{0}$ under the singular K\"ahler metric $\II \p \pb R^{2}$, we define a map
				\begin{equation*}
					F_{0}: \CC^{n+1}\times \CC\to V_{0}\times C, F_{0}=G_{0}\times Id.
				\end{equation*}
			The restriction of $F_{0}$ on $X_{t}$ defines a diffeomorphism $F_{t}$ from and open set in $X_{t}$
				\begin{equation*}
					U_{t}:=\{(\tilde{z},z_{n+2})\in X_{t}|R_{\tilde{z}}>\max{(A /\log(t), \Lambda |z_{n+2}|^{\frac{q}{p}}/\log(t))}\},
				\end{equation*}
			to an open set in $\CC\times V_{0}$. Pick a base point $pt$ in the smooth part of $\CC\times V_{0}$ and define $pt_{t}$ as $F_{t}^{-1}(pt)$. Let $U_{t,D}(pt_{t})$ be the intersection of $U_{t}$ with a geodesic ball of radius $D$, which centers at $pt_{t}$. It follows from the Theorem \ref{S_4_Thm_Gluing} that, $\forall \epsilon>0$, $U_{t,D}$ is $\epsilon$-dense in the ball and the image of $U_{t,D}$ is $\epsilon$-close to $B(pt, D)$ in $\CC\times V_{0}$. Moreover we can compare the metrics and the complex structure
				\begin{equation*}
					\|F_{t}^{*}g_{\CC\times V_{0}}-g_{\phi_{t}}\|_{C^{k}(g_{\phi_{t}})}+\|F_{t}^{*}J_{\CC\times V_{0}}-J\|_{C^{k}(g_{\phi_{t}})}\leq C(k,D)\epsilon(t).
				\end{equation*}
			Therefore we prove the corollary.
		\end{proof}
		
\section{Bubble tree}\label{S_5}
	In the section we will discuss a bubble tree structure for the collapsing warped QAC Calabi-Yau manifolds construction in Theorem \ref{Thm_2} in a slightly more general setting. For simplicity, we assume the degree for the Calabi-Yau cone $V_{0}$ and any degree of the polynomials involved with respect to $z_{n+2}$ satisfy the numerical constraints in Theorem \ref{Thm_1}.\\
	
	We will begin with constructing a tree structure on the set of $q$ distinct zeros. We label each of zero and denote the set of zeros as $Z$. Moreover let $\mathscr{I}$ be a subset of the power set $\mathscr{P}(Z)$, such that:
		\begin{enumerate}
			\item $\emptyset \notin \mathscr{I}, Z\in \mathscr{I} $.
			\item If $I_{1}, I_{2}\in \mathscr{I}$ and $I_{1}\cap I_{2}\neq \emptyset$, then either $I_{1}\subseteq I_{2}$ or $I_{2} \subseteq I_{1}$.
			\item If $I_{1}, I_{2}\in \mathscr{I}$, $I_{1}\subsetneq I_{2}$ and there doesn't exist $I_{3}\in \mathscr{I}$ such that $I_{1}\subsetneq I_{3}\subsetneq I_{2}$, then there exists $I_{j} \in \mathscr{I}$ such that $I_{2}\setminus I_{1}=\sqcup I_{j}$.
		\end{enumerate}
	Now we define the depth of $I$ to be the maximum of $n$ such that
		\begin{equation}\label{S_5_Eqn_Depth}
			I=I_{n}\subsetneq \cdots \subsetneq I_{0}=Z.
		\end{equation}
	Moreover the depth of $\mathscr{I}$ is defined to be $\max_{I \in \mathscr{I}}(\depp{I})$. An element $I\in \mathscr{I}$ is call minimal if there doesn't exist an $I'\in \mathscr{I}$ such that $I'\subsetneq I$. It is easy to see that $Z=\sqcup_{I \text{ minimal}}I$.\\
	
	\begin{definition}\label{S_3_Defn_Tree}
		A tree structure on $(\mathscr{I}, \subsetneq)$ is defined in the following way
		\begin{enumerate}
			\item $\mathscr{I}$ is the set of vertex.
			\item There is an edge between $I_{1}, I_{2}\subset \mathscr{I}$ if and only if $I_{1} \subsetneq I_{2}$ or $I_{2}\subsetneq I_{1}$ and $|\depp{I_{1}}-\depp{I_{2}}|=1$.
		\end{enumerate}
	\end{definition}
	\begin{remark}\label{S_5_Remark_Valent}
		It follows from the third property for $\mathscr{I}$ that if $I$ is not minimal and $\depp{I}>0$, then there are at least $3$ edges connecting to $I$.
	\end{remark}
	\begin{definition}\label{S_3_Defn_Root_Subtree}
		We will now introduce several key concepts and notation for future argument.
		\begin{enumerate}
			\item \textbf{Root}: we call the unique element of depth $0$ a root. For example, $Z$ is the root of $(\mathscr{I},\subsetneq)$.
			\item \textbf{Subtree}: Let $K\in \mathscr{I}$ there is a induced tree structure on
				\begin{equation*}
					(\{I\subset K| I \in \mathscr{I}\}, \subsetneq).
				\end{equation*}
				We call this tree, the subtree induced by $K$, and $K$ is the root of this subtree.
			\item \textbf{Forest}: Forest is a disjoint union of trees. For example, let $\{K_{j}\}$ be the set of element of depth $k$, the union of subtrees induced by $K_{j}$ is a forest.
			\item \textbf{Path}: For each $K_{0}\subsetneq K_{n}$ we define
				\begin{equation*}
					K_{0}\subsetneq K_{1}\subsetneq \cdots \subsetneq K_{n}, \depp{K_{j+1}}=\depp{K_{j}}+1,
				\end{equation*}
				to be the path from $K_{0}$ to $K_{n}$.
			\item \textbf{Set of labelled points}: To each $I$ we define the set of labeled points $L(I)$: If $I$ is minimal, $L(I):=I$; If $I$ is not minimal then $L(I):=\{I_{j}\}$, where $I=\sqcup I_{j}$ with $\depp{I_{j}}=\depp{I}+1$. We assign $|L(I)|$ distinct points on $\CC$ and label them by $L(I)$
					\begin{equation*}
						b_{\alpha}\in \CC, \alpha\in L(I).
					\end{equation*}
			\item $q_{I}$: We denote $q_{I}$ as the number of zeros contained in $I\in \mathscr{I}$.
			\item \textbf{Colliding parameters}: For every $I\in \mathscr{I}$, we define a positive real parameter $t_{I}$, such that $t_{I_{1}}\gg t_{I_{2}}$ if $I_{2}\subsetneq I_{1}$. We will use $\bs{t}=(t_{I}), I \in \mathscr{I}$ to denote those parameters. Roughly speaking, $t_{I}$ measures the rate of colliding for elements in $L(I)$. For simplicity, we will let $t_{Z}=1$.
		\end{enumerate}
	\end{definition}
	
	Now we give a parametrization of an element in $Z$. Let $\alpha\in L(I)$, where $I$ is minimal and $\depp{I}=n\geq 1$. Let $I\subsetneq K$ and
		\begin{equation*}
			K=I_{k}\subsetneq \cdots \subsetneq I_{n}=I
		\end{equation*}
	be the path from $K$ to $I$. We define
		\begin{equation*}
			s_{\bs{t},K,\alpha}:=t_{K}^{-1}\bigg(t_{I}b_{\alpha}+\sum_{j=k}^{n-1} t_{I_{j}} b_{I_{j+1}}\bigg).
		\end{equation*}
	When $K=Z$ is the root we will simply write $s_{\bs{t},K,\alpha}$ as $s_{\bs{t},\alpha}$. We define a polynomial with zeros labeled by $Z$
		\begin{equation*}
				Q_{\bs{t}}(z_{n+2}):=\prod_{ I \text{ minimal}}\prod_{\alpha\in L(I)}(z_{n+2}-s_{\bs{t},\alpha}).
		\end{equation*}
		
	Similarly to the discussion in Section \ref{S_3}, we define the $K$-blow up model for polynomial $Q_{\bs{t}}$
		\begin{equation*}
			Q_{\bs{t},K}(z_{_{n+2}})=C_{K}\prod_{I \text{ minimal}, I\subset K} \prod_{\alpha\in L(I)}(z_{n+2}-s_{\bs{t},K,\alpha}).
		\end{equation*}
	Here $C_{K}$ is a constant, which is defined inductively:
		\begin{enumerate}
			\item $C_{Z}=1$,
			\item If $K\subsetneq K'$ and $\depp{K}=\depp{K'}+1$ and $K'\setminus K =\sqcup_{j=1}^{k} K_{j}$, then
				\begin{equation*}
					C_{K}=C_{K'}\prod_{j=1}^{k}(b_{K}-b_{K_{j}})^{q_{K_{j}}}, K_{j}\in L(K').
				\end{equation*}
		\end{enumerate}
		
	Moreover define a hyper-surface and its $K$-blow up models in $\CC^{n+2}$ accordingly
		\begin{equation*}
			\begin{split}
				&X_{\bs{t}}: P(\tilde{z})+Q_{\bs{t}}(z_{n+2})=0,\\
				&X_{\bs{t},K}:P(\tilde{z})+Q_{\bs{t},K}(z_{n+2})=0.
			\end{split}
		\end{equation*}
	Given a minimal $I$ and $K\in\mathscr{I}, I \subset K$, we also define marked point in $X_{\bs{t},K}$ as follow
		\begin{equation*}
			pt_{\bs{t}, K,\alpha}=(0,s_{\bs{t},K,\alpha}),\alpha \in L(I).
		\end{equation*}
	It can be checked directly that when $K$ is minimal $(X_{\bs{t},K}, pt_{\bs{t},K,\alpha})$ is independent on $\bs{t}$. In this case we will simply write it as $(X_{K}, pt_{K,\alpha})$.\\
	 
	We apply Theorem \ref{Thm_1} to obtain a Calabi-Yau K\"ahler form $\II \p \pb \phi_{\bs{t},K, u_{K}}$ on $X_{\bs{t},K}$ approximated by
		\begin{equation*}
			\II \p \pb \phi_{\bs{t},K, u_{K}}'(z)=\II \p \pb \bigg(|z_{n+2}|^{2}+u_{K}^{-2}|P(\tilde{z})|^{2/p}\varphi(P(\tilde{z})^{-1/p}\cdot \tilde{z})\bigg), u_{K}>0.
		\end{equation*}
	Here $u_{K}$ is a collapsing parameter. Our gluing theorem \ref{Thm_2} corresponds to the case when the depth of the tree is $1$. Moreover let $I$ be a minimal element, the parameter
			\begin{equation*}
				t_{Z}=1,t_{I}=t^{-\frac{p-q}{p-q_{I}}}, u_{Z}=t^{\frac{p-q}{p}}, u_{I}=1.
			\end{equation*}
		In this case, when $t$ is large enough, the gluing theorem implies that the Calabi-Yau metric $\II t_{I}^{-2}\p \pb \phi_{\bs{t},Z,u_{Z}}$ on a small neighborhood of $pt_{Z,\alpha}$, $\alpha\in L(I)$ can be approximated by $\II \p \pb \phi_{I,u_{I}}$ on $X_{I}$. Our task for the next subsection is to use this gluing theorem as a stepping stone to give a preferable description for the bubble tree convergence.
	
\subsection{Convergence for bubble tree}\label{S_5_1}
	We shall apply a similar heuristic argument in Section \ref{S_3}. Let $K_{2}\subsetneq K_{1}$, $\depp{K_{2}}=\depp{K_{1}}+1$. After the coordinate change
		\begin{equation*}
			((t_{K}^{-1}t_{I})^{q_{I}/p}\cdot \tilde{v},t_{K}^{-1}t_{I}v_{n+2}+b_{\alpha_{I}})=(\tilde{z},z_{n+2}),
		\end{equation*}
	$(X_{\bs{t},K_{1}},\II \p \pb \phi_{\bs{t},K_{1},u_{K_{1}}}')$ is approximated by
		\begin{equation*}
			(X_{\bs{t},K_{2}}, (t_{K_{1}}^{-1}t_{K_{2}})^{2}\II \p \pb \phi_{\bs{t},K_{2},u_{K_{2}}}),
		\end{equation*}
	where
		\begin{equation*}
			u_{K_{2}}=u_{K_{1}}(t_{K_{1}}^{-1}t_{K_{2}})^{(p-q_{K_{2}})/p}.
		\end{equation*}
	We require that when $I$ is minimal $u_{I}$ is a fixed constant. 
		\begin{lemma}\label{S_5_Lemma_compatible}
			For every choice of $\{u_{I}\}$, where $I$ is minimal, there exists choices of $\{t_{K'}\}, K'\in \mathscr{I}$, such that for every non-minimal $K\in \mathscr{I}$ of depth $k$, the collapsing parameter $u_{K}=u_{k}$ only depends on its depth, and that for any minimal $I$ contained in $K$, $u_{K}$ satisfies
				\begin{equation*}
					u_{K}=u_{I}\prod_{j=k+1}^{n}(t_{I_{j}}^{-1}t_{I_{j-1}})^{(p-q_{I_{j}})/p}.
				\end{equation*}
			Here $I=I_{n}\subsetneq \cdots \subsetneq I_{k}=K$ is the path from $K$ to $I$. Moreover, $t_{I_{j}}^{-1}t_{I_{j-1}}$ can be arbitrary large.
		\end{lemma}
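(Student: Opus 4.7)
The plan is to build the $t_K$'s top-down by induction on depth, guided by a rapidly decreasing hierarchy $u_0 \gg u_1 \gg \cdots \gg u_{n_{\max}}$ of free positive reals, chosen so that each $u_k$ also dominates every prescribed $u_I$ with $I$ minimal of depth $>k$. I set $t_Z = 1$, declare $u_K := u_{\depp{K}}$ for every non-minimal $K$, and keep $u_I$ at its prescribed value when $I$ is minimal. For each edge $K \supsetneq K'$ with $\depp{K'} = \depp{K} + 1$, define
\begin{equation*}
    t_{K'} := t_K \cdot \bigl(u_{K'}/u_K\bigr)^{p/(p-q_{K'})}.
\end{equation*}
No compatibility issue arises here because the underlying graph is a tree. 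Since $p > q_{K'}$ and $u_{K'}/u_K$ can be made as small as we like — either by taking $u_{k+1}$ far below $u_k$ when $K'$ is non-minimal, or by ensuring $u_k$ far exceeds the prescribed $u_{K'}$ when $K'$ is minimal — every ratio $t_{K'}^{-1} t_K$ can be arranged to be arbitrarily large.

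Next I verify the path identity by telescoping. The one-step recursion above is manifestly equivalent to
\begin{equation*}
    u_K = u_{K'} \cdot \bigl(t_{K'}^{-1} t_K\bigr)^{(p-q_{K'})/p}.
\end{equation*}
Chaining this relation along a path $I = I_n \subsetneq I_{n-1} \subsetneq \cdots \subsetneq I_k = K$ yields at once
\begin{equation*}
    u_K = u_I \prod_{j=k+1}^{n} \bigl(t_{I_j}^{-1} t_{I_{j-1}}\bigr)^{(p-q_{I_j})/p},
\end{equation*}
which is the asserted formula.

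The only delicate point — a possible over-determination — is that $u_K$ should depend only on $\depp{K}$ and not on the minimal descendant $I \subset K$ used to compute the path product. This is secured by a simple combinatorial observation: for every intermediate index $k < j < n$, the element $I_j$ is automatically non-minimal. Indeed $I \subsetneq I_j$ together with minimality of $I$ forces $I_j$ not to be minimal (otherwise $I = I_j$, a contradiction). Hence each intermediate $u_{I_j}$ is fixed by our construction to equal $u_j$, and two distinct minimal descendants $I, I' \subset K$ produce path products that telescope to the same value $u_{\depp{K}}$.

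The main obstacle I anticipate is largely organizational: one must pick the $u_k$-hierarchy before defining the $t_K$'s, and do so generously enough that it simultaneously dominates every prescribed minimal value $u_I$ appearing strictly below depth $k$. Once this is arranged, the top-down recursion has no feedback loops, and the telescoping identity together with the non-minimality of intermediate path elements delivers the lemma.
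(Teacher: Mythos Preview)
Your proof is correct and follows essentially the same approach as the paper's. Both arguments exploit the tree structure to assign edge weights $x_{K',K} = (t_{K'}^{-1}t_K)^{(p-q_{K'})/p} = u_K/u_{K'}$ and use the freedom in the depth-only values $u_k$ to make these ratios arbitrarily large; the only organizational difference is that you fix the hierarchy $u_0 \gg u_1 \gg \cdots$ up front and then build the $t_K$'s top-down from the root, whereas the paper proceeds bottom-up, starting from non-minimal elements of maximal depth and inductively choosing each $u_k$ large relative to the already-fixed $u_{K'}$ for $K' \in L(K)$.
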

		\begin{proof}
			For simplicity, let
				\begin{equation*}
					x_{I_{j},I_{j-1}}=(t_{I_{j}}^{-1}t_{I_{j-1}})^{(p-q_{I_{j}})/p}.
				\end{equation*}
			There is an one-to-one correspondence between $x_{I_{j},I_{j-1}}$ and the set of edges of $(\mathscr{I},\subsetneq)$. We prove this lemma by induction. Suppose the depth of $\mathscr{I}$ is $n$. We begin with the forest consisting of the subtrees induced by non-minimal $K\in \mathscr{I}, \depp{K}=n-1$, the conclusion follows directly, by letting $x_{I,K}=u_{n-1}u_{I}^{-1}$ for some arbitrary large $u_{n-1}$. Now suppose the statement is true for $\depp{K'}\geq k+1$, consider the forest induced by non-minimal elements of depth $k$. Pick an arbitrary root $K$ of a tree in the forest. We shall choose $u_{k}$ sufficiently large so that $\forall K'\in L(K)$
				\begin{equation*}
					x_{K',K}=u_{k}u_{K'}^{-1},
				\end{equation*}
				is sufficiently large. Therefore the statement is also true for any tree of depth $n$.
		\end{proof}
		
		\begin{proposition}\label{S_5_Prop_Bubble_Convergence}
			Given a tree $(\mathscr{I},\subsetneq)$ with $Z$ being its root, such that
				\begin{enumerate}
					\item $q_{Z}<p$, where $p$ is the degree of the Calabi-Yau cone.
					\item For any non-minimal $K \in \mathscr{I}$, let $K_{j} \in L(K),1\leq j\leq d$, $\{q_{K_{j}}\}$ satisfies the numerical constraint in Theorem \ref{Thm_2}.
					\item For every minimal $I \in \mathscr{I}$, a collapsing parameter $u_{I}$.
				\end{enumerate}
			Then there exists a sequence of warped QAC Calabi-Yau manifolds
				\begin{equation*}
					(X_{\bs{t}_{l},Z}, \II \p \pb \phi_{Z, u_{Z,l}}).
				\end{equation*}
			For every minimal $I$, we associated it with a sequence of marked points
				\begin{equation*}
					pt_{\bs{t}_{l},\alpha}=(0,s_{\bs{t}_{l},\alpha})\in X_{\bs{t}_{l},Z}, \alpha\in L(I),
				\end{equation*}
			such that
				\begin{equation*}
					(X_{\bs{t}_{l},Z}, pt_{\bs{t}_{l},\alpha}, \II t_{I,l}^{-2}\p \pb \phi_{Z, u_{Z,l}},J)
				\end{equation*}
			is pointed-Gromov-Hausdorff convergent to
				\begin{equation*}
					(X_{I}, pt_{\alpha}, \II \p \pb \phi_{I, u_{I}},J),
				\end{equation*}
			where $pt_{\alpha}=(0,s_{\alpha}), \alpha\in L(I)$.
		\end{proposition}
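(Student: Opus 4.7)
The plan is to iterate the gluing theorem (Theorem \ref{Thm_2}, or more precisely its detailed form Theorem \ref{S_4_Thm_Gluing}) along the path in $(\mathscr{I},\subsetneq)$ from the root $Z$ down to the minimal element $I$ containing $\alpha$. By Lemma \ref{S_5_Lemma_compatible}, given the prescribed $u_{I}$ at every minimal leaf, one can choose sequences $\{t_{K,l}\}_{K\in \mathscr{I}}$ (with $t_{Z,l}=1$) so that $u_{K,l}$ depends only on $\depp{K}=k$ and so that along a subsequence $l\to\infty$ every edge ratio $t_{K_{1},l}^{-1}t_{K_{2},l}$ (for $K_{2}\subsetneq K_{1}$ at consecutive depths) tends to $0$ as slowly as needed, while $u_{Z,l}\to \infty$. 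By hypothesis (2), at every non-minimal $K\in \mathscr{I}$ the set $\{q_{K'}\}_{K'\in L(K)}$ satisfies the numerical constraints of Theorem \ref{Thm_2}.

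The inductive step is the following: for a non-minimal $K_{1}\in\mathscr{I}$ with child $K_{2}\in L(K_{1})$, Theorem \ref{Thm_2} applied to the warped QAC Calabi-Yau manifold $(X_{\bs{t}_{l},K_{1}},\II \p \pb \phi_{\bs{t}_{l},K_{1},u_{K_{1},l}})$ supplies a biholomorphism
\begin{equation*}
F_{K_{1}\to K_{2},l}:V_{K_{2},l}\subset X_{\bs{t}_{l},K_{1}}\to X_{\bs{t}_{l},K_{2}},
\end{equation*}
whose image exhausts $X_{\bs{t}_{l},K_{2}}$ as $l\to \infty$, such that
\begin{equation*}
(F_{K_{1}\to K_{2},l}^{-1})^{*}\bigl((t_{K_{1},l}^{-1}t_{K_{2},l})^{-2}\II \p \pb \phi_{\bs{t}_{l},K_{1},u_{K_{1},l}}\bigr)\longrightarrow \II \p \pb \phi_{\bs{t}_{l},K_{2},u_{K_{2},l}}
\end{equation*}
in the weighted H\"older norm of Theorem \ref{S_4_Thm_Gluing}, hence smoothly on compact subsets of $X_{\bs{t}_{l},K_{2}}$. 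Here the identity $u_{K_{2}}=u_{K_{1}}(t_{K_{1}}^{-1}t_{K_{2}})^{(p-q_{K_{2}})/p}$ from Lemma \ref{S_5_Lemma_compatible} is exactly what makes the blow-up target's approximate solution equal $\II \p \pb \phi_{\bs{t}_{l},K_{2},u_{K_{2},l}}'$, and Theorem \ref{Thm_1_Detail} provides the required closeness of this approximate potential to the true Calabi-Yau potential at the new scale.

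Applying this step repeatedly along $Z=I_{0}\subsetneq I_{1}\subsetneq \cdots \subsetneq I_{n}=I$ and composing, set
\begin{equation*}
G_{\alpha,l}:=F_{Z\to I_{1},l}^{-1}\circ F_{I_{1}\to I_{2},l}^{-1}\circ \cdots \circ F_{I_{n-1}\to I_{n},l}^{-1}:W_{\alpha,l}\subset X_{I}\to X_{\bs{t}_{l},Z},
\end{equation*}
where $W_{\alpha,l}$ is an exhausting family of open neighborhoods of $pt_{\alpha}$. A telescoping of the edgewise rescaling factors yields
\begin{equation*}
\prod_{j=1}^{n}(t_{I_{j-1},l}^{-1}t_{I_{j},l})^{-2}=t_{I,l}^{-2},
\end{equation*}
so composing the edgewise convergences gives $G_{\alpha,l}^{*}\bigl(t_{I,l}^{-2}\II \p \pb \phi_{Z,u_{Z,l}}\bigr)\to \II \p \pb \phi_{I,u_{I}}$ smoothly on compact subsets of $X_{I}$, with $G_{\alpha,l}(pt_{\alpha})=pt_{\bs{t}_{l},\alpha}$ by construction. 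Smooth convergence of complete Riemannian metrics (with fixed complex structure) on exhausting domains implies pointed-Gromov-Hausdorff convergence on every fixed geodesic ball, exactly as in Corollary \ref{S_4_Cor_pGH}, which yields the stated convergence.

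The main obstacle is the uniform bookkeeping along the iteration: at every depth $k$ one must verify that the inner-scale approximate solution satisfies the hypotheses of Theorem \ref{Thm_2} uniformly in $l$. Concretely, at each such $K$ the collapsing factor $u_{K,l}$ tends to infinity while the edge ratios to the children at depth $k+1$ tend to $0$, both at rates prescribed by Lemma \ref{S_5_Lemma_compatible}. Since hypothesis (2) forces each $L(K)$ to satisfy the numerical constraint of Theorem \ref{Thm_2}, the gluing theorem applies at every non-minimal $K$ for all sufficiently large $l$; choosing $l$ large by a diagonal argument in the finitely many intermediate levels then closes the induction and gives the desired exhausting biholomorphism $G_{\alpha,l}$.
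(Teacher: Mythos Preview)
Your proposal is correct and follows essentially the same strategy as the paper: iterate the gluing theorem (Theorem \ref{Thm_2}/\ref{S_4_Thm_Gluing}) along each edge of the tree, compose the resulting biholomorphisms along the path from the root to the minimal leaf, telescope the rescaling factors, and conclude pointed Gromov--Hausdorff convergence. The paper organizes the induction slightly differently---starting from the deepest non-minimal level and working upward while explicitly tracking the accumulated error as $\frac{n-k}{n}l^{-\epsilon}$ at depth $k$---whereas you frame it as an edgewise composition closed off by a diagonal choice of $l$; but the content is the same.

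Two small points. First, a notational slip: you write the path as $Z=I_{0}\subsetneq I_{1}\subsetneq \cdots \subsetneq I_{n}=I$, but since $Z$ is the root and $I$ is minimal the inclusions go the other way, $I=I_{n}\subsetneq\cdots\subsetneq I_{0}=Z$; the maps $F_{I_{j-1}\to I_{j},l}$ and their composition are clearly intended correctly. Second, when you invoke Theorem \ref{Thm_2} at a non-minimal $K_{1}$, the blow-up targets are the $l$-dependent manifolds $X_{\bs{t}_{l},K_{2}}$ rather than fixed $X_{i}$; this is fine because the gluing construction in Section \ref{S_3} only requires the glued-in models to be warped QAC Calabi-Yau with the correct asymptotics (which they are, uniformly in $l$ by the inductive hypothesis), but it is worth saying explicitly that Theorem \ref{S_4_Thm_Gluing} is being applied with these varying models in Region II.
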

		\begin{proof}
			Suppose the depth of $\mathscr{I}$ is $n$, let $K$ be an arbitrary non-minimal element of depth $n-1$. For every $I\in L(K)$, we choose
				\begin{equation*}
					pt_{\bs{t}_{l}, K,\alpha}=(0,s_{\bs{t}_{l},K,\alpha}),\alpha\in L(I)
				\end{equation*}
			and $u_{n-1}=u_{K}$ large enough such that
				\begin{equation}\label{S_5_Eqn_Estimate}
					\|(t_{I}^{-1}t_{K})^{2}(F^{-1}_{I,K,\bs{t}_{l}})^{*}\big(\II \p \pb \phi_{\bs{t}_{l},K,u_{K,l}}\big)-\II \p \pb \phi_{I,u_{I}}\|_{C^{0,\alpha}_{0,0}(B_{l}, g_{\phi_{I},u_{I}})}<\frac{1}{n}l^{-\epsilon}.
				\end{equation}
			Here $F_{I,K,\bs{t}_{l}}$ is a biholomorphism from a large open set in $X_{I}$ to a small neighborhood of $pt_{\bs{t}_{l},\alpha,K}$ in $X_{\bs{t}_{l},K}$, which is similar to the one constructed in Section \ref{S_3}. The ball $B_{l}$ centers at $pt_{\bs{t}_{l},\alpha,K}$ and its radius is $\log(l)$. We can iterate this process inductively, now let $K$ be an arbitary non-minimal element of depth $k$, we choose $u_{k}=u_{K}$ large enough so that for every minimal $I\subsetneq K$
				\begin{enumerate}
					\item If $\depp{I}=k+1$, then the same estimate in Equation \ref{S_5_Eqn_Estimate} holds.
					\item If $\depp{I}>k+1$, let $I\subsetneq K'\subsetneq K$ with $\depp{K'}=\depp{K}+1$, and the pull-back scaled Calabi-Yau metric near $pt_{\bs{t}_{l},\alpha,K}$ is sufficiently closed to $(X_{\bs{t}_{l},K'}, \II \p \pb \phi_{\bs{t}_{l},K',u_{K',l}})$ on a very large open set, such that
						\begin{equation*}
							\|(t_{I}^{-1}t_{K,l})^{2}(F^{-1}_{I,K,\bs{t}_{l}})^{*}\big(\II \p \pb \phi_{\bs{t}_{l},K,u_{K,l}}\big)-\II \p \pb \phi_{I,u_{I}}\|_{C^{0,\alpha}_{0,0}(B_{l}, g_{\phi_{I},u_{I,}})}<\frac{n-k}{n}l^{-\epsilon}.
						\end{equation*}
				\end{enumerate}
			Use induction, we conclude that when $l$ is large enough, for each minimal $I$, there exists a biholomorphism $F_{I,\bs{t}_{l}}$ from a small neighborhood of $pt_{I,l}$ in $X_{\bs{t}_{l},Z}$ to a ball $B(pt_{\alpha},\log{l})$ in $X_{I}$. And the corresponding Calabi-Yau K\"ahler forms satisfies the estimate
				\begin{equation*}
					\|t_{I}^{-2}(F^{-1}_{I,\bs{t}_{l}})^{*}\big(\II \p \pb \phi_{Z,u_{Z,l}}\big)-\II \p \pb \phi_{I,u_{I}}\|_{C^{0,\alpha}_{0,0}(B_{l}, g_{\phi_{I},u_{I}})}<l^{-\epsilon}.
				\end{equation*}
			It should be remarked that we let $t_{Z}=1$ for simplicity. We can again use Monge-Amp\`ere equation to improve the regularity. It follows from definition that this pointed sequence of Calabi-Yau manifolds is pointed-Gromov-Hausdorff convergent to $(X_{I},pt_{\alpha}, \II \p \pb \phi_{I,u_{I}},J)$ in the sense of \cite{donaldson2017gromov}.
		\end{proof}

\section*{Acknowledgement}	
	The author is grateful to his Ph.D. advisor Claude LeBrun and Simon Donaldson for their suggestions, discussion and encouragement. He would also like to thanks Charlie Cifarelli, Jiaji Cai, Ronan Conlon, Yichen Cheng, Hanbing Fang, Shuo Gao, Siqi He, Yang Li, Junbang Liu, Fr\'ed\'eric Rochon, Jian Wang and Junsheng Zhang for fruitful discussion and helpful comments.\\
	
	This paper is partially supported by Simons Foundation International, LTD. Most of the work was completed during the author's stay in Morningside Center. 

\nocite{*}

\bibliographystyle{alpha}

\end{document}